\documentclass[11pt,a4paper]{amsart} 
\setlength{\textwidth}{\paperwidth}
\addtolength{\textwidth}{-2in}
\calclayout

\usepackage{tikz-cd}
\usepackage{graphicx}

\usepackage{pgfplots}
\usepackage{mathrsfs}
\usepackage{microtype}

\usepackage{pgfplots}
\pgfplotsset{compat=1.15}
\usepackage{tikz}
\usetikzlibrary{arrows}

\usepackage{stmaryrd}
\usepackage{slashed}
\usepackage{tikz-cd}
\usetikzlibrary{positioning}
\usetikzlibrary{calc,intersections}
\usepackage{hyperref}
\usepackage{todonotes}

\usepackage[all,cmtip]{xy}

\usepackage[justification=centering]{caption}

\usepackage[english]{babel}

\usepackage{csquotes}
\usepackage{amssymb}
\usepackage{amsmath}
\usepackage{amsthm}
\usepackage{xcolor}
\newtheorem{thm}{Theorem}[section]
\newtheorem{lem}[thm]{Lemma}
\newtheorem{prop}[thm]{Proposition}
\newtheorem{coro}[thm]{Corollary}

\theoremstyle{remark}
\newtheorem{rema}[thm]{Remark}
\newtheorem{exa}[thm]{Example}
\newtheorem{defi}[thm]{Definition}

\DeclareMathOperator{\gr}{Gr}
\DeclareMathOperator{\sym}{Sym}

\DeclareMathOperator{\Lie}{Lie}
\DeclareMathOperator\pr{pr}
\DeclareMathOperator{\der}{Der}

\newcommand{\aaa}{\rotatebox[origin=r]{-90}{\scalebox{0.52}{$\sim$}}}

\DeclareFontFamily{U} {MnSymbolA}{}
\DeclareFontShape{U}{MnSymbolA}{m}{n}{
  <-6> MnSymbolA5
  <6-7> MnSymbolA6
  <7-8> MnSymbolA7
  <8-9> MnSymbolA8
  <9-10> MnSymbolA9
  <10-12> MnSymbolA10
  <12-> MnSymbolA12}{}
\DeclareFontShape{U}{MnSymbolA}{b}{n}{
  <-6> MnSymbolA-Bold5
  <6-7> MnSymbolA-Bold6
  <7-8> MnSymbolA-Bold7
  <8-9> MnSymbolA-Bold8
  <9-10> MnSymbolA-Bold9
  <10-12> MnSymbolA-Bold10
  <12-> MnSymbolA-Bold12}{}

\newcommand{\cA}{\mathcal{A}}
\newcommand{\cB}{\mathcal{B}}

\newcommand{\cD}{\mathcal{D}}

\newcommand{\cG}{\mathcal{G}}
\newcommand{\cH}{\mathcal{H}}

\newcommand{\cK}{\mathcal{K}}
\newcommand{\cL}{\mathcal{L}}
\newcommand{\cM}{\mathcal{M}}
\newcommand{\cN}{\mathcal{N}}

\newcommand{\cV}{\mathcal{V}}

\newcommand{\fg}{\mathfrak{g}}
\newcommand{\fh}{\mathfrak{h}}

\newcommand{\fX}{\mathfrak{X}}

\newcommand{\qq}{q}

\newcommand{\gs}{\mathtt{s}}
\newcommand{\gt}{\mathtt{t}}
\newcommand{\gu}{\mathtt{u}}
\newcommand{\gi}{\mathtt{i}}
\newcommand{\gm}{\mathtt{m}}

\newcommand{\man}[1]   {#1\text{-}\mathcal{M}an}

\newcommand{\bN}{\mathbb{N}}
\newcommand{\bR}{\mathbb{R}}
\newcommand{\bZ}{\mathbb{Z}}

\newcommand{\pair}[2]{\langle #1, #2 \rangle}
\newcommand{\cbrack}[2]{\llbracket #1, #2 \rrbracket}

\DeclareSymbolFont{MnSyA} {U} {MnSymbolA}{m}{n}
\DeclareMathSymbol{\lcirclearrowright}{\mathrel}{MnSyA}{252}
\DeclareMathSymbol{\rcirclearrowleft}{\mathrel}{MnSyA}{250}
\DeclareMathOperator{\id}{Id}
\usepackage{stmaryrd}
\usepackage{slashed}
\usepackage{tikz}
\usetikzlibrary{positioning}
\usetikzlibrary{calc,intersections}
\usetikzlibrary{matrix,arrows}
\usepackage{todonotes}


\let\oldtocsection=\tocsection

\let\oldtocsubsection=\tocsubsection

\let\oldtocsubsubsection=\tocsubsubsection

\renewcommand{\tocsection}[2]{\bf\hspace{0em}\oldtocsection{#1}{#2}}
\renewcommand{\tocsubsection}[2]{\hspace{1em}\oldtocsubsection{#1}{#2}}
\renewcommand{\tocsubsubsection}[2]{\hspace{2em}\oldtocsubsubsection{#1}{#2}}

\setcounter{tocdepth}{3}
\setcounter{secnumdepth}{3}


\title{Homological vector fields over differentiable stacks}
\author{Daniel \'Alvarez}
\address{Departament of Mathematics, University of Toronto, 40 St. George St., Toronto, Ontario, M5S 2E4}
\email{dalv@math.toronto.edu}

\author{Miquel Cueca}
\address{Mathematics Institute\\Georg-August-University of G\"ottingen\\Bunsenstra{\ss}e 3-5\\G\"ottingen 37073\\Germany}
\email{miquel.cuecaten@mathematik.uni-goettingen.de}

\keywords{Differentiable stacks, Homological vector fields, Lie algebroids} 
\subjclass{18F20, 58A50, 53D17, 58H05}

\date{} 

\begin{document}
\begin{abstract} In this work we solve the problem of providing a Morita invariant definition of Lie and Courant algebroids over Lie groupoids. By relying on supergeometry, we view these structures as instances of vector fields on graded groupoids which are homological up to homotopy. We describe such vector fields in general from two complementary viewpoints: firstly, as Maurer-Cartan elements in a differential graded Lie algebra of multivector fields and, secondly, we also view them from a categorical approach, in terms of functors and natural transformations. Thereby, we obtain a unifying conceptual framework for studying LA-groupoids, $L_2$-algebroids (including semistrict Lie 2-algebras and 2-term representations up to homotopy), infinitesimal gerbe prequantizations, higher gauge theory (specifically, 2-connections on 2-bundles), quasi-Poisson groupoids and (twisted) multiplicative Courant algebroids.
\end{abstract}

\maketitle
\tableofcontents
 
\section{Introduction} It is usually desirable in both mathematics and physics to describe geometric structures in a flexible enough manner that they can be transported under certain change of frame of reference operations. More specifically, whenever we want to define a geometric structure over a differentiable stack, understood as a class of {\em weakly equivalent} Lie groupoids \cite{difger}, we have to make sure it is such that it can be transported under Morita equivalences. It has been realized that the appropriate way to do so is by providing definitions that rely on homotopies or natural isomorphisms instead of equalities. In this context, the program of {\em categorifying} several fundamental concepts is particularly relevant: see \cite{grp,bacr,higyanmil,higauthe,bone:1shp,xu:mom} 
for a discussion of the ideas of Lie group, Lie algebra, principal bundle, parallel transport and Poisson and symplectic manifolds in this spirit.

 For Lie algebras, in particular, two generalizations naturally appeared: (1) one can define skew-symmetric brackets that satisfy the Jacobi identity {\em up to chain homotopy} in a complex of vector spaces, this is the idea behind $L_\infty$-algebras; (2) on the other hand, we can think of a categorified Lie algebra as a category itself which is endowed with a Lie bracket that is a functor and that satisfies the Jacobi identity {\em up to a natural isomorphism}. Both approaches were illustrated and contrasted in \cite{bacr}.
 
In this work we further develop this program by examining the idea of {\em Lie algebroid}.  Defining Lie algebroids up to homotopy as $L_2$-algebroids (or split degree 2 $Q$-manifolds) is straightforward, see for instance \cite{bon:on, sev:some}. On the other hand, a partial categorification of Lie algebroids was achieved by the introduction of LA-groupoids \cite{macdou}; however, the properties of such objects are still characterized by equalities instead of natural isomorphisms and so they are not Morita invariant. Notwithstanding this fact, there have been some attempts to use LA-groupoids to define Lie algebroids over differentiable stacks \cite{rog:lie, wal} (see also \cite{ be:alg} for a study of Lie algebroids and stacks in the algebraic category). 

The goal of this work is to unify the study of those two classes of objects in a way that sheds light on many concrete examples that have appeared in the literature. In doing so we aim to give a framework which is concrete enough to perform explicit computations and yet is sufficiently general to explain and put into a suitable context a number of results and constructions about homotopic or categorified notions in classical differential geometry. Our perspective encompasses and clarifies diverse notions of Lie bialgebroids \cite{cc:bia,bur:dou, xu:2bia,kra:bia}, the emergence of Lie 2-algebra structures on the spaces of multiplicative sections of LA-groupoids and quasi Poisson groupoids \cite{l2algqpoi,strhom, ortwal}, the definition of a flat 2-connection on a 2-bundle in higher gauge theory \cite{higyanmil, higauthe, difgeoger}, and Courant algebroids over groupoids \cite{tracou, raj:qgr,muldir} (possibly twisted as in \cite{stro:4form}). 

Our interpretation of $L_2$-algebroids in categorical terms 
sheds light on the integration of 2-term representations up to homotopy via 2-functors \cite{int2ter} (the  version of Lie's second theorem relevant for this interpretation is discussed in \cite{cc:lie2}). 
We plan to examine these global aspects of the theory further in future work.

\subsection{Summary} Due to the length of this work, we include here a brief description of our contributions. Over the last few decades the language of supergeometry has led to the unified study of many different geometric structures such as Poisson manifolds \cite{sch:geoBV}, Lie algebroids \cite{vai:lie}, Lie bialgebroids and Courant algebroids \cite{roy:on}; all these objects can be viewed as {\em $Q$-manifolds}, i.e. $\bN$-graded manifolds endowed with a vector field $Q$ that squares to zero; in this situation, $Q$ is called a {\em $Q$-structure or a homological vector field.} 

It has long been realized that Lie groupoids are very convenient models for differentiable stacks, see  \cite{difger} and compare it with the original algebraic approach of \cite{gro:SG4}. In the light of this, we can define a \emph{graded differentiable stack}, i.e. a smooth stack in the category of $\mathbb{N}$-graded manifolds, as the Morita equivalence class of a graded Lie groupoid as in \cite{raj:tes, raj:qgr}. 
So the first question that we address is the following 
\begin{align}
    \text{What is a homological vector field on a graded differentiable stack?}\tag{Q1}\label{q1}
\end{align}

We give an answer to this question in \S \ref{sec:qqgpd} by recalling the general definition of graded groupoids as in \cite{raj:tes, raj:qgr} and introducing degree 1 multiplicative vector fields $Q$ over them that are homological {\em up to homotopy} in the sense that $Q^2$ is equal to a simplicially exact term. This is in contrast with the $Q$-groupoids introduced in \cite{raj:tes, raj:qgr} where $Q^2$ is required to be equal to 0. The resulting condition immediately characterizes $Q$ as part of a Maurer-Cartan element in the dgla of multiplicative multivector fields on a groupoid (see $\S\ref{sec:MC}$) and hence we get that it is a structure that can be transported along Morita equivalences, providing thus the appropriate concept of $Q$-structure over a differentiable stack. We call these objects {\em quasi $Q$-groupoids} (Definition \ref{def:qqgpd}). 

We also characterize the above Maurer-Cartan elements as natural transformations in \S \ref{subsec:qqnat}. This equivalent viewpoint allows us to view these objects as genuine categorifications of $Q$-manifolds, in the same way that Lie 2-algebras are a categorification of Lie algebras \cite{bacr}. This categorical viewpoint naturally leads to our definition of morphism between quasi $Q$-groupoids in \S \ref{sec:mor}. Let us just point out that this concept extends the idea of gauge transformation between Maurer-Cartan elements and includes morphisms of semistrict Lie 2-algebras \cite{bacr} as particular examples. 

Poisson structures have been frequently studied in the presence of a homological vector field, most famously under the guise of {\em $QP$-manifolds} \cite{aksz}. So another closely related question to \eqref{q1} is the following:
\begin{align}
    \text{What is a $QP$ manifold structure on a graded differentiable stack?}\label{q2}\tag{Q2}
\end{align}
We give an answer to this question in Definition \ref{def:deg2sym}, drawing inspiration from homotopy and shifted Poisson structures \cite{bone:1shp, raj:hop, pridh:hop}. We pay particular emphasis to non degenerate Poisson structures as that situation is what allows us to consider some of the most interesting examples as we shall see below.

In the rest of the paper we specialize our answers to \eqref{q1} and \eqref{q2} in many situations. Since Lie algebroid structures are equivalent to degree 1 $Q$-manifolds \cite{vai:lie}, for graded groupoids of degree 1, \eqref{q1} becomes the following question: \begin{align}
    \text{What is a Lie algebroid over a differentiable stack?}\label{q3}\tag{Q3}
\end{align}
Our answer to this question in \S \ref{sec:1q-groupoids} specializes, on one hand, to LA-groupoids \cite{macdou} viewed as $Q$-groupoids \cite{raj:tes} and, on the other, to $L_2$-algebroids, see Theorem \ref{l2algqla}. We refer to the categorified Lie algebroids thus obtained as {\em quasi LA-groupoids}. Our viewpoint allows us to consider many examples: in fact, we show that quasi LA-groupoids  appear as central extensions determined by closed 1-shifted 2-forms and quasi Poisson groupoids structures, see \S \ref{subsec:cenext}. Furthermore, we explain how 2-term representations up to homotopy \S \ref{subsec:ruth} and flat 2-connections are examples of quasi LA-groupoid morphisms \S \ref{sec:ati}. 

Lie algebroid structures are in duality with respect to linear Poisson structures. In \S \ref{sec:qPVB} we promote that relationship to a duality between our quasi LA-groupoids and linear quasi Poisson structures \cite{quapoi}. Theorem \ref{thm:main} generalizes the main result of \cite{strhom} and allows us to study the categories of multiplicative sections of such objects in a straightforward manner; as an application, we obtain the Morita invariance of such categories in \S \ref{subsec:mulsec} (which was proven for LA-groupoids in \cite{ortwal}). We also use the duality thus established to develop the Lie theory of quasi LA-groupoids, following the differentiation and integration results for LA-groupoids discussed in \cite{burcabhoy}.  Moreover, we also treat $L_2$-algebroids, generalizing the equivalence proven in \cite{bacr} between Lie 2-algebras and 2-term $L_\infty$-algebras, see \S \ref{subsec:l2alg}. 

Question \eqref{q3} naturally leads to a subquestion:
\begin{align}
    \text{What is a Lie bialgebroid over a differentiable stack?}\label{q4}\tag{Q4}
\end{align}
Since a Lie bialgebroid can be described as a homological vector field together with a compatible Poisson bracket, our answer to \eqref{q2} leads naturally to an answer to \eqref{q4} and hence to the study of {\em multiplicative Lie bialgebroids} \S \ref{subsec:catliebia}. As we pointed out before, we obtain then a clear general picture for a number of constructions in \cite{cc:bia,bur:dou, xu:2bia,kra:bia}. In particular, when the relevant Poisson bracket is symplectic, our multiplicative Lie bialgebroids
 become the cotangent groupoids of quasi Poisson groupoids \S \ref{subsec:cotqpoi}. Cotangents of quasi Poisson groupoids are then categorified $QP$-manifolds of degree 1. As a result, we  clarify some of the observations in \cite{l2algqpoi} about the presence of a Lie 2-algebra structure on the category of multiplicative 1-forms of a quasi Poisson groupoid. 
 
 In \S \ref{sec:2q-groupoids} we study quasi $Q$-groupoids of degree two, answering the questions
 \begin{align}
    &\text{What is a Courant algebroid over a differentiable stack?}\label{q5}\tag{Q5}\\
    &\text{What is the double of a multiplicative Lie bialgebroid?}\label{q6}\tag{Q6}
\end{align}
The work in this section then promotes the ideas of \cite{liuweixu,roycou} to the setting of differentiable stacks. Thus our definition of {\em quasi CA-groupoids} can be regarded as a categorification of the idea of $QP$-manifolds of degree 2 and it includes as examples exact (twisted) CA-groupoids as in \cite{tracou, raj:qgr,muldir} and also the doubles of the multiplicative Lie bialgebroids described in \S \ref{subsec:catliebia}.
 
 In Appendix \S \ref{sec:douqpoi} we include a description of double quasi Poisson groupoids which are some of the global counterparts of the multiplicative Lie bialgebroids of \S \ref{subsec:catliebia}. In this regard, we generalize the correspondence between quasi-Manin triples of Lie 2-algebras and quasi Poisson 2-groups \cite{poi2gro} by showing that quasi Poisson LA-groupoids over a unit manifold are indeed integrable to a double quasi Poisson groupoid under natural conditions.

\subsection{Future outlook} 
The work presented here motivates a number of questions that mostly have to do with the global aspects of the theory. Now we shall briefly elaborate on this.

Our concept of quasi-LA groupoids should be well suited to formulate the integrability condition of quasi-Nijenhuis tensors over Lie groupoids, see \cite[Remark  6.4]{cla:dir}. In particular, one should be able to define quasi-complex structures over Lie groupoids using these objects, thus providing a suitable concept of complex structure on a differentiable stack. In the same spirit, one can use the quasi-CA groupoids that we define here as background for generalized complex structures up to homotopy. We hope to analyse these ideas in particular examples in the future.

The notions introduced here can be significantly generalized. In fact, it is natural to consider homological vector fields over graded differential higher (and derived)  stacks. To do so one can work with graded simplicial manifolds and shifted multivector fields as in \cite{pridh:hop}. Notably, this would allow us to include $L_\infty$-algebroids in full generality instead of their truncated versions. Also, $n$-shifted Poisson and symplectic structures should give other natural examples of these higher Lie algebroids via central extensions as in \S \ref{subsec:cenext}. However, working in such generality makes difficult to obtain explicit examples and formulae which was one of our objectives in this project. 

In a different direction, one can try to find the global counterparts of quasi $Q$-groupoids and their morphisms. As conceptual guidance for this integration problem, one can look at three different particular cases. (1) Firstly, we have the integration of representations up to homotopy \cite{intruth,int2ter}. Since 2-term representations up to homotopy can be viewed as an example of quasi LA-groupoid morphism, there should be a general relationship between the integration of quasi LA-groupoids morphisms and two-dimensional holonomy, as suggested by the results in \cite{highol, int2ter, surhol}. We believe that a better understanding of this relationship would allow us to study Wilson surface observables \cite{surhol} in greater generality and also in contexts more closely related to (higher) symplectic structures. (2) Secondly, it is well known that double Lie groupoids are the integrations of LA-groupoids \cite{macdou} but there has been little progress integrating these objects, see \cite{morla}. We hope that combining our more flexible concept of morphism for (quasi) LA-groupoids with Sullivan's spatial realization (as applied in \cite{sev:some}) can shed some light on this problem. (3) Finally, we expect that integrating quasi LA-groupoids over unit groupoids should produce stacky Lie groupoids as in \cite{cc:ngro}.

On a related note, one can try to quantize the categorified $QP$-manifolds described here. In other words, it seems interesting to study the AKSZ construction \cite{aksz} but adapted to the setting of graded differentiable stacks, in the spirit of \cite{cal:aksz}. In particular, we expect that categorifying the usual Poisson sigma model should give us a way of quantizing and integrating quasi Poisson groupoids, in the same way that Poisson manifolds are quantized and integrated in \cite{catfelqua,catfel}. We hope that progress in this direction can also shed light on the classical AKSZ sigma models from a global perspective.

\vspace{3mm}

\noindent{\bf Acknowledgements.} We thank Henrique Bursztyn, Alejandro Cabrera, Rajan Mehta, Cristian Ortiz and Chenchang Zhu for fruitful discussions  and suggestions on previous drafts of this paper. D. A. thanks all the members of the Geometric Structures Laboratory at the University of Toronto for countless inspiring conversations. M.C. also thanks all the members of the Higher Structure Seminar at G\"{o}ttingen. 

\section{Quasi Q-groupoids}\label{sec:qqgpd} 

In this section we introduce quasi $Q$-groupoids, a generalization of the $Q$-groupoids introduced in \cite{raj:qgr}.  We approach these objects from two complementary viewpoints: one in terms of functors and natural transformations, the other one, in terms of Maurer-Cartan elements. This last description allows us to study their compatibility with multiplicative homotopy Poisson structures. We conclude this section by showing that quasi $Q$-groupoids form a strict $2$-category.

As motivation for the general theory of graded manifolds, let us emphasize its remarkable power to express classical differential geometric ideas in a concise and conceptual manner, see \S \ref{sec:fun1} and \S \ref{sec:RS} for the respective applications of supergeometry to Lie and Courant algebroids.

\subsection{Graded manifolds}
Given a non-negative integer $n\in\bN$, let $\mathbf{V}=\oplus_{i=1}^n V_{i}$ be a graded vector space.
An \emph{$\mathbb{N}$-manifold of degree $n$} (or simply an \emph{$n$-manifold})  is a ringed space $\cM=(M, C^\bullet(\cM))$, where $M$ is a smooth manifold and $C^\bullet(\cM)$ is a sheaf of graded commutative algebras such that, for all $x\in M$, there exists a neighborhood $U$ and an isomorphism
    \begin{equation}\label{locality}
        C^\bullet(\cM)|_U\cong C^{\infty}(U)\otimes \sym \mathbf{V}
    \end{equation}
    of sheaves of graded commutative algebras, where  $\sym \mathbf{V}$ denotes the graded symmetric algebra of $\mathbf{V}$. The manifold $M$ is known as the \emph{body} of $\cM$. A global section of $C^i(\cM)$ is called a \emph{homogeneous function of degree $i$}; we write $|f|=i$ for the degree of $f$. 

A morphism of $n$-manifolds $\Psi:\cM\to \cN$ is a degree preserving morphism of ringed spaces, given by a pair $\Psi=(\psi, \psi^\sharp)$, where $\psi:M \to N$ is a smooth map and $\psi^\sharp:C^\bullet(\cN)\to\psi_* C^\bullet(\cM)$ is a degree preserving morphism of sheaves of algebras over $N$. 

For each $n \in \bN$, $n$-manifolds with their morphisms form a category that we denote by $\man{n}$. Now we collect some basic examples, for more details about $\bN$-manifolds see e.g. \cite{cat:int, raj:tes, raj:qgr, roy:on}.

\begin{exa}[1-manifolds]\label{1-man}
Given a vector bundle $E\to M$, we can define the $1$-manifold   $$E[1]=(M,\Gamma(\wedge^\bullet E^*)).$$
Recall that any $1$-manifold is canonically of this type, see e.g. \cite{bon:on}.
Two important examples of 1-manifolds are $$T[1]M=(M,\Omega^\bullet(M)) \qquad \text{and} \qquad T^*[1]M=(M,\fX^\bullet(M)),$$ 
where $\Omega^\bullet(M)$ and $\fX^\bullet(M)$ are the sheaves of differential forms and multivector fields, respectively.
\end{exa}

\begin{exa}[Shifted tangent bundles of $n$-manifolds] \label{shift-tan-cotan}
One can generalize the shifted tangent bundles of ordinary manifolds considered in
Example \ref{1-man} to arbitrary $n$-manifolds $\cM=(M, C_\cM)$. Rather than introducing  $\Omega^\bullet(\cM)$ (see e.g. 
\cite{raj:tes}), we will give a description using charts and coordinates as in \cite{roy:on}.

Let $U$ be a chart of $\cM$  with coordinates $\{x^\alpha, e^{\beta}\}$,
so that $\{x^\alpha\}$ are coordinates on $U$  
and $C^\bullet(\cM)|_U=C^\infty(U)\otimes \sym\mathbf{V}$ with $\mathbf{V}=\langle e^\beta\rangle$. 
Just as for ordinary smooth manifolds, we can introduce the differentials of the coordinates, $\{dx^\alpha, de^\beta\}$, with the transformation rule 
\begin{equation*}
    \left\{\begin{array}{ll}
        \widehat{x}^\alpha=  & F^\alpha(x), \\
        \widehat{e}^\beta=  & F^\beta(x, e)
    \end{array} \right.\quad \Longrightarrow \quad\left\{\begin{array}{ll}
        d\widehat{x}^\alpha=  & \frac{\partial F^\alpha(x)}{\partial x^{\alpha'}}dx^{\alpha'}, \\
        d\widehat{e}^\beta= & \frac{\partial F^\beta(x, e)}{\partial x^\alpha}dx^\alpha+\frac{\partial F^\beta(x, e)}{\partial e^{\beta'}}dx^{\beta'}. 
    \end{array}\right.
\end{equation*}

For $k\geq 1$ we define the $(n+k)$-manifold $T[k]\cM=(M, C_{T[k]\cM})$ by
$$ C^\bullet(T[k]\cM)|_U= C^\infty(U)\otimes \sym\mathbf{W}\quad\text{with}\quad \mathbf{W}=\mathbf{V}\oplus\langle dx^\alpha, de^\beta \; ;\; |dx^\alpha|=k,\  |de^\beta|=|e^\beta|+k\rangle.
$$
\end{exa}

\begin{exa}[Vector and multivector fields]

Let $\cM$ be an $n$-manifold. A \emph{vector field of degree $k$} on $\cM$ 
is a degree $k$ derivation $X$ of the graded algebra $C^\bullet(\cM)$, i.e.,  an  
$\mathbb R$-linear map $X:C^\bullet(\cM)\to C^{\bullet+k}(\cM)$ with the property that, for all 
$f, g \in C_\cM$ with $f$ homogeneous, $|X(f)| =|f|+k$ and
    \begin{equation}\label{der}
 X(fg)=X(f)g+(-1)^{|f|k}fX(g).
    \end{equation}
Vector fields give rise to a sheaf of $C^\bullet(\cM)$-modules over $M$. The sheaf of degree 
$k$ vector fields is denoted by $\fX^{1,k}(\cM)$. The graded commutator of vector 
fields, defined for homogeneous vector fields $X, Y$ by
\begin{equation*}
    [X,Y]=XY-(-1)^{|X||Y|}YX,
\end{equation*}
makes $\fX^{1,\bullet}(\cM)$ into a sheaf of graded Lie algebras. A degree $1$ vector field on an $n$-manifold $\cM$, $Q\in\fX^{1,1}(\cM),$ such that $$[Q,Q]=2Q^2=0$$ is 
called a \emph{homological vector field}.
Note that, in this case, $(C^\bullet(\cM), Q)$ is a differential complex, justifying 
the terminology. The pair $(\cM , Q)$ will be referred as a \emph{$Q$-manifold}. 

    Till now we just defined vector fields on an $n$-manifold $\cM$. However, we also need to introduce \emph{multivector fields}. They are the elements of the graded symmetric algebra
    $$\Big(\bigoplus_i \sym^i \fX^{1,\bullet}(\cM)=\fX^{i,\bullet}(\cM), \wedge\Big)\quad\text{with}\quad X\wedge Y=(-1)^{1\cdot 1+|X||Y|}Y\wedge X\quad X,Y\in \fX^{1,\bullet}(\cM).$$
    If we extend the Lie bracket using the Leibniz rule, this algebra carries a Poisson bracket of bi-degree $(-1, 0)$ that we denote by $[\cdot,\cdot]$ and it is known as the \emph{Schouten bracket}.
\end{exa}

\subsection{Lie groupoids and algebroids in the category of graded manifolds}
 Following \cite{raj:tes, raj:qgr}, see also \cite{gra:gro}, we say that an \emph{$n$-graded Lie groupoid}\footnote{Do not confuse with the more familiar notion of Lie $n$-groupoid.} is a Lie groupoid object in the category $\man{n}$ such that its source and target maps are submersions. More concretely, this means a collection of data $(\cG\rightrightarrows \cM, \gs,\gt,\gm, \gu, \gi)$ where $\cG=(G, C^\bullet(\cG))$ and $\cM=(M, C^\bullet(\cM))$ are two $n$-manifolds and $\gs,\gt:\cG\to \cM, \ \gm:\cG^{(2)}=\cG_\gs\times_\gt\cG\to \cG,\ \gu:\cM\to \cG $ and $\gi:\cG\to \cG$ are graded morphisms that make the following diagrams commute
\begin{equation*}
    \xymatrix{\cG^{(3)}\ar[d]_{\gm\times \id}\ar[r]^{\id\times \gm}&\cG^{(2)}\ar[d]^\gm\\
	\cG^{(2)}\ar[r]_\gm&\cG}\quad \xymatrix{\cG\ar[dr]_{\id}\ar[r]^{(\id,\gu\circ \gs)}&\cG^{(2)}\ar[d]^\gm\\ &\cG}\quad \xymatrix{\cG\ar[dr]_{\id}\ar[r]^{(\gu\circ \gt,\id)}&\cG^{(2)}\ar[d]^\gm\\ &\cG}\quad  \xymatrix{\cG\ar[dr]_{\gu\circ \gs}\ar[r]^{(\gi,\id)}&\cG^{(2)}\ar[d]^\gm\\ &\cG}\quad \xymatrix{\cG\ar[dr]_{\gu\circ \gt}\ar[r]^{(\id, \gi)}&\cG^{(2)}\ar[d]^\gm\\ &\cG}
\end{equation*}

It follows from the definition that the underlying body $G\rightrightarrows M$ of an $n$-graded groupoid is a usual Lie groupoid.

An \emph{$n$-graded Lie groupoid morphism between $\cG\rightrightarrows\cM$ and $\cH\rightrightarrows\cN$} is a pair of $n$-manifold morphisms $\Phi_1:\cG\to\cH$ and $\Phi_0:\cM\to\cN$ for which the following diagrams commute
\begin{equation*}
\begin{array}{c}
    \xymatrix{\cG\ar^{\Phi_1}[r]\ar@<-.5ex>_{\gs}[d] \ar@<.5ex>^\gt[d]& \cH\ar@<-.5ex>_{\gs}[d] \ar@<.5ex>^{\gt}[d]\\ \cM\ar_{\Phi_0}[r]&\cN}
\end{array}\quad \text{and}\quad \begin{array}{c}
     \xymatrix{\cG^{(2)}\ar[d]_{\gm_\cG}\ar[r]^{\Phi_1^{(2)}}&\cH^{(2)}\ar[d]^{\gm_\cH}\\ \cG\ar[r]_{\Phi_1}&\cH.}
\end{array}
\end{equation*}

\begin{exa}[Shifted tangent groupoids]
Let $(\cG\rightrightarrows\cM, \gs,\gt, \gm, \gu, \gi)$ be an $n$-graded Lie groupoid. Then for any $j\in\bN$ we have that $(T[j]\cG\rightrightarrows T[j]\cM, T[j]\gs, T[j]\gt, T[j]\gm, T[j]\gu, T[j]\gi)$ defines an  $(n+j)$-graded Lie groupoid.
\end{exa}

\begin{exa}[Morita equivalence for graded Lie groupoids]
    As in the classical case, one can declare two $n$-graded Lie groupoids $\cG$ and $\cH$ to be Morita equivalent if there is a zig-zag of weak equivalences $\cG\leftarrow\cK\to \cH$. The definition of a weak equivalence is exactly the same as the classical one given in \cite[$\S 5.4$]{moeint}.
\end{exa}

Analogously to the classical theory, we have that an \emph{$n$-graded Lie algebroid} is a Lie algebroid object in the category $\man{n}$, see \cite{raj:tes, raj:qalg}. More concretely, such a structure consists of a collection of data $(\cA\to\cM,\rho, [\cdot,\cdot])$ where $\cA$ and $\cM$ are two $n$-manifolds that forms a vector bundle $\cA\to \cM$, $\rho:\cA\to T\cM$ is a vector bundle morphism covering the identity and $[\cdot,\cdot]:\Gamma^i\cA\times\Gamma^j\cA\to\Gamma^{i+j}\cA$ is a bracket, where $\Gamma^i\cA$ denotes the degree $i$ sections of $\cA\to\cM$,  satisfying:
\begin{enumerate}
	\item $[a_1, a_2]=-(-1)^{|a_1||a_2|}[a_2, a_1]$,
	\item $[a_1, f a_2]=\rho(a_1)(f) a_2+(-1)^{|a_1||f|}f[a_1,a_2]$,
	\item $[a_1,[a_2,a_3]]=[[a_1,a_2],a_3]+(-1)^{|a_1||a_2|}[a_2,[a_1,a_3]],$
\end{enumerate}
for $a_1, a_2, a_3\in\Gamma \cA, \ f\in C_\cM$.

The construction of the $n$-graded Lie algebroid of an $n$-graded Lie groupoid is identical to the classical one \cite[$\S 3.2.1$]{raj:tes}. Let $\cG\rightrightarrows \cM$ be an $n$-graded Lie groupoid, then the vector bundle $\cA=\gu^*\ker(T\gs)\to \cM$ has the structure of a graded algebroid where the anchor is given by $\rho=T\gt$ and the bracket is given by the identification 
$$\Gamma\cA\cong\{ \text{ Right invariant vector fields on } \cG\}$$
where a \emph{right (resp. left) invariant vector field} $X\in\fX^{1,j}(\cG)$ is defined by the equations 
\begin{eqnarray}
    T[j]\gs\circ X=0 \quad \text{and}\quad T[j]\gm\circ(X\times 0)\circ ((\gu\circ\gt)\times\id)\circ \Delta=X\label{eq:R}\\
    (resp. \quad T[j]\gt\circ X=0 \quad \text{and}\quad T[j]\gm\circ(0\times X)\circ (\id\times (\gu\circ\gs))\circ \Delta=X),
\end{eqnarray}
with $0:\cG\to T[j]\cG$ the zero section and $\Delta:\cG\to\cG\times\cG$ the diagonal map.

\begin{exa}(The shifted tangent algebroids)
Let $\cG\rightrightarrows \cM$ be an $n$-graded Lie groupoid with $n$-graded Lie algebroid $\cA\to\cM$ then we have that $T[k]\cA\to\cM$ are $(n+k)$-graded Lie algebroids. In fact, $\cA_{T[k]\cG}\cong T[k]\cA$. Moreover, we can see that $T[k]\cG\to T[k]\cM$ is a VB-groupoid with base groupoid $\cG\to\cM$ and core given by $\cA[k]\to\cM$. A similar description holds for the corresponding $(n+k)$-graded Lie algebroids, see \cite[$\S 11.2$]{mac:book} for the classical picture.
\end{exa}

\subsection{Homological vector fields on graded Lie groupoids}
The next step is to study vector fields on $n$-graded Lie groupoids. Let $\cG\rightrightarrows \cM$ be an $n$-graded Lie groupoid with $n$-graded Lie algebroid $\cA\to\cM$. There are three classes of distinguished vector fields \emph{left invariant, right invariat } and \emph{multiplicative}. 

As we already pointed out, left and right invariant vector fields can be identified with sections of the $n$-graded Lie algebroid. More explicitly, if we take a section of degree $j$, $a\in\Gamma^j \cA$, then we can define its corresponding right and left invariant vector fields on $\cG$ by the formulas $$a^r=T[j]\gm\circ(a\circ \gt\times 0)\circ \Delta \quad \text{ and }\quad a^l=T[j]\gm\circ(0\times a\circ \gs)\circ \Delta.$$

We say that $X\in\fX^{1,k}(\cG)$ is a \emph{multiplicative vector field} if there is $X_0\in\fX^{1,k}(\cM)$ such that
\begin{equation*}
	\xymatrix{\cG\ar^{X}[r]\ar@<-.5ex>[d] \ar@<.5ex>[d]& T[k]\cG\ar@<-.5ex>[d] \ar@<.5ex>[d]\\ \cM\ar_{X_0}[r]&T[k]\cM}
\end{equation*}
is a groupoid morphism. Notice that for any $a\in\Gamma^j\cA$, the vector field $X=a^l-a^r$ is multiplicative with $X_0=\rho(a)$. We denote the multiplicative vector fields by $\fX^{1,\bullet}_{mul}(\cG)$

\begin{rema}
Our definition of multiplicative vector fields follows the classical definition introduced in \cite[$\S 3.3$]{mac:cla}. In \cite[$\S 3.7$]{raj:qgr} the simplicial approach is adopted. It is straightforward to see that both definitions coincide.
\end{rema}

\begin{defi}\label{def:qqgpd}
An  \emph{$n$-graded quasi $Q$-groupoid } $(\cG\rightrightarrows\cM, Q, \qq)$ is an $n$-graded Lie groupoid $\cG \rightrightarrows \cM$ endowed with a degree $1$ multiplicative vector field $Q\in\fX^{1,1}_{mul}(\cG)$ and a degree $2$ section of the graded algebroid $\qq\in\Gamma^2\cA$ satisfying $$Q^2=\qq^l-\qq^r \qquad \text{ and }\qquad [Q,\qq^r]=0.$$
\end{defi}
When $\qq=0$ we recover the $Q$-groupoids introduced in \cite{raj:tes,raj:qgr}. We postpone the discussion of non-trivial examples to $\S\ref{sec:1q-groupoids}$ and $\S\ref{sec:2q-groupoids}$.

Multiplicative vector fields have an infinitesimal counterpart given by IM\footnote{IM stands from Inifnitesimally Multiplicative.} vector fields. To be precise, we say that $X\in\fX^{1,k}(\cA)$ is an \emph{ IM vector field} if there is $X_0\in\fX^{1,k}(\cM)$ such that
\begin{equation*}
	\xymatrix{\cA\ar[d]\ar[r]^{X}&T[k]\cA\ar[d]\\ \cM\ar[r]_{X_0}&T[k]\cM}
\end{equation*}
gives a graded Lie algebroid morphism. 
\begin{defi}
An  \emph{$n$-graded quasi $Q$-algebroid } $(\cA\to\cM, Q, \qq)$ is an $n$-graded Lie algebroid $\cA\to\cM$ endowed with a degree $1$ IM vector field $Q\in\fX^{1,1}(\cA)$ and a degree $2$ section $\qq\in\Gamma^2\cA$ satisfying $$Q^2=[\qq, \cdot] \qquad \text{ and }\qquad Q(\qq)=0.$$
\end{defi}

When $\qq=0$ we recover the $Q$-algebroids introduced in \cite{raj:tes,raj:qalg}. If in addition we have $([\cdot,\cdot]=0,\rho=0)$ we get the $Q$-bundles of \cite{kot:Qbun}. 

Thus, one obtains the following natural differentiation result.
\begin{prop}
Let $(\cG\rightrightarrows\cM, Q, \qq)$ be an $n$-graded quasi Q-groupoid then $(\cA\to\cM, \Lie(Q), \qq)$ is an $n$-graded quasi Q-algebroid.
\end{prop}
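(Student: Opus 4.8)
The plan is to differentiate the multiplicative data $(Q,\qq)$ and then verify the two defining identities of a quasi $Q$-algebroid directly on the graded Lie algebroid $\cA\to\cM$. The whole argument is the graded transcription of the classical differentiation of multiplicative vector fields into IM vector fields, so the work splits into one structural step (setting up $\Lie(Q)$ and the bracket compatibilities of the Lie functor) and one short computational step (deducing the two identities from those of Definition~\ref{def:qqgpd}).

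First I would construct $\Lie(Q)$ and check it is an IM vector field. By definition $Q\in\fX^{1,1}_{mul}(\cG)$ is a morphism of $n$-graded Lie groupoids from $\cG\rightrightarrows\cM$ to $T[1]\cG\rightrightarrows T[1]\cM$ over the base vector field $Q_0\in\fX^{1,1}(\cM)$. Applying the Lie functor for graded groupoids (which, as recalled above, is identical to the classical one) yields a morphism of $n$-graded Lie algebroids from $\cA$ to $\cA_{T[1]\cG}$ covering $Q_0\colon\cM\to T[1]\cM$. Invoking the identification $\cA_{T[1]\cG}\cong T[1]\cA$ from the shifted tangent algebroid example, this is exactly a degree $1$ IM vector field $\Lie(Q)\in\fX^{1,1}(\cA)$, and $\qq\in\Gamma^2\cA$ is unchanged, so the data $(\cA\to\cM,\Lie(Q),\qq)$ is of the right type.

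The technical heart is then the compatibility of $\Lie$ with the graded Lie algebra of multiplicative vector fields. Writing $a\mapsto a^r$ for the isomorphism of $\Gamma\cA$ onto the right-invariant vector fields and identifying $\Lie(X)$ with the derivation it induces on $\Gamma\cA$, I would establish the master identity
\[
 [X,a^r]=(\Lie(X)(a))^r,\qquad a\in\Gamma\cA,\ X\in\fX^{1,\bullet}_{mul}(\cG),
\]
together with the fact that $X\mapsto\Lie(X)$ is a morphism of graded Lie algebras, so $\Lie([X,Y])=[\Lie(X),\Lie(Y)]$. Both are the $\man{n}$ analogues of standard facts: the first follows from functoriality of $\Lie$ applied to the characterizations of $a^r$ and of $\Lie(X)$, the second from functoriality together with the compatibility of the prolongation $T[1]$ with commutators. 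I would also record the invariant-field relations $[a^l,b^r]=0$ and $[a^r,b^r]=-[a,b]^r$ fixed by these conventions.

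With these in hand the two identities drop out. For the first, the morphism property gives $\Lie(Q)^2=\tfrac12[\Lie(Q),\Lie(Q)]=\Lie(\tfrac12[Q,Q])=\Lie(Q^2)$; since $Q^2$ is multiplicative, the master identity together with $Q^2=\qq^l-\qq^r$ lets me evaluate the induced derivation on any $b\in\Gamma\cA$,
\[
 (\Lie(Q)^2(b))^r=[Q^2,b^r]=[\qq^l-\qq^r,b^r]=[\qq,b]^r,
\]
whence $\Lie(Q)^2=[\qq,\cdot\,]$. For the second, the master identity applied to $X=Q$ and $a=\qq$ gives $(\Lie(Q)(\qq))^r=[Q,\qq^r]$, which vanishes by the hypothesis $[Q,\qq^r]=0$; injectivity of $a\mapsto a^r$ then forces $\Lie(Q)(\qq)=0$. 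I expect the main obstacle to be precisely the careful verification of the master identity and the graded Lie algebra morphism property in the graded category, where the Koszul sign rule must be tracked consistently through the prolongation $T[1]$ and the identification $\cA_{T[1]\cG}\cong T[1]\cA$; once those signs are pinned down the deduction above is immediate.
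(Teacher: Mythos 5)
The paper offers no proof of this proposition at all: it is stated as a ``natural differentiation result'' immediately after the two definitions, so there is nothing to compare your argument against line by line. Your proposal supplies exactly the argument the authors evidently have in mind: apply the Lie functor to the groupoid morphism $Q:\cG\to T[1]\cG$ and use $\cA_{T[1]\cG}\cong T[1]\cA$ to land in IM vector fields, then reduce both quasi $Q$-algebroid identities to the master identity $[X,a^r]=(\Lie(X)(a))^r$ together with $[a^l,b^r]=0$, which is the standard Mackenzie--Xu differentiation of multiplicative vector fields transcribed to the graded setting. The computation is internally consistent with the conventions you record; the only point to watch is that you take $[a^r,b^r]=-[a,b]^r$, whereas the paper defines the bracket on $\Gamma\cA$ via the identification with right-invariant vector fields (which reads as $[a^r,b^r]=[a,b]^r$), so with the paper's stated convention the first identity comes out as $\Lie(Q)^2=-[\qq,\cdot]$ unless a compensating sign is built into the definition of the induced bracket or of $[\qq,\cdot]$ --- this is a convention mismatch to pin down, not a gap in the argument.
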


In order to shed light on the definition of quasi Q-structures we re-interpret them in several ways. In $\S \ref{subsec:qqnat}$ we show how $q$ gives a natural transformation between $Q^2$ and $0$. In $\S \ref{sec:MC}$ we demonstrate that quasi Q-structures are Maurer-Cartan elements. Finally, in Proposition \ref{prop:actR1} we characterize them as actions of $\bR[-1]$ on $n$-graded Lie groupoids. 

\subsubsection{Interpretation in terms of natural transformations}\label{subsec:qqnat} 
Here we describe the $\qq\in\Gamma^2\cA$ as a natural transformation between $Q^2$ and the zero vector field. In this way, we generalize the work of \cite{bacr}, where the Jacobiator of a Lie 2-algebra is defined as a natural transformation. We explain this relation in $\S \ref{subsec:l2alg}$.

Let $(\Phi_1,\Phi_0),(\Psi_1, \Psi_0):(\cG\rightrightarrows\cM)\to(\cH\rightrightarrows\cN)$ be two graded groupoid morphisms. Recall that a \emph{natural transformation $\alpha:(\Phi_1,\Phi_0)\Rightarrow (\Psi_1, \Psi_0)$} is a graded manifold morphism $\alpha: \cM\to \cH$ satisfying the following equations
\begin{equation}\label{eq:NT}
    \gs_\cH\circ\alpha=\Phi_0,\quad \gt_\cH\circ\alpha=\Psi_0\quad\text{and}\quad  \gm_\cH\circ(\Phi_1\times \alpha)\circ(\id\times \gs_{\cG})=\gm_\cH\circ( \alpha\times \Psi_1)\circ(\gt_{\cG}\times\id).
\end{equation}

\begin{prop}\label{prop:NT}
    Let $\cG\rightrightarrows\cM$ be a graded Lie groupoid and $Q\in\fX^{1,1}_{mul}(\cG)$. Then $\qq\in\Gamma^2\cA$ satisfying $Q^2=\qq^l-\qq^r$ is equivalent to a natural transformation \begin{center} 
\begin{tikzpicture}
\matrix[matrix of nodes,column sep=2cm] (cd)
{
   $\mathcal{G}$  & $T[2]\mathcal{G}$  \\
};
\draw[->] (cd-1-1) to[bend left=50] node[label=above:$0_{\mathcal{G} }$] (U) {} (cd-1-2);
\draw[->] (cd-1-1) to[bend right=50,name=D] node[label=below:$Q^2$] (V) {} (cd-1-2);
\draw[double,double equal sign distance,-implies,shorten >=10pt,shorten <=10pt] 
  (U) -- node[label=right:$\alpha $] {} (V);
\end{tikzpicture}
\end{center}   
such that $\pi_\cG\circ \alpha=\gu$, where $\pi_\cG:T[2]\cG\to \cG$ is the projection. Moreover $[Q, \qq^r]=0$ if and only if $\alpha$ preserves $Q$, i.e. $T[2]Q\circ\alpha=T[1]\alpha\circ Q_0$.
\end{prop}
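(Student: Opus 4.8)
The plan is to first identify the data of such an $\alpha$ with a section $\qq\in\Gamma^2\cA$, and then to match the naturality identity with the equation $Q^2=\qq^l-\qq^r$ and the $Q$-preservation identity with $[Q,\qq^r]=0$. For the dictionary, recall that $T[2]\cG\to\cG$ is a VB-groupoid over $\cG\rightrightarrows\cM$ whose core is $\cA[2]=\gu^*\ker(T[2]\gs)$. Hence a graded morphism $\alpha\colon\cM\to T[2]\cG$ with $\pi_\cG\circ\alpha=\gu$ satisfying the first identity in \eqref{eq:NT}, namely $\gs_\cH\circ\alpha=T[2]\gs\circ\alpha=0_\cM$, is exactly a degree $2$ core section, i.e.\ a $\qq\in\Gamma^2\cA$; I write $\alpha=\iota\circ\qq$ for the inclusion $\iota$ of the core along the units. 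Under this identification the second identity in \eqref{eq:NT}, $\gt_\cH\circ\alpha=Q_0^2$, becomes $\rho(\qq)=Q_0^2$, which is precisely the body of the desired equation (apply $X\mapsto X_0$ to both sides of $Q^2=\qq^l-\qq^r$ and use that $\qq^l-\qq^r$ is multiplicative over $\rho(\qq)$).

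The core of the argument is to show that the third, naturality identity in \eqref{eq:NT} is equivalent to $Q^2=\qq^l-\qq^r$ over all of $\cG$. I would unwind it using the defining relations $\qq^r=T[2]\gm\circ(\qq\circ\gt\times 0)\circ\Delta$ and $\qq^l=T[2]\gm\circ(0\times\qq\circ\gs)\circ\Delta$ for the right- and left-invariant vector fields. The key structural input is that $T[2]\cG$ is a VB-groupoid: its multiplication $T[2]\gm$ is linear over $\gm$ and is a morphism for the fibrewise vector-space structure on $T[2]\cG\to\cG$. This linearity lets me recognise the two groupoid products occurring in the naturality square, evaluated at a point $g$, as the translates $\qq^r(g)$ and $\qq^l(g)$, and to combine them inside the single fibre $T_g[2]\cG$ into the fibrewise difference $\qq^l(g)-\qq^r(g)$; matching with the remaining factor gives $Q^2(g)=\qq^l(g)-\qq^r(g)$ for every $g$, and the steps reverse. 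Equivalently, one may verify the identity in a chart $\{x^\alpha,e^\beta\}$ as in Example~\ref{shift-tan-cotan}, transcribing each product through the transformation rule for $\{dx^\alpha,de^\beta\}$. I expect the only genuine difficulty to be the careful handling of the composability constraints and of the Koszul signs in this transcription; the conceptual content is the standard identification of a coboundary multiplicative vector field with an inner one.

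It remains to treat the condition that $\alpha$ preserves $Q$. Through the dictionary $\alpha=\iota\circ\qq$, the relation $T[2]Q\circ\alpha=T[1]\alpha\circ Q_0$ says exactly that the section $\qq$ is invariant under the multiplicative field $Q$, that is $\Lie(Q)\,\qq=0$, where $\Lie(Q)$ denotes the derivation of $\Gamma\cA$ induced by the IM vector field of the differentiation Proposition. I would then invoke the standard compatibility of multiplicative with right-invariant vector fields, $[Q,a^r]=(\Lie(Q)\,a)^r$ for $a\in\Gamma\cA$, which is obtained by differentiating the defining relation of $a^r$ along $Q$. Since $a\mapsto a^r$ is injective, this yields
\[
[Q,\qq^r]=0\iff\Lie(Q)\,\qq=0\iff T[2]Q\circ\alpha=T[1]\alpha\circ Q_0,
\]
which is the final equivalence and completes the proof. (One can also avoid naming $\Lie(Q)$ altogether by computing $[Q,\qq^r]$ directly as the right-invariant vector field measuring the failure of $T[2]Q\circ\alpha=T[1]\alpha\circ Q_0$.)
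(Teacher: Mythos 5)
Your proposal is correct and follows essentially the same route as the paper: identify $\alpha$ with a degree $2$ core section $\qq$ via the projection and source conditions, unwind the naturality square using the linearity of $T[2]\gm$ on composable pairs to obtain $Q^2=\qq^l-\qq^r$, and handle the ``moreover'' part by viewing $Q$ and $\alpha$ as derivations (your identity $[Q,a^r]=(\Lie(Q)\,a)^r$ together with injectivity of $a\mapsto a^r$ is just a more explicit phrasing of the paper's one-line argument). No gaps.
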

\begin{proof}
    Let $\alpha$ be a natural transformation satisfying $\pi_\cG\circ \alpha=\gu$. Then $T[2] \mathtt{s}\circ \alpha =0_{\mathcal{M} }$ and so, combining this fact with equation $\pi_{\mathcal{G} }\circ \alpha =\gu$, we get that $\alpha $ is a degree 2 section of $\cA$. Since $\alpha$ is a natural transformation we have that the  diagram
    \begin{equation}\label{NT}
    \begin{array}{c}
         \xymatrix{ & T[2]\mathcal{G}^{(2)} 
\ar[dr]^-{T[2] \mathtt{m} } & \\ \mathcal{G}\ar[ur]^-{(0_\cG,\alpha \circ \mathtt{s})}\ar[dr]_-{(\alpha \circ \mathtt{t},Q^2 )} && T[2] \mathcal{G} \\
& T[2]\mathcal{G}^{(2)} 
\ar[ur]_-{T[2]\mathtt{m}}  &} 
    \end{array}
    \end{equation}
is commutative. Thus $Q^2=T[2]\gm ((T[2]\gi) \circ \alpha \circ \gt+ 0_\cG ,\alpha\circ \gs)$ and hence
\begin{equation}\label{eq1}
    Q^2=T[2]\gm ((T[2]\gi) \circ \alpha \circ \gt+ 0_\cG ,0_\cG+\alpha\circ \gs)=\alpha^l+ ((T[2]\mathtt{i}) \circ \alpha )^r=\alpha^l-\alpha^r,
\end{equation}
since $T[2]\mathtt{m}$ is linear on pairs of composable sections. Conversely, starting from $\qq\in\Gamma^2\cA$ satisfying $Q^2=\qq^l-\qq^r$ we define $\alpha:\cM\to T[2]\cG$ as $\qq:\cM\to\cA[2]\subset T[2]\cG$. Then we get that 
$$T[2]\gs (\alpha)=T[2]\gs (\qq)=0,\quad T[2]\gt (\alpha)=T[2]\gt(\qq)=\rho(\qq)=Q_0^2$$
and by \eqref{eq1} the diagram \eqref{NT} commutes and so $\alpha $ is a natural transformation from $0_{\mathcal{G} }$ to $Q^2$. 

For the moreover part, notice that we can view $Q$ and $\alpha$ as derivations. Thus, we get that condition $[Q, \qq^r]=0$ is equivalent to the fact that $\alpha$ preserves $Q$. 
\end{proof}

\subsubsection{Interpretation in terms of the Maurer-Cartan equation}\label{sec:MC} 
Let $G\rightrightarrows M$ be a Lie groupoid with Lie algebroid $(A\to M,\rho, [\cdot,\cdot])$. It is shown in \cite[Example 7.2]{ortwal} that $\Gamma A\to \fX_{mul}(G)$ inherits a dgla\footnote{Differential graded Lie algebra.} structure given by
$$d(a)=a^r-a^l,\quad [X,Y]=[X,Y],\quad [X, a]=[X, a^r]\circ \gu \quad \text{and}\quad [a,b]=0,$$
where $a,b\in\Gamma A, \ X,Y\in\fX_{mul}(G).$ Moreover, in \cite[Prop 8.1]{ortwal} they show that if two groupoids are Morita equivalent, then the associated dgla`s are $L_\infty$ quasi-isomorphic.  As explained in \cite{hep:vec}, the above dgla codifies the vector fields on the differentiable stack presented by $G\rightrightarrows M$. For $n$-graded Lie groupoids we have an analogous result. 

\begin{prop}\label{prop:dgla}
    Let $\cG\rightrightarrows\cM$ be an $n$-graded Lie groupoid and $\cA\to \cM$ its $n$-graded Lie algebroid. Define $L_{\cG}=\oplus_{i\in\bZ}L^i$ with $L^i=\Gamma^{i+1}\cA\oplus \fX^{1,i}_{mul}(\cG)$. Then $L_{\cG}$ inherits a dgla structure given by
    $$d(a)=a^r-a^l,\quad [X,Y]=[X,Y],\quad [X, a]=[X, a^r]\circ \gu \quad \text{and}\quad [a,b]=0,$$
    where $a,b\in\Gamma^\bullet \cA, \ X,Y\in\fX^{1,\bullet}_{mul}(\cG).$ Moreover, if two $n$-graded Lie groupoid are Morita equivalent then the associated dgla`s are $L_\infty$ quasi-isomorphic.
\end{prop}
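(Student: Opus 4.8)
The plan is to treat the two assertions separately: first that $L_{\cG}$ is a dgla, and then the Morita invariance up to $L_\infty$ quasi-isomorphism. For the first part I would follow the ungraded argument of \cite[Example 7.2]{ortwal} in spirit, the only genuinely new issue being the careful bookkeeping of Koszul signs coming from the internal grading of $\cM$ and $\cG$. The conceptual point is that all three brackets are governed by the single ambient graded Lie algebra $\fX^{1,\bullet}(\cG)$ of all vector fields on $\cG$: both $\fX^{1,\bullet}_{mul}(\cG)$ and the images of $\Gamma^\bullet\cA$ under $a\mapsto a^r$ and $a\mapsto a^l$ sit inside it, and I would phrase every dgla axiom as an identity there and then read it back on $L_{\cG}$.

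Concretely, I would first record the structural facts that make the definition consistent: (i) the graded commutator of two multiplicative vector fields is again multiplicative, so $\fX^{1,\bullet}_{mul}(\cG)$ is a graded Lie subalgebra; (ii) $a\mapsto a^r$ and $a\mapsto a^l$ identify $\Gamma^\bullet\cA$ with graded Lie subalgebras of $\fX^{1,\bullet}(\cG)$ (up to sign, reproducing the algebroid bracket), with $[a^r,b^l]=0$; and (iii) for $X\in\fX^{1,\bullet}_{mul}(\cG)$ the brackets $[X,a^r]$ and $[X,a^l]$ are again right- resp.\ left-invariant and restrict along $\gu$ to one and the same section, which is exactly $[X,a]$. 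Fact (iii) is the heart of the matter: it is the graded analogue of the statement that the flow of a multiplicative vector field acts by groupoid automorphisms, and I would extract it either from the simplicial description of multiplicativity in \cite[\S 3.7]{raj:qgr} or from the $\gs$/$\gt$-relatedness built into the morphism condition for $X$. Granting (i)--(iii) the axioms follow: $d^2=0$ is immediate because $d$ lands in the multiplicative summand, on which it vanishes; graded antisymmetry is (i)--(ii) together with the defining formulas; the Leibniz rule is precisely the equivariance $[X,a]^r-[X,a]^l=\pm[X,a^r-a^l]$ supplied by (iii) (together with antisymmetry of the algebroid bracket for the $d[a,b]$ instance); and the graded Jacobi identity reduces to the ambient Jacobi identity in $\fX^{1,\bullet}(\cG)$ in every case with at least two vector-field entries, while the cases with two or three section entries vanish identically since $[a,b]=0$ and the bracket of two sections is $0$. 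I would run the sign computation once, in the worst case $[X,[Y,a]]$, and let the rest follow.

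For the Morita-invariance statement I would first reduce to a single weak equivalence $\cK\to\cG$, since $L_\infty$ quasi-isomorphism is an equivalence relation and a Morita equivalence is a span of such maps. The cleanest route is then to realise $L_{\cG}$, up to $L_\infty$ quasi-isomorphism, through a nerve model, building on \cite{hep:vec}: the multiplicative data is computed by the complex of invariant vector fields on the simplicial graded manifold $\cG^{(\bullet)}$, with the summand $\Gamma^{\bullet+1}\cA$ supplying the lowest simplicial degree and $d(a)=a^r-a^l$ realising the first differential. A weak equivalence induces a morphism of these simplicial graded manifolds which is a homotopy equivalence on the relevant normalised complexes, and transferring the dgla structure along this equivalence by homological perturbation yields the desired $L_\infty$ quasi-isomorphism. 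As an alternative I would keep in reserve a reduction to the ungraded theorem \cite[Prop.\ 8.1]{ortwal}, filtering $L_{\cG}$ by internal degree and using that each associated graded piece is built functorially (as a representation-up-to-homotopy / VB-groupoid datum over the body $G\rightrightarrows M$) from Morita-invariant input.

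The main obstacle is the Morita-invariance part, and specifically producing an honest $L_\infty$ quasi-isomorphism rather than a mere isomorphism on cohomology. Weak equivalences do not pull back vector fields or sections on the nose, so the comparison must pass through the nerve (or an intermediary pullback groupoid), and the higher $L_\infty$ brackets produced by transfer must be shown to respect the homological degree and the internal graded-manifold degree \emph{simultaneously}. Keeping these two gradings separate while invoking the perturbation lemma, and checking that the homotopies used in the ungraded case extend $\sym\mathbf{V}$-linearly, is where I expect the real work to lie; the dgla axioms themselves should be entirely routine once the sign conventions are fixed.
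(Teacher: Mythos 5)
Your proposal is correct and follows essentially the same route as the paper, whose entire proof is the single sentence that the result is ``analogous to the classical case presented in \cite{ortwal}''; your sketch simply makes explicit the structural facts (closure of multiplicative fields under the commutator, invariance of $[X,a^r]$, and the transfer argument for Morita invariance via \cite{hep:vec} and \cite[Prop.~8.1]{ortwal}) that the authors leave implicit. If anything, your version is more detailed than the paper's.
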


\begin{proof}
    This is analogous to the classical case presented in \cite{ortwal}.
\end{proof}

\begin{prop}\label{prop:MC}
    Let $\cG\rightrightarrows\cM$ be an $n$-graded Lie groupoid. Then there is a canonical one to one correspondence between: 
    \begin{itemize}
        \item Maurer-Cartan elements of $(L_\cG, d, [\cdot, \cdot])$.
        \item Quasi $Q$-groupoid structures on $\cG\rightrightarrows\cM$.
    \end{itemize}
\end{prop}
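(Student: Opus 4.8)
The plan is to unwind the Maurer-Cartan equation $dm+\tfrac{1}{2}[m,m]=0$ for a degree $1$ element of $L_\cG$ and to check that its two components are precisely the defining equations of Definition \ref{def:qqgpd}. A degree $1$ element is a pair $m=(\qq,Q)$ with $\qq\in\Gamma^2\cA$ and $Q\in\fX^{1,1}_{mul}(\cG)$, since $L^1_\cG=\Gamma^2\cA\oplus\fX^{1,1}_{mul}(\cG)$; the candidate bijection is then simply $m=(\qq,Q)\mapsto(Q,\qq)$, which is manifestly canonical and invertible. So the entire content is to verify that $m$ solves the Maurer-Cartan equation exactly when $(Q,\qq)$ satisfies $Q^2=\qq^l-\qq^r$ and $[Q,\qq^r]=0$.

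First I would compute $dm$. According to Proposition \ref{prop:dgla} the differential is $d(a)=a^r-a^l$ on the $\Gamma^\bullet\cA$ summand and is zero on the $\fX^{1,\bullet}_{mul}(\cG)$ summand, so $dm=(0,\ \qq^r-\qq^l)$ inside $L^2_\cG=\Gamma^3\cA\oplus\fX^{1,2}_{mul}(\cG)$. Next I would expand $[m,m]=[\qq,\qq]+[\qq,Q]+[Q,\qq]+[Q,Q]$ term by term. The bracket $[\qq,\qq]$ vanishes since the dgla bracket of two sections of $\cA$ is defined to be zero, $[Q,Q]=2Q^2$ is the graded self-commutator of the degree $1$ vector field $Q$, and the two mixed terms add rather than cancel: both $\qq$ and $Q$ have dgla-degree $1$, so graded antisymmetry gives $[\qq,Q]=[Q,\qq]$, whence $[\qq,Q]+[Q,\qq]=2[Q,\qq]=2\,[Q,\qq^r]\circ\gu$. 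Thus $\tfrac{1}{2}[m,m]=\big([Q,\qq^r]\circ\gu,\ Q^2\big)$.

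Adding the two contributions, the Maurer-Cartan equation splits along the two summands of $L^2_\cG$: the $\fX^{1,2}_{mul}(\cG)$-component reads $(\qq^r-\qq^l)+Q^2=0$, that is $Q^2=\qq^l-\qq^r$, and the $\Gamma^3\cA$-component reads $[Q,\qq^r]\circ\gu=0$. To identify the latter with the condition $[Q,\qq^r]=0$, I would use that the commutator of a multiplicative and a right invariant vector field is again right invariant, so $[Q,\qq^r]$ is right invariant; under the identification $\Gamma\cA\cong\{\text{right invariant vector fields}\}$ such a field vanishes if and only if its restriction to units does. Hence the two components together are equivalent to $(Q,\qq)$ being a quasi $Q$-groupoid structure, which establishes the correspondence.

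The computation is short, so the only point requiring care is the sign and factor bookkeeping in expanding $[m,m]$---specifically confirming that the mixed brackets reinforce to give the factor $2$, and that the sign in $d(a)=a^r-a^l$ matches the sign $\qq^l-\qq^r$ appearing in Definition \ref{def:qqgpd}. Everything else is a direct reading of Proposition \ref{prop:dgla}, so I anticipate no genuine obstacle beyond this bookkeeping.
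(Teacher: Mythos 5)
Your proposal is correct and follows essentially the same route as the paper: both unwind the Maurer--Cartan equation for $X=\qq+Q\in L^1_\cG$ and match its two components against the defining equations $Q^2=\qq^l-\qq^r$ and $[Q,\qq^r]=0$ of Definition \ref{def:qqgpd}. Your version merely spells out the bookkeeping (the vanishing of $[\qq,\qq]$, the factor $2$ from the mixed brackets, and the identification of $[Q,\qq^r]\circ\gu=0$ with $[Q,\qq^r]=0$ via right-invariance) that the paper leaves implicit.
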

\begin{proof}
    Recall that a \emph{Maurer-Cartan element} on a dgla is a degree $1$ element $X$ satisfying $dX+\frac{1}{2}[X,X]=0.$ In our case,  $L^1_\cG=\Gamma^2\cA\oplus\fX_{mul}^{1,1}(\cG)$ then $X=\qq+Q$ and we have that the Maurer-Cartan equation decomposes into 
    $$0=\qq^r-\qq^l+\frac{1}{2}[Q,Q]=\qq^r-\qq^l+Q^2 \quad\text{and}\quad 0=[Q, \qq]=[Q,\qq^r]\circ \gu$$
    which are exactly the equations defining a quasi $Q$-groupoid structure. The other direction is analogous. 
\end{proof}

\begin{rema}[Gauge transformations]
    The degree $0$ elements on a dgla act by \emph{gauge transformations} on Maurer-Cartan elements, see e.g. \cite[$\S 5.1$]{jonas:intro}. In our case,  $L_\cG^0=\Gamma^1\cA\oplus\fX^{1,0}_{mul}(\cG)$ so the gauge transformation of $X\in \fX^{1,0}_{mul}(\cG)$ is just a usual transformation by a groupoid diffeomorphism and therefore expected. But for a $b\in\Gamma^1\cA$ we get the gauge tranformation
    $$e^b\cdot(\qq,Q)=(\qq+[Q,b^r]\circ \gu-\frac{1}{2}[b,b],\ Q+b^l-b^r)\quad\text{for}\quad (\qq,Q)\in L_\cG^1.$$
    In \cite{bru:hom}, $Q$-algebroids with $Q=[b,\cdot]$ for $b\in\Gamma^1\cA$ where studied. In our language, those are the gauge equivalent to zero with $[b,b]=0$.  
\end{rema}
Some immediate and interesting consequences of the above propositions are the following.

\begin{coro}
Let $\cG\rightrightarrows\cM$ be an $n$-graded Lie groupoid and $(Q,q)$ a quasi $Q$-groupoid structure. Small deformations of the quasi $Q$-groupoid structure $(Q,q)$ are controlled by the twisted dgla $(L_\cG, d+[Q+q,\cdot], [\cdot,\cdot])$. 
\end{coro}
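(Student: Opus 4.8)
The plan is to recognize this as an instance of the standard deformation-theoretic fact that a Maurer--Cartan element of a dgla controls its own deformations after twisting the differential. By Proposition \ref{prop:MC} the quasi $Q$-groupoid structure $(Q,\qq)$ corresponds to a Maurer--Cartan element $\xi=Q+\qq\in L^1_\cG$, so I would phrase everything in terms of $\xi$. The first step is to check that $d_\xi:=d+[\xi,\cdot\,]$ makes $(L_\cG, d_\xi, [\cdot,\cdot])$ into a dgla. That $d_\xi$ is a degree $1$ derivation of the Schouten bracket is immediate, since $d$ is one by Proposition \ref{prop:dgla} and $[\xi,\cdot\,]$ is one by the graded Jacobi identity. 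The nontrivial point is $d_\xi^2=0$: expanding $d_\xi^2=d^2+d\circ[\xi,\cdot\,]+[\xi,\cdot\,]\circ d+[\xi,[\xi,\cdot\,]]$ and using $d^2=0$, the derivation property of $d$, and the identity $[\xi,[\xi,\cdot\,]]=\tfrac12[[\xi,\xi],\cdot\,]$ (a consequence of graded Jacobi applied to the odd element $\xi$), one finds $d_\xi^2=[\,d\xi+\tfrac12[\xi,\xi],\,\cdot\,]$, which vanishes precisely because $\xi$ satisfies the Maurer--Cartan equation $d\xi+\tfrac12[\xi,\xi]=0$.

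The second step is to identify the nearby quasi $Q$-groupoid structures with the Maurer--Cartan elements of this twisted dgla. For a small degree $1$ element $\eta\in L^1_\cG$, I would substitute $\xi+\eta$ into the Maurer--Cartan equation and expand, using that $[\xi,\eta]=[\eta,\xi]$ for degree $1$ elements. The terms $d\xi+\tfrac12[\xi,\xi]$ cancel by the hypothesis on $\xi$, leaving $d\eta+[\xi,\eta]+\tfrac12[\eta,\eta]=d_\xi\eta+\tfrac12[\eta,\eta]=0$. Hence $\xi+\eta$ is again a Maurer--Cartan element of $(L_\cG,d,[\cdot,\cdot])$---equivalently, by Proposition \ref{prop:MC}, a quasi $Q$-groupoid structure---if and only if $\eta$ is a Maurer--Cartan element of $(L_\cG,d_\xi,[\cdot,\cdot])$. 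This is exactly the statement that deformations of $(Q,\qq)$ are governed by the twisted dgla.

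The computations here are entirely formal, so I do not anticipate a genuine obstacle; the only thing to watch carefully is the Koszul sign bookkeeping in the graded setting, in particular the fact that the bracket is symmetric rather than antisymmetric on degree $1$ elements, which is what makes the cross term appear as $[\xi,\eta]$ (without a factor of $\tfrac12$) in the expansion above.
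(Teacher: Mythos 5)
Your argument is correct and is precisely the standard twisting computation that the paper itself invokes without proof (it simply refers the reader to the deformation-theory literature for this fact). Both the verification that $d+[Q+q,\cdot\,]$ squares to zero via the Maurer--Cartan equation and the identification of nearby structures with Maurer--Cartan elements of the twisted dgla are exactly what is intended, so there is nothing to add.
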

For more detail on $L_\infty$-algebras and deformations see \cite{jonas:intro} and references therein.

\begin{coro}\label{cor:mor}
    Quasi $Q$-groupoid structures can be transported along Morita equivalences of graded Lie groupoids. 
\end{coro}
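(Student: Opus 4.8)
The plan is to deduce the statement directly from the two preceding propositions together with the homotopy invariance of the Maurer--Cartan moduli under $L_\infty$ quasi-isomorphisms. By Proposition \ref{prop:MC}, a quasi $Q$-groupoid structure on an $n$-graded Lie groupoid $\cG\rightrightarrows\cM$ is the same datum as a Maurer--Cartan element of the dgla $L_\cG$. Hence the problem of transporting quasi $Q$-structures along a Morita equivalence $\cG\simeq\cH$ reduces entirely to transporting Maurer--Cartan elements between $L_\cG$ and $L_\cH$.

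First I would invoke Proposition \ref{prop:dgla}: a Morita equivalence of $n$-graded Lie groupoids produces an $L_\infty$ quasi-isomorphism between $L_\cG$ and $L_\cH$, obtained by inverting, up to homotopy, the zig-zag of weak equivalences $\cG\leftarrow\cK\to\cH$ realising the Morita equivalence. Next I would recall the standard fact from homotopical deformation theory (see \cite{jonas:intro}) that any $L_\infty$ quasi-isomorphism $F=(F_n)_{n\geq 1}$ sends Maurer--Cartan elements to Maurer--Cartan elements via the pushforward
$$ F_*(x)=\sum_{n\geq 1}\frac{1}{n!}F_n(x,\dots,x), $$
and that this assignment descends to a bijection between the moduli of Maurer--Cartan elements modulo gauge equivalence. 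Composing the bijection of Proposition \ref{prop:MC} for $\cG$ with $F_*$ and with the inverse of the bijection of Proposition \ref{prop:MC} for $\cH$ then yields the desired transport of quasi $Q$-groupoid structures, well defined up to the gauge equivalence described in the preceding remark.

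The main obstacle is to ensure that the general homotopy-invariance machinery genuinely applies in our setting, which comes down to two points. The first is the well-definedness of the pushforward sum $F_*(x)$: because $L_\cG$ is assembled from sections of $\cA$ and multiplicative vector fields on an $n$-manifold, its internal $\bN$-grading is bounded, and since $F_n$ raises internal degree with $n$ while the target $L_\cH^1$ is bounded, the terms $F_n(x,\dots,x)$ vanish for $n$ large; thus the sum is effectively finite and $F_*$ is well defined for $x=\qq+Q$. The second, and more substantive, point is the invertibility of $L_\infty$ quasi-isomorphisms, needed to traverse the zig-zag direction of the Morita equivalence; this is again standard, but it is precisely the input that upgrades the mere existence of a quasi-isomorphism furnished by Proposition \ref{prop:dgla} into an honest bijective correspondence of structures.
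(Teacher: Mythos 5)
Your argument is correct and follows essentially the same route as the paper: the authors likewise combine Proposition \ref{prop:MC} (quasi $Q$-structures as Maurer--Cartan elements) with Proposition \ref{prop:dgla} (Morita equivalences give $L_\infty$ quasi-isomorphisms) and then cite the fact that $L_\infty$ quasi-isomorphisms induce bijections between moduli sets of Maurer--Cartan elements (their reference is \cite[Corollary A.16]{bone:1shp}). Your additional remarks on the finiteness of the pushforward sum and the homotopy invertibility of quasi-isomorphisms merely make explicit what that citation packages.
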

\begin{proof}
    This follows from the fact that $L_\infty$ quasi-isomorphisms between strict Lie 2-algebras induce bijections between moduli sets of Maurer-Cartan elements, see \cite[Corollary A.16]{bone:1shp}.
\end{proof}

Observe that the concept of $Q$-groupoid is not Morita invariant. More concretely, let $(\cG\rightrightarrows \cM, Q)$ be an $n$-graded Q-groupoid and $\cH\rightrightarrows\cN$ another $n$-graded Lie groupoid Morita equivalent to $\cG\rightrightarrows \cM$. Thus $\cH\rightrightarrows\cN$ inherits naturally a quasi $Q$-groupoid structure by the above Corollary but, in general, this is not automatically a $Q$-groupoid structure.

\subsection{Quasi QP-groupoids}\label{sec:pq} On a Lie groupoid $G\rightrightarrows M$, the dgla $\Gamma A\to \fX_{mul}(G)$ described in $\S\ref{sec:MC}$ can be extended to include multivector fields as explained in \cite{bone:1shp}. The underlying graded vector space is given by $V^{i-1}_G=\Gamma\wedge^{i+1}  A\oplus \fX^i_{mul}(G)$ with differential and bracket 
\begin{equation}\label{eq:dglaV}
    d(\pi+\Pi)=0\oplus \pi^r-\pi^l, \quad [\pi_1\oplus \Pi_1, \pi_2\oplus \Pi_2]=(-1)^k \Pi_1\cdot \pi_2-(-1)^{(k+1)l}\Pi_2\cdot \pi_1\oplus [\Pi_1, \Pi_2]
\end{equation}
where $\pi_1\oplus \Pi_1\in V_G^k,\ \pi_2\oplus \Pi_2\in V_G^l$ and $\Pi_1\cdot \pi_2\in\Gamma\wedge^{k+l+2} A$ denotes the unique section such that  $[\Pi_1, \pi^r_2]=(\Pi_1\cdot \pi_2)^r$. The importance of this dgla comes from the fact that its Maurer-Cartan elements are exactly \emph{quasi Poisson structures}, i.e. $\pi+\Pi\in V^1_G=\Gamma\wedge^3 A\oplus \fX^2_{mul}(G)$ such that 
\begin{equation}\label{eq:qP}
    \pi^l-\pi^r=\frac{1}{2}[\Pi, \Pi],\quad \text{and}\quad [\Pi, \pi^r]=0.
\end{equation}
This dgla extends to the graded context as we show now. Let $\cG\rightrightarrows \cM$ be an $n$-graded Lie groupoid.  Define $$\cV^{i-(n+1)}_\cG=\bigoplus_{(n+1)j+k=i}\Gamma^{k+1}\wedge^j\cA\oplus \fX^{j,k}_{mul}(\cG)$$ with differential and bracket given by the same formulas as before. It is easy to see that
$$\cV^1_\cG=C^{n+3}(\cM)\oplus C_{mul}^{n+2}(\cG)\oplus \Gamma^2\cA\oplus\fX^{1,1}_{mul}(\cG)\oplus \Gamma^{1-n}\wedge^2\cA\oplus \fX_{mul}^{2,-n}(\cG)\oplus\Gamma^{-2n}\wedge^3\cA\cdots.$$
By inspection one can see that its Maurer-Cartan elements include quasi $Q$-structures as well as degree $-n$ quasi Poisson structures. Moreover, we hve a natural notion of compatibility for two such structures. 

\begin{defi}\label{def:qpqgro}
    Let $\cG\rightrightarrows \cM$ be an $n$-graded Lie groupoid. A \emph{quasi $QP$-structure} is a $\qq\in\Gamma^2\cA, \ Q\in\fX^{1,1}_{mul}(\cG), \ \Pi\in\fX^{2,-n}_{mul}(\cG)$ and $\pi \in\Gamma^{-2n}\wedge^3\cA$ satisfying 
    $$Q^2=\qq^l-\qq^r,\quad \frac{1}{2}[\Pi,\Pi]=\pi^l-\pi^r, \quad\text{and}\quad [Q,\qq^r]=[\Pi,\pi^r]=[Q,\Pi]=[Q,\pi^r]= [\Pi, \qq^r]=0.$$
    When $Q=0$ and $\qq=0$ we say that $(\cG\rightrightarrows \cM, \Pi, \pi)$ is a \emph{degree $n$ quasi P-groupoid} and when $\qq=0$ and $\pi=0$ then $(\cG\rightrightarrows \cM, Q, \Pi)$ is a degree $n$ \emph{QP-groupoid} as studied in \cite{lacou}.
\end{defi}

\begin{rema}
    A general Maurer-Cartan element of the dgla $(\cV^\bullet_\cG, d, [\cdot,\cdot])$ is a multiplicative analogue of the homotopy Poisson structures studied in \cite{raj:hop}. In $\S\ref{subsec:catliebia}$ we study the degree $1$ case. For other strict versions of Poisson structure on graded groupoids see \cite{gra:gro} . 
\end{rema}

A particularly relevant case for us is given by assuming that $\pi=0$ and $\Pi$ is a non-degenerate bivector field, i.e. that $\Pi$ is the inverse of a symplectic structure. In this case, the vector field $Q$ is always hamiltonian, i.e. $Q=\{\Theta,\cdot\}$ for some $\Theta\in C^{n+1}_{mul}(\cG)$, see \cite{roy:on}. Therefore one can also impose the following condition on $q$.

\begin{defi}\label{def:deg2sym} A {\em degree $n$ symplectic quasi $Q$-groupoid} $(\cG\rightrightarrows \cM,\Pi,\Theta,\theta)$ is an $n$-graded Lie groupoid $\cG\rightrightarrows \cM$ equipped with $\Pi\in\fX^{2,-n}_{mul}(\cG)$ which is Poisson and non-degenerate, a $\Theta\in C_{mul}^{n+1}(\cG)$ and a $\theta\in C^{n+2}(\cM)$ satisfying
\begin{align} \frac{1}{2}\{\Theta,\Theta\}=\delta \theta=\mathtt{s}^*\theta-\mathtt{t}^*\theta\quad\text{and}\quad \{\Theta,\gt^*\theta\}=0; \label{eq:hommaseq}
\end{align} 
where $\{\cdot,\cdot\}$ is the Poisson bracket determined by $\Pi$.
\end{defi}

\begin{coro}
A degree $n$ symplectic quasi $Q$-groupoid $(\cG\rightrightarrows \cM,\Pi,\Theta,\theta)$ defines a quasi QP-groupoid with
\[ Q=\{\Theta, \cdot \} \quad \text{and}\quad  q^r=\{\mathtt{t}^*\theta, \cdot\}.\]
\end{coro}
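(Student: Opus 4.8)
The plan is to exhibit the quasi $QP$-structure of Definition \ref{def:qpqgro} by taking $\pi=0$, $Q=\{\Theta,\cdot\}$, and letting $\qq\in\Gamma^2\cA$ be the unique section determined by $\qq^r=\{\gt^*\theta,\cdot\}$; the work then consists in checking the five equations of Definition \ref{def:qpqgro}. First I would record the preliminary degree and multiplicativity bookkeeping. As a derivation, $Q=\{\Theta,\cdot\}$ coincides with the Schouten bracket $[\Pi,\Theta]$ of the two multiplicative multivector fields $\Pi\in\fX^{2,-n}_{mul}(\cG)$ and $\Theta\in C^{n+1}_{mul}(\cG)$, so by the dgla structure \eqref{eq:dglaV} it is again multiplicative; since the bracket determined by $\Pi$ lowers degree by $n$, a degree count gives $|Q|=(n+1)-n=1$, i.e. $Q\in\fX^{1,1}_{mul}(\cG)$, while $\gt^*\theta$ has degree $n+2$ and hence $\{\gt^*\theta,\cdot\}$ has degree $(n+2)-n=2$, consistent with $\qq\in\Gamma^2\cA$.

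The heart of the argument, and the step I expect to be the main obstacle, is the invariance lemma: because $\Pi$ is multiplicative and non-degenerate, for every $h\in C^\bullet(\cM)$ the Hamiltonian vector field $\{\gt^*h,\cdot\}$ is right-invariant while $\{\gs^*h,\cdot\}$ is left-invariant, and both are the right- and left-invariant extensions of one and the same section $\qq_h\in\Gamma\cA$. This is the graded counterpart of the classical fact about (symplectic) groupoids; under the identification $\cA\cong T^*[n]\cM$ furnished by non-degeneracy, $\qq_h$ is essentially $dh$. The two statements are tied together through the inversion map: from $\gs\circ\gi=\gt$ one gets $\gs^*\theta=\gi^*(\gt^*\theta)$, and combining $\gi_*\Pi=-\Pi$ (valid for a multiplicative bivector) with the identity $\gi_*(a^r)=-a^l$ exchanging right- and left-invariant vector fields, the two signs cancel and yield $\{\gs^*h,\cdot\}=(\qq_h)^l$. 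Applying this with $h=\theta$ produces the desired $\qq$, with $\qq^r=\{\gt^*\theta,\cdot\}$ and $\qq^l=\{\gs^*\theta,\cdot\}$.

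With $\qq$ in hand the remaining equations are derived-bracket bookkeeping. For the homological equation I would invoke the standard identity $Q^2=\tfrac12\{\{\Theta,\Theta\},\cdot\}$ for a Hamiltonian vector field of a degree $n+1$ function \cite{roy:on}; substituting the first relation in \eqref{eq:hommaseq}, namely $\tfrac12\{\Theta,\Theta\}=\gs^*\theta-\gt^*\theta$, gives
\[
Q^2=\{\gs^*\theta,\cdot\}-\{\gt^*\theta,\cdot\}=\qq^l-\qq^r.
\]
For the compatibility $[Q,\qq^r]=0$, since $h\mapsto\{h,\cdot\}$ sends the Poisson bracket to the commutator of vector fields, one has $[Q,\qq^r]=\{\{\Theta,\gt^*\theta\},\cdot\}$, which vanishes by the second relation in \eqref{eq:hommaseq}.

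It remains to verify the quasi-Poisson part of Definition \ref{def:qpqgro} with $\pi=0$. The equation $\tfrac12[\Pi,\Pi]=\pi^l-\pi^r$ holds because $\Pi$ is Poisson (so $[\Pi,\Pi]=0$) and $\pi=0$; the brackets $[\Pi,\pi^r]$ and $[Q,\pi^r]$ vanish trivially since $\pi^r=0$; and finally $[Q,\Pi]=0$ and $[\Pi,\qq^r]=0$ because both $Q=\{\Theta,\cdot\}$ and $\qq^r=\{\gt^*\theta,\cdot\}$ are Hamiltonian vector fields of the Poisson manifold $(\cG,\Pi)$ and hence preserve $\Pi$. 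This establishes all the defining identities, so $(\cG\rightrightarrows\cM,Q,\qq,\Pi,\pi=0)$ is a quasi $QP$-groupoid.
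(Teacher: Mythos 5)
Your proof is correct and rests on exactly the two facts the paper's own (one-line) proof invokes: that Hamiltonian vector fields preserve the Poisson structure and that $X_{\{f,g\}}=[X_f,X_g]$, from which $Q^2=\tfrac12\{\{\Theta,\Theta\},\cdot\}$ and $[Q,\qq^r]=\{\{\Theta,\gt^*\theta\},\cdot\}$ reduce everything to \eqref{eq:hommaseq}. The only addition is that you spell out the right-/left-invariance of $\{\gt^*\theta,\cdot\}$ and $\{\gs^*\theta,\cdot\}$, which the paper leaves implicit in the very statement $q^r=\{\gt^*\theta,\cdot\}$; this is a welcome clarification but not a different route.
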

\begin{proof}
    Follows directly from the fact that that Hamiltonian vector fields preserves the Poisson structure together with $X_{\{f,g\}}=[X_f,X_g]$ for $f,g\in C(\cG)$. 
\end{proof}

\begin{prop}\label{prop:cot}
    Let $\cG\rightrightarrows\cM$ be an $n$-graded Lie groupoid with $n$-graded Lie algebroid $\cA$. 
    \begin{enumerate}
        \item For all $l>n, (T^*[l]\cG\rightrightarrows\cA^*[l],\Pi_{can})$ is a degree $l$ symplectic groupoid.
        \item There is a one to one correspondence between:
    \begin{itemize}
        \item Maurer-Cartan elements in $(\cV^\bullet_\cG, d, [\cdot,\cdot])$.
        \item Pairs $\Theta\in C_{mul}^{n+2}(T^*[n+1]\cG)$  and $\theta\in C^{n+3}(\cA^*[n+1])$ satisfying \eqref{eq:hommaseq}.
    \end{itemize}
    \end{enumerate}
\end{prop}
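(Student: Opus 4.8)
The plan is to prove part (1) first and then deduce part (2) from it, together with the canonical dictionary translating multivector fields on $\cG$ into fibrewise-polynomial functions on the shifted cotangent $T^*[n+1]\cG$. Under this dictionary the whole statement becomes a transcription of the Maurer-Cartan formalism of $\S\ref{sec:MC}$ applied to the symplectic groupoid produced in part (1): passing from $\cG$ to $T^*[n+1]\cG$ turns the (possibly degenerate) multiplicative homotopy Poisson data encoded by a Maurer-Cartan element of $\cV^\bullet_\cG$ into honest Hamiltonian data $(\Theta,\theta)$ on a symplectic groupoid.

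For part (1), I would lift the classical theorem---that the cotangent bundle of a Lie groupoid is a symplectic groupoid over the dual of its Lie algebroid---to the shifted graded setting. The multiplication on $T^*[l]\cG$ is the one whose graph is the conormal to the graph of $\gm$ (placed in degree $l$), the maps $\gs,\gt\colon T^*[l]\cG\to\cA^*[l]$ are defined as in the classical cotangent groupoid, and the groupoid axioms can be checked directly in the coordinates of Example \ref{shift-tan-cotan} adapted to the cotangent. Because $l>n$, the momenta dual to the coordinates of $\cG$ all sit in positive degrees $l-n,\dots,l$, so $T^*[l]\cG$ and $\cA^*[l]$ are honest objects of $\man{l}$ with body groupoid the original $G\rightrightarrows M$; this is exactly what the hypothesis $l>n$ secures. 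Finally, the canonical symplectic structure $\Pi_{can}$ has degree $l$ by construction, and its multiplicativity follows from the standard argument that it restricts to zero on the Lagrangian defining the multiplication; hence $(T^*[l]\cG\rightrightarrows\cA^*[l],\Pi_{can})$ is a degree $l$ symplectic groupoid.

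For part (2), the tool is the canonical isomorphism (as in \cite{roy:on}) between multivector fields on a graded manifold and fibrewise-polynomial functions on its shifted cotangent, intertwining the Schouten and canonical Poisson brackets. Taking $l=n+1$, an element of $\fX^{j,k}(\cG)$ corresponds to a momentum-degree-$j$ function on $T^*[n+1]\cG$ of degree $k+j(n+1)$, while a degree $m$ section of $\wedge^j\cA$ corresponds to a fibre-degree-$j$ function on $\cA^*[n+1]$ of degree $m+j(n+1)$. A term-by-term count against the displayed form of $\cV^1_\cG$ shows that the multiplicative multivector summands assemble into a single $\Theta\in C^{n+2}_{mul}(T^*[n+1]\cG)$ and the $\wedge^\bullet\cA$-summands into a single $\theta\in C^{n+3}(\cA^*[n+1])$, which sets up the claimed bijection on underlying data. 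Under the dictionary the Schouten self-bracket of the multivector part becomes $\{\Theta,\Theta\}$, the differential $d(\pi)=\pi^r-\pi^l$ of \eqref{eq:dglaV} becomes $\delta\theta=\gs^*\theta-\gt^*\theta$ (via the classical identity that the source and target of the cotangent groupoid send the linear function $\pi$ on $\cA^*[n+1]$ to the Hamiltonians of $\pi^l$ and $\pi^r$), and the cross terms $\Pi\cdot\pi$ become $\{\Theta,\gt^*\theta\}$. Hence the Maurer-Cartan equation $dX+\tfrac{1}{2}[X,X]=0$ splits exactly into the two equations \eqref{eq:hommaseq}, and by part (1) the resulting $(\Theta,\theta)$ is a degree $n+1$ symplectic quasi $Q$-groupoid structure on $T^*[n+1]\cG$ in the sense of Definition \ref{def:deg2sym}.

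The main obstacle is the multiplicative refinement of the dictionary: proving that multiplicative multivector fields on $\cG$ correspond precisely to multiplicative functions on $T^*[n+1]\cG$, and that the twisted product $\Pi\cdot\pi$ defined implicitly by $[\Pi,\pi^r]=(\Pi\cdot\pi)^r$ reproduces the relevant component of $\{\Theta,\gt^*\theta\}$. Tracking the Koszul signs in \eqref{eq:dglaV} and in the graded Poisson bracket, and checking that the momentum-degree components of $\tfrac{1}{2}\{\Theta,\Theta\}=\delta\theta$ recover $Q^2=\qq^l-\qq^r$ and $\tfrac{1}{2}[\Pi,\Pi]=\pi^l-\pi^r$ (compare \eqref{eq:qP}) together with the intermediate identities, is where the bookkeeping concentrates. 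The converse direction---recovering a Maurer-Cartan element from a pair $(\Theta,\theta)$---is then immediate by running the dictionary backwards, using that $T^*[n+1]\cG$ is symplectic so that $\Theta$ and $\theta$ determine $Q=\{\Theta,\cdot\}$ and $\qq^r=\{\gt^*\theta,\cdot\}$ as in the corollary following Definition \ref{def:deg2sym}.
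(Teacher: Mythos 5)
Your proposal is correct and follows essentially the same route as the paper: part (1) is obtained as the graded analogue of the classical cotangent symplectic groupoid (the paper simply cites \cite[$\S 11.3$]{mac:book} and notes the proof is identical), and part (2) rests on the same degree count identifying $\cV^1_\cG$ with $C^{n+3}(\cA^*[n+1])\oplus C^{n+2}_{mul}(T^*[n+1]\cG)$ via the multivector-field/cotangent-function dictionary, after which the Maurer--Cartan equation visibly decomposes into \eqref{eq:hommaseq}. The paper's proof is merely terser, leaving the multiplicative refinement of the dictionary and the sign bookkeeping you flag as implicit.
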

\begin{proof}
    The first statement is a graded analog of the classical result and the proof is identical, see e.g. \cite[$\S 11.3$]{mac:book}. For the second statement, notice that
    \begin{equation*}
        \begin{split}
           \cV^1_\cG=&\bigoplus_{(n+1)j+k=n+2}\Gamma^{k+1}\wedge^j\cA\oplus \fX^{j,k}_{mul}(\cG)\\
           =&\Big(\bigoplus_{(n+1)j+k=n+3}\Gamma^{k}\wedge^j\cA\Big)\oplus\Big(\bigoplus_{(n+1)j+k=n+2} \fX^{j,k}_{mul}(\cG)\Big) \\
           =& C^{n+3}(\cA^*[n+1])\oplus C^{n+2}_{mul}(T^*[n+1]\cG).
        \end{split}
    \end{equation*}
Thus, elements in $\cV^1$ are identified with pairs $(\theta, \Theta)$. Under this identification, the Maurer-Cartan equation $$d(\theta, \Theta)+\frac{1}{2}[(\theta, \Theta), (\theta, \Theta)]=0$$ decomposes into the equations \eqref{eq:hommaseq} because $d\theta=\theta^r-\theta^l=\gt^*\theta-\gs^*\theta.$
\end{proof}

\subsection{Morphisms of quasi Q-groupoids}\label{sec:mor} Here we show that quasi Q-groupoids naturally form a strict 2-category. We will show in $\S\ref{subsec:l2alg}$ that this strict $2$-category extends the category of semistrict Lie 2-algebras \cite{bacr}.

\subsubsection{1-morphisms} Given two quasi Q-groupoids $(\cG\rightrightarrows\cM ,Q_\cG,\qq_\cG )$ and $(\cH\rightrightarrows\cN ,Q_\cH,\qq_\cH)$, a morphism (or 1-morphism) between them is a pair $(F,\tau)$ consisting of a morphism of graded Lie groupoids $F:\mathcal{G} \rightarrow \mathcal{H} $ together with a natural transformation $\tau: T[1]F\circ Q_{\mathcal{G}}\Rightarrow Q_{\mathcal{H}}\circ F    $ as in the following diagram
\begin{center} 
\begin{tikzpicture}
\matrix[matrix of nodes,column sep=2cm] (cd)
{
   $\mathcal{G}$  & $T[1]\mathcal{H}$  \\
};
\draw[->] (cd-1-1) to[bend left=50] node[label=above:${T[1]F\circ Q_{\mathcal{G}}}$] (U) {} (cd-1-2);
\draw[->] (cd-1-1) to[bend right=50,name=D] node[label=below:$Q_{\mathcal{H}}\circ F$] (V) {} (cd-1-2);
\draw[double,double equal sign distance,-implies,shorten >=10pt,shorten <=10pt] 
  (U) -- node[label=right:$\tau $] {} (V);
\end{tikzpicture}
\end{center}   
such that $\pi_{F^*T[1]\cH}\circ\tau=\gu_\cG$ and the following coherence law is satisfied: the two natural transformations  $0 \Rightarrow  Q^2_{\mathcal{H}}\circ  F$ we can define canonically in this situation coincide, i.e. 
\begin{equation}\label{eq:com-1mor}
    \qq_\cH\diamond 1_{F}=\left(1_{T[1]Q_\cH} \diamond \tau \right)\bullet \left( T[1]\tau \diamond 1_{Q_\cG} \right)\bullet \left( 1_{T[2]F}\diamond \qq_\cG\right);
\end{equation}
where $\diamond$ and $\bullet $ denote, respectively, the horizontal and vertical compositions of natural transformations.

To make sense of \eqref{eq:com-1mor}  notice that $0=T[2]F\circ 0_\cG=0_\cH\circ F$ and that for a vector field $Q\in\fX^{1,k}(\cM)$ we get that $T[1]Q:T[1]\cM\to T[1]T[1]\cM$ identifies $T[1]Q=\cL_Q$ with the Lie derivative because  $C_{T[1]\cM}\cong \Omega(\cM)$. However, $T[1]T[1]\cM$ is a double vector bundle with core $T[2]\cM$ and we get $Q^2=T[1]Q\circ Q$ restricted to the core.

With these identifications in mind, the left hand side in \eqref{eq:com-1mor} is the natural transformation
\begin{center} 
\begin{tikzpicture}
\matrix[matrix of nodes,column sep=2cm] (cd)
{
   $\mathcal{G}$  & $\mathcal{H}$ & $T[2]\mathcal{H}$ \\
};
\draw[->] (cd-1-1) to[bend left=50] node[label=above:$F$] (U) {} (cd-1-2);
\draw[->] (cd-1-1) to[bend right=50,name=D] node[label=below:$F$] (V) {} (cd-1-2);
\draw[double,double equal sign distance,-implies,shorten >=10pt,shorten <=10pt] 
  (U) -- node[label=right:$1_F$] {} (V);
    \draw[->] (cd-1-2) to[bend left=50] node[label=above:$0_{\mathcal{H}}$] (UF) {} (cd-1-3);
    \draw[->] (cd-1-2) to[bend right=50,name=D] node[label=below:$Q^2_{\mathcal{H}}$] (VF) {} (cd-1-3);
\draw[double,double equal sign distance,-implies,shorten >=10pt,shorten <=10pt] 
  (UF) -- node[label=right:${\qq_\mathcal{H}} $] {} (VF);

\end{tikzpicture}
\end{center}   
while the right hand side in \eqref{eq:com-1mor} is the composition of the natural transformations
\[ \begin{tikzcd}
{\cG} \arrow[d, Rightarrow, no head]\arrow[rrrr, "{0_{\mathcal{G}}}"] &&\arrow[d, Rightarrow, "{\qq_\mathcal{G}}"']&& {T[2]\mathcal{G}}\arrow[d, Rightarrow, no head] \arrow[rrrr, "{T[2]F}"] &&\arrow[d, Rightarrow, "{1_{T[2]F}}"] &&{T[2]\cH}\arrow[d, Rightarrow, no head]\\
{\cG} \arrow[d, Rightarrow, no head]\arrow[rr, "{Q_\mathcal{G}}"] &\arrow[d, Rightarrow, "{1_{Q_\cG}}"]&{T[1]\cG}\arrow[d, Rightarrow, no head]\arrow[rr, "{T[1]Q_\mathcal{G}}"']&{}&{T[2]\cG} \arrow[rrrr, "{T[2]F}"'] &\arrow[d, Rightarrow, "{T[1]\tau}"]& {}&&{T[2]\cH}\arrow[d, Rightarrow, no head]\\
{\cG}\arrow[d, Rightarrow, no head] \arrow[rr, "{Q_\cG}"'] &{}&{T[1]\cG}\arrow[rrrr, "{T[1]F}"]&{}\arrow[d, Rightarrow, "{\tau}"]&{}&{}& {T[1]\cH}\arrow[d, Rightarrow, no head]\arrow[rr, "{T[1]Q_\cH}"]&{}\arrow[d, Rightarrow, "{1_{T[1]Q_H}}"]&{T[2]\cH}\arrow[d, Rightarrow, no head]\\
{\cG}\arrow[rrrr, "{F}"']&{}&{}&{}& {\cH}\arrow[rr, "{Q_\cH}"']&{}&{T[1]\cH}\arrow[rr,"{T[1]Q_\cH}"' ]&{}&{T[2]\cH.}
\end{tikzcd}\]

We will see later several situations that are clarified because of the introduction of this concept. Let us start by relating $1$-morphisms with gauge transformations.

\begin{prop}\label{prop:1morgau}
    Let $(1,\tau):(\cG\rightrightarrows \cM, Q, \qq)\to (\cG\rightrightarrows \cM, Q', \qq')$ be a $1$-morphism. Then $\tau$ is given by $b\in\Gamma^1\cA$ and the quasi Q-structures are gauge equivalent, i.e. $e^b\cdot(\qq,Q)=(\qq',Q').$
\end{prop}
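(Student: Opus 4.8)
The plan is to split the problem along the two pieces of a $1$-morphism: the naturality of $\tau$ fixes the vector-field component while the coherence law \eqref{eq:com-1mor} fixes the $\qq$-component, and each should reproduce one slot of the gauge action $e^b\cdot(\qq,Q)$ from the preceding remark. First I would identify $\tau$ with a section $b\in\Gamma^1\cA$. Since $F=\id$, the natural transformation $\tau\colon Q\Rightarrow Q'$ is a graded map $\tau\colon\cM\to T[1]\cG$ subject to the equations \eqref{eq:NT} for the groupoid $T[1]\cG\rightrightarrows T[1]\cM$, namely $T[1]\gs\circ\tau=Q_0$, $T[1]\gt\circ\tau=Q'_0$ and the multiplicativity relation, together with $\pi_\cG\circ\tau=\gu$. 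The identity natural transformation $1_Q$ is exactly the unit $T[1]\gu\circ Q_0$, so setting $b:=\tau-1_Q$ (the difference taken fibrewise in $T[1]\cG\to\cG$) gives a map with $\pi_\cG\circ b=\gu$ and $T[1]\gs\circ b=Q_0-Q_0=0$; hence $b$ lands in $\ker(T[1]\gs)$ over the units, i.e. $b\in\Gamma^1\cA$. This is the degree $1$ analogue of the identification in Proposition~\ref{prop:NT}.

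Next I would recover $Q'=Q+b^l-b^r$. Because $T[1]\cG\rightrightarrows T[1]\cM$ is a VB-groupoid and every structure map in \eqref{eq:NT} is fibrewise linear, subtracting $1_Q$ from $\tau$ turns it into a natural transformation from the zero section $0_\cG$ to the difference $Q'-Q$ (a multiplicative vector field over $Q'_0-Q_0$). The computation \eqref{eq1}, now in degree $1$, then forces $Q'-Q=b^l-b^r$, which is the $Q$-slot of $e^b\cdot(\qq,Q)$ and is consistent with $\rho(b)=T\gt\circ b=Q'_0-Q_0$.

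It remains to obtain $\qq'=\qq+[Q,b^r]\circ\gu-\tfrac12[b,b]$ from the coherence law. With $F=\id$ all the identity $1$-morphisms drop out of \eqref{eq:com-1mor} and it reduces to
\begin{equation*}
\qq'=\bigl(1_{T[1]Q'}\diamond\tau\bigr)\bullet\bigl(T[1]\tau\diamond 1_Q\bigr)\bullet\qq ,
\end{equation*}
an identity of natural transformations $0_\cG\Rightarrow (Q')^2$, i.e. of degree $2$ sections of $\cA$ by Proposition~\ref{prop:NT}. I would then read each factor as a core section through the difference trick of that proposition: vertical composition $\bullet$ becomes addition of core sections, while the horizontal whiskerings $\diamond$ by $Q$ and $Q'$, combined with the tangent prolongation $T[1]\tau$ (which acts as a Lie derivative along $b$), produce the term $[Q,b^r]\circ\gu$; finally the quadratic cross-term created when the two whiskered copies of $\tau$ are glued yields $-\tfrac12[b,b]$. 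Collecting the three contributions gives precisely the $\qq$-slot of $e^b\cdot(\qq,Q)$, completing the proof.

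The main obstacle is this last translation. The natural transformations live in the iterated prolongation $T[1]T[1]\cG$, a double vector bundle with core $T[2]\cG$, and one must track carefully how $T[1]\tau$, the whiskerings $\diamond$ and the vertical gluings $\bullet$ interact inside this double structure. In particular, obtaining the correct sign and the factor $\tfrac12$ in front of $[b,b]$ requires identifying the cross-term of the two copies of $b$ with the Schouten bracket; once these identifications are in place the comparison with the gauge action formula is routine.
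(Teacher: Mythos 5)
Your proposal is correct and follows essentially the same route as the paper: identify $\tau$ (viewed as a transformation $0_\cG\Rightarrow Q'-Q$) with a section $b\in\Gamma^1\cA$, extract $Q'-Q=b^l-b^r$ from naturality via the argument of \eqref{eq1}, and then translate the coherence law \eqref{eq:com-1mor} using the fact that for $F=\id$ horizontal composition is composition of derivations and vertical composition is addition. The final translation you defer is carried out in the paper as the operator identity $(\qq')^l-(\qq')^r=Q'\circ Q'$ expanded with $Q'=Q+b^l-b^r$, which produces the terms $[Q,b^l]-[Q,b^r]$ and $-\tfrac12[b,b]^l+\tfrac12[b,b]^r$ exactly as you predict.
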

\begin{proof}
By definition, a $1$-morphism $(1,\tau)$ is given by a natural transformation $\tau:Q\Rightarrow Q'$ that we can also view as a natural transformation $\tau:0_\cG\Rightarrow Q'-Q$. So it is determined by a map $\tau:\cM\to T[1]\cG$ satisfying  $T[1]\gs_\cG\circ \tau=0$ and  $\pi_{T[1]\cG}\circ\tau=\gu_\cG$ so $\tau$ is given by a degree $1$ section of the graded Lie algebroid $b\in\Gamma^1\cA.$ Moreover, the last equation in \eqref{eq:NT} implies that the following diagram is commutative
\begin{equation*}
    \begin{array}{c}
         \xymatrix{ & T[1]\mathcal{G}^{(2)} 
\ar[dr]^-{T[1] \mathtt{m} } & \\ \mathcal{G}\ar[ur]^-{(0_\cG,\tau \circ \mathtt{s})}\ar[dr]_-{(\tau \circ \mathtt{t},Q'-Q )} && T[1] \mathcal{G} \\
& T[1]\mathcal{G}^{(2)} 
\ar[ur]_-{T[1]\mathtt{m}}  &} 
    \end{array}
    \end{equation*}
Thus, using the same trick as in \eqref{NT}, we get
$Q'-Q=b^l-b^r.$ Since $(\qq,Q)$ and $(\qq',Q')$ are quasi $Q$-groupoids, we get 
\begin{equation}\label{aux:Mors}
\begin{split}
        (\qq')^l-(\qq')^r=&Q'\circ Q'=Q'\circ(b^l-b^r+Q) =Q'\circ(b^l-b^r)+(b^l-b^r+Q)\circ Q\\
        =&Q'\circ(b^l-b^r)+(b^l-b^r)\circ Q+\qq^l-\qq^r.   
    \end{split}
\end{equation}
Since $F=\id,$ the horizontal multiplication $\diamond$ of natural transformations corresponds to composition of vector fields while the vertical multiplication $\bullet$ is just the sum of vector fields. These observations imply that \eqref{eq:com-1mor} is equivalent to \eqref{aux:Mors}. Finally, observe that if we apply $Q'=b^l-b^r+Q$ in the last term of  \eqref{aux:Mors} we get the equation   
\begin{equation*}
    \begin{split}
    (\qq')^l-(\qq')^r=&(b^l-b^r+Q)\circ(b^l-b^r)+(b^l-b^r)\circ Q+\qq^l-\qq^r\\
    =&-\frac{1}{2}[b,b]^l-[b^r,b^l]+\frac{1}{2}[b,b]^r+[Q,b^l]-[Q,b^r]+\qq^l-\qq^r;
\end{split}
\end{equation*}
meaning that $(\qq',Q')=e^b\cdot(\qq,Q)$ as we want.
\end{proof}

\subsubsection{Composition of 1-morphisms} Let $(F_1,\tau_1):(\mathcal{G} ,Q_{\mathcal{G} },\qq_{\mathcal{G} }) \rightarrow (\mathcal{H} ,Q_{\mathcal{H} },\qq_{\mathcal{H} })$ and $(F_2,\tau_2):(\mathcal{H} ,Q_{\mathcal{H} },\qq_{\mathcal{H} }) \rightarrow (\mathcal{K} ,Q_{\mathcal{K} },\qq_{\mathcal{K} })$ be morphisms of quasi Q-groupoids. Then their composition is $(F_2\circ F_1,(\tau_2\diamond 1_{F_1})\bullet(1_{T[1]F_1}\diamond \tau_1))$, according to the diagram below
\[ \begin{tikzcd}                                                                 &  &  & {} \arrow[dd, "{1_{T[1]F_2}\diamond \tau_1}" description, Rightarrow]     &  &  &   \\                                                                   &  &  & {}  &  &  &   \\
\mathcal{G} \arrow[rrrrrr, "{T[1]F_2\circ T[1]F_1\circ Q_{\mathcal{G}}}", bend left=49, shift left=6] \arrow[rrrrrr, "{Q_{\mathcal{K}}\circ F_2\circ F_1}"', bend right=49, shift right=6] \arrow[rrrrrr, "{T[1]F_2\circ Q_{\mathcal{H}}\circ F_1}" description] &  &  & {} \arrow[dd, "{\tau_2 \diamond 1_{F_1}}" description, Rightarrow] &  &  & {T[1]\mathcal{K}} \\                                                   &  &  &  &  &  &                   \\
  &  &  & {}   &  &  &                  
\end{tikzcd} \]

\subsubsection{2-morphisms} Finally, given two 1-morphisms $(F,\tau),(F',\tau'):(\mathcal{G} ,Q_{\mathcal{G} },\qq_{\mathcal{G} }) \rightarrow (\mathcal{H} ,Q_{\mathcal{H} },\qq_{\mathcal{H} })$ a 2-morphism between them is a natural transformation $\Theta:F'\Rightarrow F$ such that
\[ \begin{tikzcd}                                                                     &  &  & {} \arrow[dddd, "{ (1_{Q_{\mathcal{H}}}\diamond \Theta)\bullet\tau'= \tau \bullet(T[1]\Theta\diamond 1_{Q_{\mathcal{G}}})}" description, Rightarrow] &  &  &                   \\                                         &  &  &   &  &  &                   \\
\mathcal{G} \arrow[rrrrrr, "Q_{\mathcal{H}}\circ F'", bend left=49, shift left=3] \arrow[rrrrrr, "{T[1]F\circ Q_{\mathcal{G}}}"', bend right=49, shift right=3] &  &  &       &  &  & {T[1]\mathcal{H}}\\                                 &  &  &    &  &  &                   \\
    &  &  & {}   &  &  &                  
\end{tikzcd} \]
Since the 2-morphisms are given by natural transformations with an additional property, we have that quasi Q-groupoids form a strict 2-category as claimed at the beginning of the section.

\section{Degree one quasi Q-groupoids: quasi LA-groupoids}\label{sec:1q-groupoids}
In this section we focus on $1$-graded quasi $Q$-groupoids. We show how these objects are a common generalization of LA-groupoids and $L_2$-algebroids. Also, we prove that these objects are equivalent to linear quasi Poisson structures on the corresponding dual VB-groupoids and give interesting examples that motivate their study. Based on Corollary \ref{cor:mor}, we show that these objects provide us with the appropriate Morita invariant concept of Lie algebroid. Therefore we can define a Lie algebroid in the category of differentiable stacks as the Morita class of a $1$-graded quasi $Q$-groupoid. 
\[ \begin{tikzcd}
  &  & \{\text{Lie algebroids}\} \arrow[lld, hook] \arrow[rrd, hook] &  &              \\
\{\text{$L_2$-algebroids}\} \arrow[rrd, hook] &  &      &  & \{\text{LA-groupoids}\} \arrow[lld, hook] \\
&  & \{\text{quasi LA-groupoids}\}   &  &  
\end{tikzcd}\]

\subsection{The functor $[1]$}\label{sec:fun1}
In this section we summarize some well known facts about vector bundles and $1$-manifolds that we will use later. Some standard references are \cite{raj:tes, raj:qgr, roy:on, vai:lie}.

Denote by $\cV\cB$ the category of vector bundles and by $1$-$\cM an$ the category of degree $1$-manifolds. In Example \ref{1-man} we show that, given a vector bundle $E\to M$, one can produce the $1$-manifold $E[1]=(M,\Gamma\wedge^\bullet E^*)$. It is not difficult to see that this construction is functorial. Indeed, the functor $$[1]:\cV\cB\to 1\text{-}\cM an,\quad  E\to M\rightsquigarrow E[1]$$  is an equivalence of categories. Therefore, the study of $1$-manifolds is equivalent to the study of vector bundles. In what follows we recall how this functor can be upgraded to include Lie algebroids \cite{vai:lie} and LA-groupoids \cite{raj:tes, raj:qgr}.

\subsubsection{Vector fields on $1$-manifolds}\label{sec:vecon1}  
Recall that a \emph{ Lie algebroid} is a vector bundle $A\to M$ with a vector bundle map $\rho:A\to TM$, called the \emph{anchor}, and a \emph{bracket} $[\cdot,\cdot]:\Gamma A\times \Gamma A\to \Gamma A$ satisfying
\begin{eqnarray}
    &\left[a,b\right]=-\left[b,a\right],\quad \left[a,fb\right]=f\left[a,b\right]+\rho(a)(f)b,\label{A1}\\
    &\left[a,\left[b,c\right]\right]=\left[\left[a,b\right],c\right]+\left[b,\left[a,c\right]\right],\label{A3}
\end{eqnarray}
for all $a,b,c\in\Gamma A$ and   $f\in C^\infty(M)$. If  only the equations in \eqref{A1} are satisfied then we say that  $(A\to M, [\cdot,\cdot],\rho)$ is a \emph{ pre-Lie algebroid}, see \cite[Def. 2.1]{gra:pre} or \cite[Def. 3.1]{gra:skew} where pre-Lie algebroids are called \emph{skew algebroids}. 

A pre-Lie algebroid is the same as a linear $\pi\in\fX^2(A^*)$. On the other hand, Equation \eqref{A3} is equivalent to $[\pi,\pi]=0$ \cite{coudir}, see e.g. \cite[Thm. 3.1]{gra:skew};   the same result also shows that $Q\in\fX^{1,1}(A[1])$ determines a pre-Lie algebroid structure on $A\to M$ and equation \eqref{A3} is satisfied if and only if $Q^2=0$, see also \cite{vai:lie}. The relation between $([\cdot,\cdot],\rho)$ and $Q$ is given by the usual Chevalley-Eilenberg formula for $\omega\in C^k(A[1])$
\begin{equation}\label{cart}
    \begin{split}
    Q(\omega)(a_0,\cdots a_k)=&\sum_{i=0}^k(-1)^i \rho(a_i)(\omega(a_0,\cdots,\widehat{a}_i,\cdots))\\
    &+\sum_{i<j}(-1)^{i+j}\omega([a_i,a_j],a_0,\cdots,\widehat{a}_i,\cdots,\widehat{a}_j,\cdots, a_k).
\end{split}
\end{equation}
Thus the category of Lie algebroids and the category of degree $1$ $Q$-manifolds are equivalent \cite{vai:lie}. One can extend the above observations to multivector fields \cite[Prop. $3.7$]{quapoi}, see also \cite[$\S 6.1$]{burcab}.

\begin{prop}\label{rem:linmul} Let $A\to M$ be a vector bundle and take a multivector field on $A^*$, $\beta \in \mathfrak{X}^{n}(A^*)$. Then the following are equivalent:
\begin{enumerate} 
    \item $\beta $ is a linear multivector field,
    \item the contraction $\beta :\oplus^{n}_{A^*} T^*A^* \rightarrow \mathbb{R}$ is a vector bundle map over $\oplus^{n} A $,
    \item $\beta $ is equivalent to $\sigma_\beta\in \fX^{1,n-1}(A[1])$, which is defined by $\sigma_\beta(\omega)^\uparrow= [\beta,\omega^\uparrow]$ for $\omega\in\Gamma \wedge^\bullet A^*$, where $(\cdot)^\uparrow$ denotes the vertical lift as in \cite[Eq.(3)]{mac:cla}.
\end{enumerate} 
Moreover, the assignment $\beta\to\sigma_\beta$ satisfies $\sigma_{[\beta_1,\beta_2]}=[\sigma_{\beta_1},\sigma_{\beta_2}].$
\end{prop}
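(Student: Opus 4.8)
The plan is to prove the two equivalences $(1)\Leftrightarrow(2)$ and $(1)\Leftrightarrow(3)$ separately and then to deduce the bracket compatibility from the graded Jacobi identity of the Schouten bracket; this parallels the $n\le 2$ cases (linear vector fields and pre-Lie algebroids) already recalled before the statement. The organizing tool throughout is the homogeneity grading (\emph{weight}) on $\fX^\bullet(A^*)$ induced by the Euler vector field $\cE$ of $A^*\to M$: in bundle coordinates $(x^i,p_a)$ the base directions $x^i$ and the fields $\partial_{x^i}$ have weight $0$, the fibre coordinates $p_a$ (i.e. $\Gamma A$) have weight $1$, and the vertical fields $\partial_{p_a}$ have weight $-1$. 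A short coordinate computation then shows that an $\cE$-homogeneous $n$-vector field of weight $1-n$ is forced to have the shape
\[
\beta = c^{c}_{a_1\cdots a_n}(x)\,p_c\,\partial_{p_{a_1}}\wedge\cdots\wedge\partial_{p_{a_n}} + \rho^{i}_{a_1\cdots a_{n-1}}(x)\,\partial_{x^i}\wedge\partial_{p_{a_1}}\wedge\cdots\wedge\partial_{p_{a_{n-1}}},
\]
and these are precisely the linear multivector fields. Thus I would first record that $(1)$ holds if and only if $\beta$ is $\cE$-homogeneous of weight $1-n$, which turns every subsequent verification into weight bookkeeping.

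For $(1)\Leftrightarrow(2)$ I would pass to the symbol picture, identifying $\fX^n(A^*)$ with the functions on $T^*A^*$ that are fibrewise homogeneous of degree $n$, under which the full contraction $\beta\colon\oplus^n_{A^*}T^*A^*\to\bR$ is the complete polarization of the symbol $\widehat\beta$. The key structure is that $T^*A^*$ is a double vector bundle with sides $A^*$ and $A$ and core $T^*M$, carrying three homogeneity structures: the cotangent lift $\widetilde{\cE}$ of $\cE$, the Euler field $E_1$ of $T^*A^*\to A^*$ and the Euler field $E_2$ of $T^*A^*\to A$, which satisfy the relation $\widetilde{\cE}=E_2-E_1$. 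Since the symbol of an $n$-vector field has $E_1$-weight $n$, and since condition $(2)$ asserts exactly that the polarization of $\widehat\beta$ is fibrewise linear for the $T^*A^*\to A$ structure, i.e.\ that $\widehat\beta$ is $E_2$-homogeneous of weight one, the relation above gives $\widetilde{\cE}\text{-weight}=E_2\text{-weight}-n=1-n$, which is precisely $(1)$. I expect the interpretation of condition $(2)$ to be the main obstacle, since it requires unwinding the two compatible vector bundle structures on $T^*A^*$ and the dictionary between fibrewise linearity over $\oplus^n A$ and $E_2$-homogeneity; once this double vector bundle bookkeeping is in place, the equivalence is the one-line weight comparison above.

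For $(1)\Leftrightarrow(3)$ I would use that the vertical lift $(\cdot)^{\uparrow}\colon\Gamma\wedge^\bullet A^*\to\fX^\bullet(A^*)$ is an injective morphism of exterior algebras whose image consists of the fibrewise-constant vertical multivector fields, a degree-$k$ element landing in weight $-k$. A weight count then shows that $[\beta,\omega^{\uparrow}]$ is again fibrewise-constant and vertical for every $\omega$ precisely when $\beta$ has weight $1-n$: an $(n+k-1)$-vector field of weight $1-n-k$ can only have $\partial_p$-legs with $x$-dependent coefficients, hence lies in the image of $(\cdot)^{\uparrow}$. This makes $\sigma_\beta$, defined by $\sigma_\beta(\omega)^{\uparrow}=[\beta,\omega^{\uparrow}]$, a well-defined map raising degree by $n-1$; and the Leibniz rule for the Schouten bracket together with the fact that $(\cdot)^{\uparrow}$ is an algebra morphism shows $\sigma_\beta$ is a derivation, i.e.\ $\sigma_\beta\in\fX^{1,n-1}(A[1])$. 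Conversely, evaluating on generators shows that well-definedness of $\sigma_\beta$ forces $\beta$ to be linear, closing the loop.

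Finally, the moreover part follows from the graded Jacobi identity. Applying $(\cdot)^{\uparrow}$ to $\sigma_{\beta_1}(\sigma_{\beta_2}(\omega))$ yields $[\beta_1,[\beta_2,\omega^{\uparrow}]]$, and graded Jacobi rewrites this as $[[\beta_1,\beta_2],\omega^{\uparrow}]$ plus the term with $\beta_1,\beta_2$ interchanged; since $(\cdot)^{\uparrow}$ is injective this gives $[\sigma_{\beta_1},\sigma_{\beta_2}]=\sigma_{[\beta_1,\beta_2]}$. The only care needed is to observe that the Schouten degree of $\beta_i$ equals the degree $n_i-1$ of $\sigma_{\beta_i}$ as a vector field on $A[1]$, so that the signs in the two graded Jacobi identities (on $\fX^\bullet(A^*)$ and on $\fX^{1,\bullet}(A[1])$) match and cancel correctly.
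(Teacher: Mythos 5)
The paper does not actually prove this proposition: it is quoted from the literature (\cite[Prop.~3.7]{quapoi}, \cite[\S 6.1]{burcab}), so there is no in-house argument to compare yours against. Your proof is a correct, self-contained argument in the same spirit as those references: the reduction of linearity to $\cE$-homogeneity of weight $1-n$ is the standard characterization and your coordinate normal form is right; the relation $\widetilde{\cE}=E_2-E_1$ on the double vector bundle $T^*A^*$ (sides $A^*$ and $A$, core $T^*M$) is correct and does give $(1)\Leftrightarrow(2)$ by the weight count you indicate; the weight argument for $(1)\Leftrightarrow(3)$ (an $(n+k-1)$-vector field of weight $-(n+k-1)$ must be vertical with basic coefficients, hence a vertical lift) is exactly the right mechanism, and the derivation property and the moreover part follow, as you say, from $(\cdot)^\uparrow$ being an injective algebra morphism together with graded Leibniz and graded Jacobi, with the sign bookkeeping working out because $|\sigma_{\beta_i}|=n_i-1$ equals the Schouten degree of $\beta_i$.

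Two small points you should tighten. First, in $(1)\Leftrightarrow(2)$ you speak of the ``symbol'' $\widehat\beta$ as a fibrewise degree-$n$ polynomial on $T^*A^*$ whose complete polarization is the contraction map; for an \emph{antisymmetric} $n$-vector field with $n\geq 2$ the diagonal evaluation vanishes, so this identification is only literally valid for symmetric tensors. The fix is cosmetic: either run the same weight computation on $T^*[1]A^*$ (odd fibre coordinates), where $\fX^\bullet(A^*)\cong C^\bullet(T^*[1]A^*)$ and the three weight operators and the relation $\widetilde{\cE}=E_2-E_1$ persist verbatim, or apply the slot-by-slot homogeneity argument directly to the multilinear contraction map without introducing a symbol function. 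Second, the converse in $(3)\Rightarrow(1)$ is asserted (``evaluating on generators shows\dots'') rather than carried out; it does work, but one should say that requiring $[\beta,\pi^*f]$ and $[\beta,\omega^\uparrow]$ (for $f\in C^\infty(M)$, $\omega\in\Gamma A^*$) to be vertical lifts kills, respectively, all terms of $\beta$ with two or more $\partial_x$-legs or with non-basic coefficients on a single $\partial_x$-leg, and all purely vertical terms whose coefficients are not fibrewise linear. Neither issue affects the validity of the overall argument.
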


\subsubsection{LA-groupoids as $Q$-groupoids}
Here we collect some basic facts about groupoids in the  category $\cV\cB$.
Recall that a \emph{VB-groupoid $(H\rightrightarrows E; G\rightrightarrows M)$} is a diagram of groupoids and vector bundles like \eqref{VB-groupoid} such that $\gr(\gm_H)\to\gr(\gm_G)$ is a vector subbundle of $H\times H\times H\to G\times G\times G$, see e.g. \cite{raj:VB, mac:book} for equivalent definitions and other properties. 
\begin{equation}\label{VB-groupoid}
\begin{array}{c}
     \xymatrix{H\ar@<-.5ex>[r]\ar@<.5ex>[r]\ar[d]&E\ar[d]\\
 G\ar@<-.5ex>[r]\ar@<.5ex>[r]&M}
\end{array}
\end{equation}
The \emph{core} of a VB-groupoid is the vector bundle over $M$ defined by $C=\gu_G^*\ker(\gs_H)\to M$. One can show that the vector bundle $H^*\to G$ inherits a VB-groupoid structure $(H^*\rightrightarrows C^*; G\rightrightarrows M)$ with core $E^*\to M$ known as the \emph{dual VB-groupoid} \cite[$\S 11.2$]{mac:book}. As Lie groupoids differentiate to Lie algebroids, the infinitesimal counterpart of a VB-groupoid is a \emph{VB-algebroid}. We denote the VB-algebroid of \eqref{VB-groupoid} by \eqref{VB-algebroid}, we follow the convention from \cite{burcabhoy} according to which double arrows indicate a Lie algebroid structure over a manifold. VB-groupoids and VB-algebroids form a category with morphisms defined in the natural way.
\begin{equation}\label{VB-algebroid}
\begin{array}{c}
     \xymatrix{A_H\ar@{=>}[r]\ar[d]&E\ar[d]\\
 A_G\ar@{=>}[r]&M}
\end{array}
\end{equation}
In this work we will need an additional structure on the vertical bundles in the diagram \eqref{VB-groupoid}. 
\begin{defi}
$(H\rightrightarrows E; G\rightrightarrows M)$ is an \emph{LA-groupoid} if $(H\rightrightarrows E; G\rightrightarrows M)$ is a VB-groupoid and  $(H\to G, [\cdot,\cdot],\rho)$ is a Lie algebroid such that $\gr(\gm_H)\to\gr(\gm_G)$ is a Lie subalgebroid of $H\times H\times H\to G\times G\times G,$ see \cite[Def. $2.1$]{liedir}.
    
    If $(H\to G, [\cdot,\cdot],\rho)$ is just a  pre-Lie algebroid, see $\S\ref{sec:vecon1}$, then we refer $(H\rightrightarrows E; G\rightrightarrows M)$ as a \emph{pre-LA-groupoid.}
\end{defi}
LA-groupoids have a rich Lie theory, see e.g. \cite{burcabhoy}. Their double nature allows us to ask questions about integration and differentiation. Let us mention here that their infinitesimal counterparts are double Lie algebroids whereas their global counterparts are double Lie groupoids $\S\ref{sec:douqpoi}$.

Another property that we want to emphasize is the duality between LA-groupoids and PVB-groupoids displayed in diagram \eqref{1dual}. In other words, $(H\rightrightarrows E; G\rightrightarrows M)$ is an LA-groupoid if and only if $(H^*\rightrightarrows C^*; G\rightrightarrows M)$ is a \emph{PVB-groupoid}, i.e. $H^*$ has a linear and multiplicative Poisson structure.
\begin{equation}\label{1dual}
    \text{LA-groupoid}\begin{array}{c}
       \xymatrix{H\ar@<-.5ex>[r]\ar@<.5ex>[r]\ar@{=>}[d]&E\ar@{=>}[d]\\
 G\ar@<-.5ex>[r]\ar@<.5ex>[r]&M} 
    \end{array}\overset{dual}{\longleftrightarrow} \begin{array}{c}
          \xymatrix{H^*\ar@<-.5ex>[r]\ar@<.5ex>[r]\ar@{~>}[d]&C^*\ar@{~>}[d]\\
 G\ar@<-.5ex>[r]\ar@<.5ex>[r]&M} 
    \end{array}\text{PVB-groupoid.}
\end{equation}

Using the functor $[1]$ one can translate the above definitions into supergeometric concepts. This was done in \cite{raj:tes,raj:qalg, raj:qgr}, we summarize here the relevant results for us in the following proposition.

\begin{prop}[See \cite{raj:tes,raj:qalg, raj:qgr}]\label{prop:raj}
    The functor $[1]$ has the following the properties:
    \begin{enumerate}
        \item It sends the VB-groupoid $(H\rightrightarrows E; G\rightrightarrows M)$ to the  $1$-graded Lie groupoid $H[1]\rightrightarrows E[1]$.
        \item Pre-LA-groupoid structures on $(H\rightrightarrows E; G\rightrightarrows M)$  are in one to one correspondence with $Q\in\fX^{1,1}_{mul}(H[1])$.
        \item The diagram 
        \begin{equation*}
            \xymatrix{\{\text{LA-groupoids}\}\ar[d]^{Lie}\ar[r]^{[1]\qquad}&\{1\text{-graded }Q\text{-groupoids}\}\ar[d]^{Lie}\\ \{\text{Double Lie algebroids}\}\ar[r]^{[1]\quad}& \{1\text{-graded } Q\text{-algebroids}\}}
        \end{equation*}
        is commutative and the horizontal arrows are equivalences of categories.
    \end{enumerate}
\end{prop}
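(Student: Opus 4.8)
The whole proposition rests on the single structural fact recalled in \S\ref{sec:fun1}, namely that $[1]:\cV\cB\to 1\text{-}\cM an$ is an equivalence of categories; my plan is to transport each layer of structure across this equivalence and, for each one, pin down exactly which condition it becomes. Because an equivalence is fully faithful, once I know that it matches objects-with-structure on the two sides, the equivalence-of-categories claims in (3) will be automatic, so the real content lies in identifying the conditions.

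For (1) I would argue that a VB-groupoid is, by definition, an internal groupoid in $\cV\cB$ with submersive source and target. The one point to verify is that $[1]$ carries the pullback bundles $H^{(k)}\to G^{(k)}$ forming the nerve to the fibered products $H[1]^{(k)}$ computed in $1\text{-}\cM an$; this amounts to the identity $\Gamma\wedge^\bullet(H^{(k)})^*\cong C^\bullet(H[1]^{(k)})$, which I would check on local trivializations where both sides are manifestly the same tensor product. Granting this, $[1]$ sends the nerve of the VB-groupoid to the nerve of an internal groupoid in $1\text{-}\cM an$, and since it preserves submersions the result is precisely a $1$-graded Lie groupoid $H[1]\rightrightarrows E[1]$.

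For (2) I would start from the correspondence of \S\ref{sec:vecon1}: via the Chevalley--Eilenberg formula \eqref{cart}, a pre-Lie algebroid structure on $H\to G$ is the same datum as a degree-$1$ field $Q\in\fX^{1,1}(H[1])$. It then remains to match the multiplicativities. A pre-LA-groupoid asks that $\gr(\gm_H)$ be a pre-Lie subalgebroid of $H\times H\times H$; equivalently, that the product field $Q\times Q\times Q$ be tangent to $\gr(\gm_H)[1]=\gr(\gm_{H[1]})$. But tangency of $Q\times Q\times Q$ to the graph of multiplication is exactly the assertion that $Q:H[1]\to T[1]H[1]$ is a groupoid morphism, i.e.\ that $Q$ is multiplicative. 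This matching is the step I expect to be the main obstacle, since it is where the abstract equivalence must be reconciled with the explicit nerve-level meaning of multiplicativity, and where the product pre-Lie algebroid structure on the three-fold product has to be carefully matched against tangency to the graph; I would either carry out the coordinate verification of \cite{raj:qgr} or make the tangency argument above precise.

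For (3) I would observe that an LA-groupoid is a pre-LA-groupoid whose bracket obeys the Jacobi identity, and that by \S\ref{sec:vecon1} Jacobi is equivalent to $Q^2=0$; together with (2) this identifies LA-groupoids with $1$-graded Lie groupoids carrying a multiplicative homological $Q$, i.e.\ with $1$-graded $Q$-groupoids (Definition \ref{def:qqgpd} at $\qq=0$). The horizontal arrows are then equivalences because they are the restriction of the equivalence $[1]$ to objects equipped with structures shown to correspond bijectively. For commutativity of the square I would invoke naturality of the $Lie$ functor: on both sides it is the linearization-at-the-unit construction, and $[1]$ intertwines the jet/tangent functors feeding that construction, exactly as in Example \ref{shift-tan-cotan}; hence the graded Lie algebroid of $H[1]\rightrightarrows E[1]$ is canonically $[1]$ of the double Lie algebroid of the LA-groupoid, giving $[1]\circ Lie=Lie\circ[1]$, and the lower horizontal arrow is an equivalence by the same structural reasoning applied to $Q$-algebroids and double Lie algebroids.
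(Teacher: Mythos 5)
The paper does not actually prove Proposition~\ref{prop:raj}: it is stated as a summary of results imported from \cite{raj:tes,raj:qalg,raj:qgr}, so there is no in-paper argument to compare against. Your outline is nonetheless a faithful reconstruction of the standard proof in those references: (1) follows from $[1]$ being a finite-limit-preserving equivalence $\cV\cB\to 1\text{-}\cM an$ (your check that $[1]$ carries the nerve pullbacks $H^{(k)}$ to the graded fibered products is exactly the point to verify); (2) combines the Va\u{\i}ntrob correspondence of \S\ref{sec:vecon1} with the tangency characterization of multiplicativity, namely that $Q\times Q\times Q$ preserves $\gr(\gm_H)[1]=\gr(\gm_{H[1]})$ iff $(Q,Q_0)$ is a groupoid morphism into $T[1]H[1]$ --- this is the genuine computational content and you correctly identify it as such; (3) then follows since Jacobi for the bracket is $Q^2=0$. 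Two small points you should make explicit if you write this up: the equivalence claim in (3) also requires matching \emph{morphisms}, i.e.\ that an LA-groupoid morphism corresponds under $[1]$ to a graded groupoid morphism intertwining the $Q$'s (this does follow from full faithfulness plus naturality of \eqref{cart}, but it is not literally ``automatic'' from the object-level bijection alone); and the commutativity of the square needs the identification $\cA_{H[1]}\cong A_H[1]$ of the graded Lie algebroid of $H[1]\rightrightarrows E[1]$ with $[1]$ of the VB-algebroid, which the paper uses repeatedly (e.g.\ in \S\ref{sec:qla}) and which your appeal to the compatibility of $[1]$ with the tangent construction is meant to supply.
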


\subsection{Quasi LA-groupoids} \label{sec:qla}

 Motivated by Proposition \ref{prop:raj}, we can make the following definition.

\begin{defi}\label{def:qla}
    Let $(H\rightrightarrows E, G\rightrightarrows M)$ be a VB-groupoid. We say that  $(H\rightrightarrows E, G\rightrightarrows M)$ is a \emph{quasi LA-groupoid} if $H[1]\rightrightarrows E[1]$ is a quasi $Q$-groupoid.
\end{defi}

Since the quasi Q-groupoids introduced here are a generalization of Q-groupoids. We obtain that quasi LA-groupoids, i.e. $1$-graded quasi Q-groupoids, are a generalization of LA-groupoids. Some immediate consequences of the definition are the following. 
\begin{enumerate}
    \item Quasi LA-groupoids form a strict 2-category with morphisms given as in $\S\ref{sec:mor}$.
    \item Given VB-groupoids $(H^i\rightrightarrows E^i; G^i\rightrightarrows M^i)\ i\in\{1,2\},$ they are VB-Morita equivalent as defined in \cite[$\S 3$]{mat:vb} if and only if $H^1[1]\rightrightarrows E^1[1]$ and $H^2[1]\rightrightarrows E^2[1]$ are Morita equivalent as $1$-graded Lie groupoids. Thus, Corollary \ref{cor:mor} implies that quasi LA-groupoid sructures can be transported along VB-Morita equivalences. 
    \item Therefore, we can make the following definition. A \emph{Lie algebroid in the category of differentiable stacks} is the Morita class of a quasi LA-groupoid. Compare with \cite{wal}. 
\end{enumerate}

Let $(H\rightrightarrows E, G\rightrightarrows M)$ be a VB-groupoid with VB-algebroid $(A_H\Rightarrow E; A_G\Rightarrow M)$.
Recall that a quasi $Q$-groupoid structure on $H[1]\rightrightarrows E[1]$ is given by a vector field  $Q\in\fX^{1,1}_{mul}(H[1])$ and a section $\qq\in\Gamma^2 (\cA_{H[1]})=\Gamma^2 (A_H[1]\to E[1])$ satisfying 
\begin{equation}\label{qeq}
    Q^2=\qq^l-\qq^r\quad \text{and}\quad[Q,\qq^r]=0.
\end{equation}
By Proposition \ref{prop:raj}, we know that $Q$ is equivalent to a pre-LA-groupoid structure on   $(H\rightrightarrows E, G\rightrightarrows M)$, i.e. a bracket $[\cdot,\cdot]$ and an anchor $\rho$ in $H$ satisfying that the graph of the multiplication is a subalgebroid. We still have to interpret $\qq\in\Gamma^2 (A_H[1])$ and \eqref{qeq}.

Denote by $\Omega\to M$ the dual of the vector bundle whose space of sections are the double linear functions on $A_H$. It is easy to see (for example using the geometrization functor given in \cite{bur:frob}) that a section $\qq\in\Gamma^2(A_H[1])$ is equivalent to two vector bundle maps
 $$\lambda:\wedge^2E\to A_G\quad \text{and}\quad \Psi:\wedge^3  E\to \Omega$$
satisfying
\begin{equation*}
	\pr\circ\Psi(e_1\wedge e_2\wedge e_3)=\frac{1}{3}\Big(\lambda(e_1\wedge e_2)\otimes e_3-\lambda(e_1\wedge e_3)\otimes e_2+\lambda(e_2\wedge e_3)\otimes e_1\Big)
\end{equation*}
for $e_1, e_2, e_3\in E$ and $\pr:\Omega\to A_G\otimes E$.
  Equations \eqref{qeq} can be expressed in terms of $([\cdot,\cdot],\rho,\lambda,\Psi)$ but the resulting characterization is not particularly useful except in some special situations that we shall describe below. Nevertheless, we can say that $\lambda$ controls the failure of the anchor map $\rho$ to be bracket preserving whereas $\Psi$ controls the failure of the Jacobi identity for the bracket.

\subsubsection{Quasi PVB-groupoids and duality}\label{sec:qPVB}
The aim of this section is to generalize the duality  of diagram \eqref{1dual} to quasi LA-groupoids. 

	Given a double vector bundle $(D\to B;A\to M)$, consider  the vector bundle $\wedge^k D^*\to B$. We say that $\omega\in\Gamma(\wedge^k D^*)$ is \emph{linear} if  
 \begin{equation*}
 \omega(s_1,\cdots,s_k)\in C_{lin}^\infty(B),\quad \omega(c_1,s_2,\cdots,s_k)\in C^\infty(M)\quad\text{and}\quad \omega(c_1,c_2, d_3,\cdots,d_k)=0,
	\end{equation*}	 
for $s_i\in\Gamma_{lin}(D,B)$ linear sections, $c_1,c_2\in\Gamma_{cor}(D)$ core sections and $d_i\in\Gamma (D,B)$ arbitrary sections.

\begin{defi}\label{def:qPVB}
A \emph{quasi PVB-groupoid} is a VB-groupoid $(H\rightrightarrows E;G\rightrightarrows M)$ endowed with $\Pi\in\fX^2_{mul}(H)$ and $ \pi\in\Gamma\wedge^3 (A_H\to E)$  such that $\Pi$ and $\pi$ are linear sections and satisfy the equations \eqref{eq:qP}.
\end{defi}
Now we can state the main result of this section which generalizes \cite[Thm. $5.9$]{strhom}.

\begin{thm}\label{thm:main} 
Let $(H\rightrightarrows E;G\rightrightarrows M)$ be a VB-groupoid with core $C$. Then the correspondence of Proposition \ref{rem:linmul} determines the following duality
\begin{equation*}
    \text{quasi LA-groupoid}\begin{array}{c}
       \xymatrix{H\ar@<-.5ex>[r]\ar@<.5ex>[r]\ar@{==>}[d]&E\ar@{==>}[d]\\
 G\ar@<-.5ex>[r]\ar@<.5ex>[r]&M} 
    \end{array}\overset{dual}{\longleftrightarrow} \begin{array}{c}
 \xymatrix@1{H^*\ar@<-.5ex>[r]\ar@<.5ex>[r]\ar@{{}*\composite{{\aaa}}{>}}  [d]&C^*\ar@{{}*\composite{{\aaa}}{>}}[d] \\ G\ar@<-.5ex>[r]\ar@<.5ex>[r]&M} 
\end{array}\text{quasi PVB-groupoid;}
\end{equation*}
which extends the one of \eqref{1dual}.
\end{thm}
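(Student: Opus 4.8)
The plan is to transport the correspondence $\beta\mapsto\sigma_\beta$ of Proposition \ref{rem:linmul} from linear multivector fields to the full data of both structures, thereby identifying the differential graded Lie algebra controlling (linear) quasi Poisson structures on $H^*\rightrightarrows C^*$ with the one controlling quasi $Q$-groupoid structures on $H[1]\rightrightarrows E[1]$; the duality then becomes the induced bijection of Maurer-Cartan elements, reducing to \eqref{1dual} when $\qq=\pi=0$. First I would apply Proposition \ref{rem:linmul} to the vector bundle $H\to G$, for which $A^*=H^*$ and $A[1]=H[1]$. This yields, for each $n$, a bijection $\sigma\colon\fX^n_{lin}(H^*)\to\fX^{1,n-1}(H[1])$ intertwining the Schouten bracket with the graded commutator, $\sigma_{[\beta_1,\beta_2]}=[\sigma_{\beta_1},\sigma_{\beta_2}]$. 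For $n=2$ it sends a linear bivector $\Pi$ to the degree $1$ field $Q=\sigma_\Pi$, and the morphism property gives $Q^2=\frac{1}{2}[Q,Q]=\sigma_{\frac{1}{2}[\Pi,\Pi]}$, which is the (pre-)LA-groupoid content of Proposition \ref{prop:raj} read through $\sigma$.

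Next I would promote $\sigma$ to the groupoid and algebroid levels. The key point is that $\sigma$ is natural with respect to the structure maps, so that it identifies multiplicative linear multivector fields on $H^*\rightrightarrows C^*$ with multiplicative graded vector fields on $H[1]\rightrightarrows E[1]$ and commutes with the two invariant-extension operations. Concretely, restricting $\sigma$ to the right- (resp. left-) invariant linear multivector fields $\pi^r$ (resp. $\pi^l$) attached to $\pi\in\Gamma_{lin}\wedge^\bullet(A_{H^*}\to C^*)$ produces right- (resp. left-) invariant graded vector fields on $H[1]$, hence sections of $A_{H[1]}$ via the identification $\Gamma\cA\cong\{\text{right-invariant vector fields}\}$. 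This defines the section-level bijection $\pi\leftrightarrow\qq$ characterised by $\sigma_{\pi^r}=\qq^r$ and $\sigma_{\pi^l}=\qq^l$, which in trivector degree reads $\Gamma_{lin}\wedge^3(A_{H^*}\to C^*)\cong\Gamma^2(A_{H[1]})$ and matches $\pi$ with $\qq$. Since $d(\pi)=\pi^r-\pi^l$ is then carried to $\qq^r-\qq^l=d(\qq)$, the map $\sigma$ assembles into an isomorphism of the linear part of the dgla $V_{H^*}$ of \S\ref{sec:pq} with the dgla $L_{H[1]}$ of Proposition \ref{prop:dgla}.

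With these identifications the defining equations match formally. From $\sigma_{\frac{1}{2}[\Pi,\Pi]}=Q^2$, $\sigma_{\pi^l-\pi^r}=\qq^l-\qq^r$ and the injectivity of $\sigma$ on linear multivectors, the equation $\frac{1}{2}[\Pi,\Pi]=\pi^l-\pi^r$ is equivalent to $Q^2=\qq^l-\qq^r$; similarly $[\Pi,\pi^r]=0$ is equivalent to $[Q,\qq^r]=0$ because $[Q,\qq^r]=\sigma_{[\Pi,\pi^r]}$. Hence a linear quasi Poisson structure \eqref{eq:qP} on $H^*\rightrightarrows C^*$ is exactly the datum of a quasi $Q$-groupoid structure \eqref{qeq} on $H[1]\rightrightarrows E[1]$, that is, of a quasi LA-groupoid structure on $(H\rightrightarrows E;G\rightrightarrows M)$; the bijection also follows at one stroke from Proposition \ref{prop:MC} applied to both sides of the dgla isomorphism.

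I expect the genuine obstacle to be the middle step. Proposition \ref{rem:linmul} is stated only for multivector fields on the total space, so extending it to the algebroid sections and checking that $\sigma$ commutes with $(\cdot)^l$ and $(\cdot)^r$ and preserves multiplicativity requires unwinding the double and VB-groupoid structures---most cleanly via the geometrization functor of \cite{bur:frob} invoked in \S\ref{sec:qla}, or by a direct computation with invariant vector fields along the lines of \cite{strhom}. Once the identities $\sigma_{\pi^r}=\qq^r$ and $\sigma_{\pi^l}=\qq^l$ are established, everything else is bookkeeping.
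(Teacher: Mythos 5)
Your proposal is correct and follows essentially the same route as the paper: the in-text proof is precisely the reduction you describe, using Proposition~\ref{rem:linmul} to match $Q$ with a linear multiplicative bivector $\Pi$ on $H^*$ and then translating the two defining equations once one knows the correspondence intertwines the right- and left-invariant extension operations. The middle step you flag as the genuine obstacle --- that $\qq^r(\cdot)^\uparrow=[\pi^r,\cdot^\uparrow]$ for a linear section $\pi$ of $\wedge^3 A_{H^*}\to C^*$, and conversely --- is exactly Proposition~\ref{pro:infder}, which the paper isolates and proves by a direct computation in Appendix~B (tangency of $\overline{\pi}$ to the $\gs_{H^*}$-fibres, right-invariance, and linearity of $\pi$), so you have correctly located where the real work lies even though you defer it rather than carry it out.
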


We shall prove this theorem by relying on the description of infinitesimal derivations in terms of classical differential geometry proven in Proposition \ref{pro:infder}. We give another proof of Theorem \ref{thm:main} using a different idea in Proposition \ref{pro:symdua}.

\begin{proof}[Proof of Theorem \ref{thm:main}] Let $Q$ be a multiplicative vector field of degree 1 on the graded groupoid $H[1] \rightrightarrows E[1]$, so it is equivalent to a differential of degree 1 on $\Gamma (\wedge^\bullet H^*)$. According to Proposition \ref{rem:linmul}, $Q$ corresponds to a linear bivector field $\Pi$ on $H^*$. The fact that $Q$ is multiplicative is equivalent to the multiplicativity of $\Pi$ with respect to the dual VB-groupoid $H^* \rightrightarrows C^*$. Note that the graded Jacobi identity implies that $Q^2(\cdot )^\uparrow=\frac{1}{2}[[\Pi,\Pi],\cdot^\uparrow]$. Since Proposition \ref{pro:infder} implies that any $\qq\in \Gamma^2\cA_{H[1]}=\Gamma^2(A_{H}[1])$ satisfies $\qq^r(\cdot)^\uparrow=[\pi^r,\cdot^\uparrow]$ for some linear section $\pi$, we have that the condition $Q^2=q^l-q^r$ is equivalent to $\frac{1}{2}[\Pi,\Pi]=\pi^l-\pi^r$. In such a situation, applying Proposition \ref{rem:linmul} to the right invariant derivation $[Q,q^r]$ allows us to conclude that $[Q,q^r]=0$ is equivalent to $ [\Pi,\pi^r]=0$ (notice that the commutator of derivations corresponds to the Schouten bracket of linear multivector fields).\end{proof}

 \subsubsection{Lie theory of quasi LA-groupoids}\label{subsec:liethe}
The infinitesimal counterpart of a quasi LA-groupoid is a generalization of the {\em double Lie algebroids} introduced in \cite{macdoualg}. Recall from \cite{quapoi} that a {\em Lie quasi-bialgebroid} is a triple $(A\Rightarrow M, \delta,\psi)$, where $A\Rightarrow M$ is a Lie algebroid, $\delta:\Gamma(\wedge^\bullet A)\to\Gamma(\wedge^{\bullet+1}A)$ is a degree 1 derivation and $\psi\in\Gamma(\wedge^3 A)$ satisfying:
$$\delta([\cdot,\cdot])=[\delta\cdot,\cdot]+[\cdot,\delta\cdot], \quad \delta^2(\cdot)=[\psi,\cdot]\quad \text{and}\quad \delta(\psi)=0.$$ 

 \begin{defi} 
 A \emph{quasi LA-algebroid} $((D\Rightarrow E; A\Rightarrow M),(\mathtt{a}^A,[\,,\,]^A),\pi)$ is given by a VB-algebroid $(D\Rightarrow E; A\Rightarrow M)$ with bracket and anchor $(\mathtt{a}^E,[\,,\,]^E)$, a linear pre-Lie algebroid structure $(\mathtt{a}^A,[\cdot,\cdot]^A)$ on $(D\rightarrow A; E\rightarrow M)$ and $\pi\in \Gamma (\wedge^3 D^\uparrow\to C^*)$ a linear section with the following properties:
\begin{enumerate} \item the anchor $\mathtt{a}^E$ defines a morphism of pre-Lie algebroids: $(D \rightarrow A;E \rightarrow M) \rightarrow (TE \rightarrow TM ;E \rightarrow M)$, where $TE \rightarrow TM $ is the tangent prolongation of $E \rightarrow M$,  
\item the anchor $\mathtt{a}_A$ defines a morphism of Lie algebroids $(D \rightarrow E;A \rightarrow M) \rightarrow (TA \rightarrow TM ;A \rightarrow M)$, where $TA \rightarrow TM $ is the tangent prolongation of $A \rightarrow M$,
\item there is a Lie quasi-bialgebroid structure $(D^\uparrow,\delta_D,\pi)$ on the VB-algebroid $(D^\uparrow \Rightarrow C^*;A \Rightarrow M)$ dual to $(D \Rightarrow E;A \Rightarrow M)$ see \cite[$\S 9.2$]{mac:book}, where the differential $\delta_D$ on $\wedge^\bullet D^\uparrow \rightarrow C^*$ is given by $\delta_D(\cdot)^\uparrow=[\Pi_A,\cdot^\uparrow]$ and $\Pi_A\in \mathfrak{X}^2(D^\uparrow) $ is the bivector field associated to $(\mathtt{a}^A,[\,,\,]^A)$.   
\end{enumerate}  
\end{defi}
\begin{rema}  The previous definition is inspired on \cite{macdoualg}. Alternatively, one can introduce a quasi LA-algebroid as a VB-algebroid such that applying the functor $[1]$ to the vertical bundle structures produces a quasi Q-algebroid. The equivalence between these two definitions is a long exercise, see \cite[\S 4.2.2]{raj:tes}. See \cite{vor:bia} for a similar situation.
\end{rema}
 
It is proven in \cite[$\S 1$]{macdou2}
 that an LA-groupoid differentiates to a double Lie algebroid. Similarly, we can differentiate a quasi LA-groupoid and obtain a quasi LA-algebroid. We can prove this result following the approach of \cite{macdou2}. Alternatively, we can dualize the involved structures and apply Theorem \ref{thm:main} which is the approach we prefer.

\begin{defi}
A \emph{quasi PVB-algebroid} is a VB-algebroid $(D\Rightarrow E; A\Rightarrow M)$  endowed with a linear $\Pi\in\fX^2(D)$ and a linear $\pi\in\Gamma(\wedge^3 D\to E)$ such that $(D\Rightarrow E, \sigma_\Pi,\pi)$ (as in Proposition \ref{rem:linmul}) is a Lie quasi bialgebroid.
\end{defi}

\begin{rema}
Observe that in the previous definition $TD\to D$ is a double vector bundle in two different ways $(TD\to D; TE\to E)$ and $(TD\to D; TA\to A)$ and we demand that $\pi\in \mathfrak{X}^2(D)$ is linear with respect to both structures.
\end{rema}

Some immediate consequences of the definitions given above are the following.
\begin{enumerate}
    \item The vertical dual of a quasi LA-algebroid is a quasi PVB-algebroid.
    \item Let $((H\rightrightarrows E,G\rightrightarrows M),\Pi, \pi)$ be a quasi PVB-groupoid. Then $((A_H\Rightarrow E, A_G\Rightarrow M), Lie(\Pi),\pi)$ is a quasi PVB-algebroid.
    \item As observed before, this implies the differentiation of quasi LA-groupoids.
\end{enumerate}
\begin{thm} The Lie functor establishes a bijection between the set of compatible quasi LA-groupoid structures on a source-simply connected VB-groupoid and the set of quasi LA-algebroid structures compatible with the corresponding VB-algebroid. \qed\end{thm}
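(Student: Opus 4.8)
The plan is to bypass a direct integration of the vector field $Q$ and its homotopy $\qq$ by dualizing, exactly as in our proof of Theorem \ref{thm:main}, so that the problem is reduced to the already well-understood integration of multiplicative multivector fields. First I would use the duality of Theorem \ref{thm:main} together with the observation recorded above that the vertical dual of a quasi LA-algebroid is a quasi PVB-algebroid: this turns the statement into the assertion that the Lie functor induces a bijection between quasi PVB-groupoid structures on the dual VB-groupoid $(H^*\rightrightarrows C^*; G\rightrightarrows M)$ and quasi PVB-algebroid structures on its VB-algebroid $(A_{H^*}\Rightarrow C^*; A_G\Rightarrow M)$. Since the duality intertwines the two differentiation operations---the Lie functor $\Pi\rightsquigarrow \Lie(\Pi)$ for multiplicative multivector fields on one side and $Q\rightsquigarrow \Lie(Q)$ on the other---this is an equivalent reformulation.

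Second, I would forget the linearity conditions for a moment and recognize, via \eqref{eq:qP} and \eqref{eq:dglaV}, that a quasi PVB-groupoid structure on $H^*\rightrightarrows C^*$ is nothing but a Maurer--Cartan element of the multiplicative multivector dgla of that groupoid, i.e.\ a quasi-Poisson structure, while a quasi PVB-algebroid structure is a quasi-Lie bialgebroid structure on $A_{H^*}\Rightarrow C^*$, which is the infinitesimal (IM) counterpart. The core analytic input is then the integration theorem for multiplicative multivector fields: on a source-simply connected Lie groupoid the Lie functor gives a one-to-one correspondence between quasi-Poisson structures and quasi-Lie bialgebroids (see \cite{burcabhoy, quapoi} and references therein). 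To apply it I must check that source-simply connectedness passes from $(H\rightrightarrows E; G\rightrightarrows M)$ to the dual groupoid $H^*\rightrightarrows C^*$; this holds because the source fibres of any VB-groupoid are affine bundles over the source fibres of the base groupoid $G\rightrightarrows M$, hence have the same homotopy type, so the three groupoids $H$, $H^*$ and $G$ are simultaneously source-simply connected.

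Third, I would check that this bijection restricts to the \emph{linear} Maurer--Cartan elements, i.e.\ that integration sends linear IM multivector fields to linear multiplicative ones and conversely. The cleanest way is to characterize linearity as homogeneity of degree one under the fibrewise scaling (Euler) flow of the vector bundle $H^*\to G$, respectively $A_{H^*}\to A_G$. This scaling flow acts by VB-groupoid (resp.\ VB-algebroid) automorphisms, and the integration correspondence of the previous step is natural, hence equivariant, with respect to such automorphisms; a multivector field is therefore linear if and only if its integral is, so the bijection of the second step restricts to the linear sub-classes, that is, to quasi PVB-algebroid and quasi PVB-groupoid structures. Combining this with the dualization of the first step yields the stated bijection, realized by the Lie functor.

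The main obstacle is this last compatibility argument: one must ensure that the integration of multiplicative multivector fields is genuinely equivariant for the scaling action and, simultaneously, compatible with the duality of Theorem \ref{thm:main}, so that linearity of $\pi$ with respect to \emph{both} double-vector-bundle structures on $TD\to D$ (as emphasized in the remark following the definition of quasi PVB-algebroid) is matched on the global side. Verifying the transfer of source-simply connectedness to $H^*\rightrightarrows C^*$ is elementary by the fibre-bundle argument above, but the naturality of the universal lifting correspondence under the Euler flow, and the bookkeeping identifying $\Lie(\Pi)$ with the differentiated IM data in the presence of the homotopy $\pi$, are where the real work lies.
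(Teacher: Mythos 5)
Your proposal is correct and follows essentially the same route as the paper: dualize via Theorem \ref{thm:main} to convert quasi LA-groupoid (resp.\ quasi LA-algebroid) structures into linear quasi-Poisson (resp.\ linear quasi-Lie bialgebroid) data, then invoke the universal lifting/integration theorem for quasi-Poisson groupoids together with the preservation of linearity under the Lie functor, which is exactly what the paper delegates to \cite[Prop. 5.3.1, Thm. 5.3.5]{burcabhoy}. Your Euler-flow characterization of linearity is the same mechanism used in that reference, so nothing essentially new or missing here.
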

Using the dualization provided by Theorem \ref{thm:main}, this result can be proven following the ideas in \cite[Prop. 5.3.1]{burcabhoy} and \cite[Thm. 5.3.5]{burcabhoy}. In fact, the same arguments that hold for linear Poisson groupoids structures carry over verbatim to linear quasi-Poisson groupoid structures.
\begin{rema}[Lie groupoids in the category of stacks]\label{rem:stalie}
    Recall that an LA-groupoid can be regarded as the Lie algebroid of a double Lie groupoid, see e.g. \cite{macdou}. Hence, quasi LA-groupoids should have a global counterpart. If we think of quasi LA-groupoids as Lie algebroids in the category of differentiable stacks, we expect that they should integrate to Lie groupoids in the category of differentiable stacks which are objects that generalize double Lie groupoids, see \cite{cc:lie2,cc:ngro} for some work in this direction. 
\end{rema}

\subsection{$L_2$-algebroids}\label{subsec:l2alg} 
In this section we extend the main result of \cite{bacr} by showing that the strict $2$-category of quasi LA-groupoids over a unit groupoid is equivalent to the strict $2$-category of $L_2$-algebroids. 

Recall that a \emph{$L_2$-algebroid}, see e.g. \cite{bon:on}, consist of two vector bundles over the same base $E\to M$ and $C\to M$ endowed with 
\begin{equation}\label{l2a}
    \rho:E\to TM,\quad \partial:C\to E, \quad [\cdot,\cdot]:\Gamma E\wedge E\to \Gamma E,\quad \nabla:\Gamma E\otimes C\to \Gamma C,\quad K: \wedge^3E\to C
\end{equation}
satisfying the Leibniz rules
\begin{equation}
    [e, fe']=f[e,e']+\rho(e)(f)e',\quad \nabla_{fe}c=f\nabla_ec,\quad \nabla_e(fc)=f\nabla_e c+\rho(e)(f)c\label{eq:LR}
\end{equation}
for $e,e'\in\Gamma E$, $c\in\Gamma C$ and $f\in C^\infty(M)$ and the integrability conditions
    \begin{align}
         \rho(\partial c)=0,\quad \partial\nabla_{e}c=[e,\partial c],\quad \nabla_{\partial c_1} c_2=-\nabla_{\partial c_2} c_1,\label{eq:L1}\\
        \partial K(e_1,e_2,e_3)=[e_1,[e_2,e_3]]-[[e_1,e_2],e_3]+[[e_1,e_3],e_2],\label{eq:L2}\\
        K(\partial c, e_1, e_2)=[\nabla_{e_1}, \nabla_{e_2}]c-\nabla_{[e_1,e_2]}c,\label{eq:L3}\\
        \big(\nabla_{e_1}K(e_2,e_3,e_4)+(-1)^i c.p.\big)=\big(K([e_1,e_2],e_3,e_4)+(-1)^i c.p.\big)+\label{eq:L4}\\
        +K([e_2,e_4],e_1,e_3)+K([e_1,e_3],e_2,e_4),\notag
    \end{align}
for $e_i\in\Gamma E$ and $c_i\in\Gamma C.$

By relying on the supergeometric viewpoint, we get a straightforward conceptual description of the $1$-morphisms between $L_2$-algebroids.  
\begin{prop}[See \cite{bon:on}]\label{pro:l2deg2}
    The functor $$[1]:\{L_2\text{-algebroids}\}\to \{\text{Degree } 2 \ Q\text{-manifolds}\}$$
    is an equivalence of categories
\end{prop}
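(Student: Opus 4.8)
The plan is to exhibit the functor $[1]$ concretely on objects, decode the homological condition into the $L_2$-axioms, and then establish essential surjectivity and full faithfulness; the reference \cite{bon:on} can be leaned on throughout. First I would describe $[1]$ on objects. Given an $L_2$-algebroid $(E,C,\rho,\partial,[\cdot,\cdot],\nabla,K)$ as in \eqref{l2a}, I would set $\cM=E[1]\oplus C[2]$, the split degree $2$ $\bN$-manifold with $C^\bullet(\cM)=\Gamma(\wedge^\bullet E^*\otimes\sym^\bullet C^*)$, where $E^*$ sits in degree $1$ and $C^*$ in degree $2$, so that $C^0(\cM)=C^\infty(M)$, $C^1(\cM)=\Gamma E^*$ and $C^2(\cM)=\Gamma(\wedge^2E^*\oplus C^*)$. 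Because a degree $1$ derivation is determined by its values on generators, I would define $Q\in\fX^{1,1}(\cM)$ by declaring its action on $C^\infty(M)$ to be the transpose of the anchor $\rho$, its action on $\Gamma E^*$ to be the transpose $\partial^*$ together with the Chevalley--Eilenberg term built from $[\cdot,\cdot]$ as in \eqref{cart}, and its action on $\Gamma C^*$ to be assembled from the connection $\nabla$ and the map $K$; extending by the graded Leibniz rule \eqref{der} produces a well-defined degree $1$ vector field, and $|Q|=1$ holds by construction.

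Next I would verify that the homological condition $Q^2=0$ is equivalent to the integrability conditions \eqref{eq:L1}--\eqref{eq:L4}. Since $Q^2=\tfrac12[Q,Q]$ is again a derivation, now of degree $2$, it vanishes on all of $C^\bullet(\cM)$ if and only if it vanishes on the generators in degrees $0,1,2$. Testing $Q^2=0$ against $C^\infty(M)$, against $\Gamma E^*$ and against $\Gamma C^*$ unwinds, degree by degree, into exactly the relations \eqref{eq:L1}, \eqref{eq:L2}, \eqref{eq:L3} and \eqref{eq:L4}; the first-order identities \eqref{eq:LR} record nothing beyond the fact that $Q$ is a derivation. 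Reading the same computation backwards shows that a homological vector field on a split degree $2$ $\bN$-manifold forces its structure maps to obey the $L_2$-axioms, so $[1]$ is well defined on objects and the object-level assignment is a bijection on split models.

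For morphisms I would match the two notions directly by comparing generators. A morphism of degree $2$ $Q$-manifolds is a degree-preserving algebra map $\psi^\sharp$ satisfying $\psi^\sharp\circ Q_\cN=Q_\cM\circ\psi^\sharp$; crucially, $\psi^\sharp$ may send a degree $2$ generator of $\cN$ to a degree $2$ generator of $\cM$ \emph{plus} a quadratic expression in the degree $1$ generators. On the $L_2$ side this degree-mixing is exactly a chain map $(E\to E',\,C\to C')$ together with a correction $\wedge^2E\to C'$, which is the extra datum carried by an $L_2$-morphism in \cite{bon:on}; unwinding the intertwining relation $\psi^\sharp Q_\cN=Q_\cM\psi^\sharp$ identifies these two packages of data, giving full faithfulness. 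For essential surjectivity I would invoke the splitting theorem for $\bN$-manifolds: every degree $2$ $\bN$-manifold is (non-canonically) isomorphic to a split one $E[1]\oplus C[2]$, after which the previous paragraph recovers an $L_2$-algebroid whose image under $[1]$ is isomorphic to the given $Q$-manifold.

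The hard part will be controlling the non-canonicity of the splitting. Two splittings of the same degree $2$ $Q$-manifold differ by a degree-mixing automorphism of the type just described, so I must check that the $L_2$-algebroids they produce are canonically $L_2$-isomorphic and that the whole construction is natural; this is precisely what upgrades the two mutually inverse assignments from an object-level bijection to an equivalence of categories. Once the splitting ambiguity is shown to be absorbed by $L_2$-isomorphisms, full faithfulness together with essential surjectivity yields the asserted equivalence.
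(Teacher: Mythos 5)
The paper does not actually prove this proposition: it is stated with the citation ``See \cite{bon:on}'' and no in-text argument, so there is nothing internal to compare against. Your outline is the standard argument and is essentially correct: split the degree $2$ manifold as $E[1]\oplus C[2]$, read off $(\rho,\partial,[\cdot,\cdot],\nabla,K)$ from the action of $Q$ on generators of degrees $0$, $1$, $2$, identify $Q^2=0$ on generators with \eqref{eq:L1}--\eqref{eq:L4}, and match the degree-mixing component of a morphism of graded algebras with the correction $\beta:\wedge^2E\to C'$ in an $L_2$-morphism. This is exactly the dictionary the paper itself deploys later, e.g.\ in the proof of Theorem \ref{l2algqla} and in \S\ref{subsec:ruth}. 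One remark: your last paragraph overstates the residual difficulty. Once full faithfulness is established on split models and essential surjectivity follows from the splitting theorem for $\bN$-manifolds, the equivalence is already proved; that two splittings of the same $Q$-manifold yield canonically $L_2$-isomorphic algebroids is a consequence of full faithfulness (the identity of the $Q$-manifold read through two splittings is a degree-mixing automorphism, hence corresponds to an $L_2$-isomorphism with nontrivial $\beta$), not an extra coherence condition to check. The genuine outstanding work is the computation you defer, namely verifying that $Q^2=0$ tested on $C^\infty(M)$, $\Gamma E^*$ and $\Gamma C^*$ yields precisely \eqref{eq:L1}--\eqref{eq:L4} and nothing more, and that the intertwining relation for morphisms yields \eqref{eq:l2mor0}--\eqref{eq:l2mor3}; that is the content carried out in \cite{bon:on}.
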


\begin{rema}
    It is proven in \cite{lacou} that degree $2$ $Q$-manifolds are also equivalent to VB-Courant algebroids. Therefore, all the results obtained in this sections can ve re-interpret in terms of them.
\end{rema}

Thus, in the supergeometric setting we get that a $1$-morphism is a morphism of degree $2$ $Q$-manifolds while a $2$-morphism between two degree $2$ $Q$-manifold morphisms is given by a $Q$-homotopy. Using the equivalence described above, we can express morphisms between $L_2$-algebroids in a straightforward but laborious process. For the sake of clarity, we restrict here to base preserving morphisms, for the general case we refer to \cite{bon:on}. 

A {\em morphism between $L_2$-algebroids} $(\partial:C \rightarrow E,\rho,\nabla,[\cdot,\cdot],K)$ and $(\partial':C' \rightarrow E',\rho',\nabla',[\cdot,\cdot]',K')$ over the same base manifold $M$ is a triple of vector bundle morphisms $F_0:E\rightarrow E'$, $F_1:C\rightarrow C'$ and $\beta:\wedge^2E \rightarrow C'$ such that
\begin{align}
    & F_0\circ\partial=\partial'\circ F_1,\label{eq:l2mor0}\\
    & \rho'\circ F_0=\rho,   \quad F_0([e_1,e_2])=[F_0(e_1),F_0(e_2)]'+\partial'(\beta(e_1,e_2))\label{eq:l2mor1} \\
    & F_1(\nabla_e c)-\beta(e,\partial (c))=\nabla'_{F_0(e)}F_1(c) \label{eq:l2mor2}\\ 
    &F_1(K(e_1,e_2,e_3))+(\beta([e_1,e_2], e_3)+c.p.)=\notag\\
    &\qquad K'(F_0(e_1),F_0(e_2),F_0(e_3))+(\nabla'_{F_0(e_1)}\beta(e_2,e_3)+ c.p.);\label{eq:l2mor3}
\end{align}
for all $e,e_i\in \Gamma E $ and all $c\in \Gamma C$. The composition of $L_2$-algebroid morphisms is defined in the natural way. Finally, a {\em 2-morphism} $\theta$ between the two $L_2$-algebroid morphisms 

$$(F_0, F_1,\beta),(G_0, G_1,\gamma):(\partial:C \rightarrow E,\rho,\nabla,[\cdot,\cdot],K)\to(\partial':C' \rightarrow E',\rho',\nabla',[\cdot,\cdot]',K')$$ is a vector bundle morphism $\theta:E \rightarrow C'$ which is a chain homotopy between the chain complex morphisms $(F_0,F_1)$ and $(G_0,G_1)$ and is such that
\begin{align}
    \beta(e_1,e_2)-\gamma(e_1,e_2)=\nabla'_{F_0(e_1)}\theta(e_2)-\nabla'_{F_0(e_2)}\theta(e_1)-\theta([e_1,e_2]) \quad \forall e_1,e_2\in \Gamma E.
\end{align}
Thus $L_2$-algebroids form a strict $2$-category that we denote by {\bf $L_2$-Alg}. In particular, if $M$ is a point, we recover the strict 2-category of $L_2$-algebras (2-term $L_\infty$-algebras) defined in \cite[$\S 4.3$]{bacr}.

As we have pointed out, quasi LA-groupoids form a strict 2-category that we denote by {\bf qLA}. And let us denote by {\bf qLA$_{u}$} the full sub $2$-category whose objects are just the quasi LA-groupoids that are VB-groupoids over a unit groupoid. 

\begin{thm}\label{l2algqla}
    The strict $2$-categories {\bf $L_2$-Alg} and {\bf qLA$_{u}$} are $2$-equivalent.
\end{thm}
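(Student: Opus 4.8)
The plan is to construct an explicit strict 2-functor $\Phi:\textbf{qLA}_u\to \textbf{$L_2$-Alg}$ and to verify it is a 2-equivalence by showing it is essentially surjective on objects and induces equivalences (in fact isomorphisms) on each hom-category. The starting observation is that a VB-groupoid $(H\rightrightarrows E; M\rightrightarrows M)$ over the unit groupoid is the same datum as a 2-term complex $\partial:C\to E$ of vector bundles over $M$, with $H\cong E\oplus C$ over $M$, source the projection to $E$, and target $\gs+\partial$; this is the classical linearization of VB-groupoids over a trivial base. Applying the functor $[1]$ and Proposition \ref{prop:raj}, the multiplicative vector field $Q$ on $H[1]\rightrightarrows E[1]$ is a pre-LA-groupoid structure, which in this trivial-base case unwinds precisely to the Leibniz data $(\rho,\partial,[\cdot,\cdot],\nabla)$ of \eqref{eq:LR} together with the $K$-free integrability conditions \eqref{eq:L1}. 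The section $\qq\in\Gamma^2\cA$, described by $\lambda$ and $\Psi$ as in \S\ref{sec:qla}, collapses over the unit groupoid (where $A_G=0$ forces $\lambda=0$ and trivializes the constraint on $\Psi$) to a single bundle map $K:\wedge^3 E\to C$.

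On objects I would then translate the defining equations $Q^2=\qq^l-\qq^r$ and $[Q,\qq^r]=0$ of Definition \ref{def:qqgpd} into the remaining integrability conditions \eqref{eq:L2}--\eqref{eq:L4}. Concretely, evaluating $Q^2$ on the Chevalley--Eilenberg generators coming from $E^*$ and $C^*$ by means of formula \eqref{cart}, the first equation produces \eqref{eq:L2} and \eqref{eq:L3} (the failure of the Jacobi identity and the curvature of $\nabla$ being governed respectively by $\partial K$ and by $K(\partial c,-,-)$), while $[Q,\qq^r]=0$ yields the top coherence \eqref{eq:L4}. Together with the previous paragraph this shows that the object data of a quasi LA-groupoid over a unit groupoid is exactly the object data of an $L_2$-algebroid, giving a bijection on objects and hence essential surjectivity; by Proposition \ref{pro:l2deg2} one may equivalently phrase this as an identification with degree 2 $Q$-manifolds.

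For 1-morphisms, a morphism $(F,\tau)$ of quasi $Q$-groupoids consists of a graded groupoid morphism $F$, which over the unit groupoids is a chain map $(F_0:E\to E',F_1:C\to C')$ satisfying \eqref{eq:l2mor0}, together with the natural transformation $\tau:T[1]F\circ Q_\cG\Rightarrow Q_\cH\circ F$; by the identification of natural transformations with algebroid sections used in Proposition \ref{prop:NT} and Proposition \ref{prop:1morgau}, $\tau$ is encoded by a map $\beta:\wedge^2 E\to C'$. I would then unwind the defining equations \eqref{eq:NT} for $\tau$ to obtain \eqref{eq:l2mor1} and \eqref{eq:l2mor2}, and the coherence law \eqref{eq:com-1mor} --- after expanding the displayed horizontal and vertical compositions of natural transformations and passing through $[1]$ --- to obtain the top relation \eqref{eq:l2mor3}. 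Likewise a 2-morphism $\Theta:F'\Rightarrow F$ linearizes to a bundle map $\theta:E\to C'$, whose being a natural transformation gives the chain-homotopy condition and whose compatibility with $\tau,\tau'$ gives the final displayed equation of $L_2$-algebroid 2-morphisms. Since these assignments respect the compositions $\diamond$ and $\bullet$ by construction, they assemble into a strict 2-functor $\Phi$, and the dictionary is invertible degree by degree, so $\Phi$ is fully faithful on 1- and 2-morphisms; with essential surjectivity this yields the asserted 2-equivalence, specializing to the Baez--Crans result of \cite{bacr} when $M$ is a point.

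The main obstacle I anticipate is the faithful translation of the abstract coherence law \eqref{eq:com-1mor} into the explicit relation \eqref{eq:l2mor3}. Expanding that pasting diagram requires interpreting $T[1]Q$ as a Lie derivative on the shifted tangent groupoid, tracking the whiskerings $\diamond$ and $\bullet$ through the equivalence $[1]$, and carefully handling the double-vector-bundle core term in which $Q^2=T[1]Q\circ Q$ is restricted. The bookkeeping of signs and of which composite lands in the core is where the genuine care is needed; by contrast, the object-level derivation of \eqref{eq:L2}--\eqref{eq:L4} from $Q^2=\qq^l-\qq^r$, though lengthy, is routine once the dictionary of \S\ref{sec:fun1} is in place.
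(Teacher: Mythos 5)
Your proposal is correct in substance and matches the paper's proof at the level of objects and 2-morphisms almost exactly: the same splitting $H\cong C\oplus E$ with structure maps \eqref{gmaps}, the same identification of $\qq$ with $K:\wedge^3E\to C$ (the paper gets this directly from the VB-algebroid being $(C\oplus E\Rightarrow E;0_M\Rightarrow M)$ rather than via the $(\lambda,\Psi)$ description, but your observation that $A_G=0$ kills $\lambda$ amounts to the same thing), and the same matching of $Q^2=\qq^l-\qq^r$ with \eqref{eq:L2}--\eqref{eq:L3} and of $[Q,\qq^r]=0$ with \eqref{eq:L4} via the explicit formulas \eqref{eqK}. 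Where you genuinely diverge is the treatment of 1-morphisms: you propose to expand the coherence law \eqref{eq:com-1mor} directly for a general pair $(F,\tau)$, and you correctly flag this pasting-diagram computation as the hard point --- but you leave it as an anticipated obstacle rather than resolving it. The paper avoids exactly this computation by Lemma \ref{lem:fact1mor}, which factors every 1-morphism as a composite of a strict morphism $(\ast,0)$ and a gauge transformation $(\id,\tau)$; the strict case is then just a $Q$-morphism with identity 2-cell, and the gauge case is handled entirely by Proposition \ref{prop:1morgau}, where \eqref{eq:com-1mor} has already been unwound into the gauge-transformation identity. If you pursue your direct route you will essentially have to reprove the content of Proposition \ref{prop:1morgau} inside the expansion of the whiskered composite, with the attendant bookkeeping of the core of $T[1]T[1]\cH$; the factorization buys you a clean separation of the chain-map data from the homotopy data and is the one idea your outline is missing to make the 1-morphism step tractable. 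The rest (2-morphisms as chain homotopies $\theta:E\to C'$, recovery of the Baez--Crans case over a point) agrees with the paper, which likewise leaves the 2-morphism verification to the reader.
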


\begin{proof}
    Consider a $L_2$-algebroid given by \eqref{l2a}. Define the VB-groupoid $(C\oplus E\rightrightarrows E, M\rightrightarrows M)$ with
    \begin{equation}\label{gmaps}
        \gs_H(c,e)=e,\quad \gt_H(c,e)=e+\partial c,\quad \gu_H(e)=(0,e),\quad \gm_H((c', e+\partial c),(c,e))=( c+c',e).
    \end{equation}
    Define a bracket and an anchor on $H=C\oplus E$ by means of
    $$\rho_H(c,e)=\rho(e),\quad [(c,e), (c',e')]_H=( \nabla_e c'-\nabla_{e'}c+\nabla_{\partial c}c',[e,e'])$$
    for $(c,e),(c',e')\in\Gamma H.$ One can easily verify that $(H,[\cdot,\cdot]_H, \rho_H)$ is a pre-Lie algebroid due to \eqref{eq:LR} and that 
    $$\gr(\gm_H)\to\gr(\gm_M)\subset H\times H\times H\to M\times M\times M$$
    is a subalgebroid  due to \eqref{eq:L1}. Thus, we get that $H[1]\rightrightarrows E[1]$ has $Q\in\fX^{1,1}_{mul}(H[1])$ given by \eqref{cart}.

    Since the VB-algebroid of $(H\rightrightarrows E, M\rightrightarrows M)$ is $(C\oplus E\Rightarrow E; 0_M\Rightarrow M)$, it is straightforward to verify that $K:\wedge^3 E\to C$ determines exactly a $\qq\in\Gamma^2( \cA_{H[1]})=\Gamma^2( A_{H}[1])$. The groupoid structure on $H$ allows us to describe $\qq^r$ and $\qq^l$ explicitly as well. Given $f\in C^\infty(M), \ \theta\in C^1(H[1])=\Gamma H^*$ and $h_i=(c_i,e_i)\in\Gamma H$ for $i=0,1,2$ we get
    \begin{equation}\label{eqK}
        \begin{split}
            \qq^r(f)&=a^l(f)=0,\\
            \qq^r(\theta)(h_0,h_1,h_2)&=\langle\theta, \big( K(e_0+\partial c_0,e_1+\partial c_1, e_2+\partial c_2), 0\big)\rangle,\\
            \qq^l(\theta)(h_0,h_1,h_2)&=\langle\theta, \big( K(e_0,e_1, e_2), -\partial K(e_0,e_1, e_2)\big)\rangle.
        \end{split}
    \end{equation}
    Hence, a straightforward computation shows that equations \eqref{eq:L2} and \eqref{eq:L3} imply that $Q^2=\qq^l-\qq^r$ and equation \eqref{eq:L3} implies $[Q, \qq^r]=0.$

    Conversely, let $(H\rightrightarrows E, M\rightrightarrows M)$ a VB-groupoid with core $C$ such that $(H[1]\rightrightarrows E[1], Q, \qq)$ is a $1$-graded quasi $Q$-groupoid. The unit subbundle induces a splitting of the exact sequence
   $$0\to C\to H\xrightarrow{\gs_H} E\to 0,$$
   thus $H=C\oplus E$ and the target $\partial:=\gt_{|C}:C\to E$ is a vector bundle map. In terms of this splitting, the structure maps become the ones given in \eqref{gmaps}.
   
   By Proposition \ref{prop:raj} we have that $Q\in\fX^{1,1}_{mul}(H[1])$ is equivalent to a pre-Lie algebroid structure  $([\cdot,\cdot]_H, \rho_H)$ on $H$ satisfying 
   \begin{equation}\label{eq:gr}
     \gr(\gm_H)\to\gr(\gm_M)\subset H\times H\times H\to M\times M\times M  
   \end{equation}
   is a subalgebroid. Thus we can define
   $$[e,e']=[(0,e),(0,e')]_H,\quad \nabla_e c=[(0,e), (c,0 )]_H,\quad \rho(e)=\rho_H(0,e).$$
   This definition satisfies \eqref{eq:LR} because $\rho_H(0,c)=0$. Indeed, this follows from the fact that $(TM\rightrightarrows TM, M\rightrightarrows M)$ has zero core. Moreover \eqref{eq:gr} implies that equations \eqref{eq:L1} hold. As before, the VB-algebroid of $(H\rightrightarrows E, M\rightrightarrows M)$ is $(C\oplus E\Rightarrow E; 0_M\Rightarrow M)$. Thus $\qq\in\Gamma^2 (A_{H}[1])$ is given exactly by a bundle map $K:\wedge^3 E\to C$. Since the groupoid structure is the same as before, \eqref{eqK} still holds and then we have that $Q^2=\qq^l-\qq^r$ is equivalent to \eqref{eq:L2} and \eqref{eq:L3}, whereas $[Q, \qq^r]=0$ is equivalent to \eqref{eq:L4}.

   Let us consider the equivalence at the level of $1$-morphisms.
   Consider $L_2$-algebroids $(\partial:C \rightarrow E,\rho,\nabla,[\cdot,\cdot],K)$ and $(\partial':C' \rightarrow E',\rho',\nabla',[\cdot,\cdot]',K')$ over $M$ with respective $1$-graded quasi $Q$-groupoids $(H[1]\rightrightarrows E[1], Q, \qq)$ and $(H'[1]\rightrightarrows E[1], Q', \qq')$ as before.  In order to simplify the computations, we consider morphisms of type $(F_0, F_1, 0)$ and $(\id, \id, \beta)$, the general situation reduces to these two cases, thanks to Lemma \ref{lem:fact1mor}.
   
   For the morphism $(F_0, F_1, 0)$, equation \eqref{eq:l2mor0} implies that $$F_0:E[1]\to E'[1]\quad \text{with}\quad F_1\oplus F_0:H[1]\to H'[1]$$ defines a graded groupoid morphism. Since $\beta=0$, equations \eqref{eq:l2mor1} and \eqref{eq:l2mor2} implies that $F_1\oplus F_0$ is a $Q$-morphism, thus $\id:T[1](F_1\oplus F_0)\circ Q_{H[1]}\Rightarrow Q_{H'[1]}\circ (F_1\oplus F_0)$ is a natural transformation and it is clear that \eqref{eq:l2mor3} becomes then \eqref{eq:com-1mor}.
   
   For the morphism $(\id, \id, \beta)$, we need $H[1]=H'[1]$. By the previous discussion, we get a morphism of the form $(\id,\tau)$. Using Proposition \ref{prop:1morgau}, we need to check that $\beta$ defines a $b\in\Gamma^1(A_H[1])$ which relates the quasi $Q$-structures as a gauge transformation. Since $A_H[1]$ is $(C\oplus E\Rightarrow E; 0_M\Rightarrow M)$, one can show that $\beta:\wedge^2 E\to C$ gives exactly such a $b\in\Gamma^1( A_{H}[1])$. Notice that $A_{H}[1]$ has zero bracket and $b^r$ and $b^l$ are given by 
    \begin{equation*}
        \begin{split}
            &b^r(f)=b^l(f)=0,\quad 
            b^r(\theta)(h_0,h_1)=\langle\theta, \big(\beta(e_0+\partial c_0,e_1+\partial c_1), 0\big)\rangle,\\
            &b^l(\theta)(h_0,h_1)=\langle\theta, \big( \beta(e_0,e_1), -\partial K(e_0,e_1)\big)\rangle.
        \end{split}
    \end{equation*}
   for $f\in C^\infty(M), \ \theta\in C^1_{H[1]}=\Gamma H^*$ and $h_i=(c_i,e_i)\in\Gamma H$ with $i=0,1$. Thus equations \eqref{eq:l2mor1} and \eqref{eq:l2mor2} are equivalent to $Q'=Q+b^l-b^r$, whereas \eqref{eq:l2mor3} is equivalent to $\qq'=\qq+[Q,b^r]$.

   Conversely, given $1$-morphisms of the type $(F,\id)$ and $(\id, \tau)$ we can produce the corresponding morphisms $(F_0,F_1, 0)$ and $(\id, \id,\beta)$ just as before. The equivalence between $2$-morphisms is left to the interested reader.
\end{proof}
\begin{lem}\label{lem:fact1mor} Every morphism in {\bf qLA} can be factored as a composite of a gauge transformation and a morphism of the form $(\ast,0)$. \end{lem}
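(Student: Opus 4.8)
The plan is to peel the coherence $2$-cell of a $1$-morphism off as a gauge transformation, leaving a strict $Q$-morphism of the form $(F,0)$. Fix a $1$-morphism $(F,\tau)\colon(\cG,Q_\cG,\qq_\cG)\to(\cH,Q_\cH,\qq_\cH)$. As in the proof of Proposition~\ref{prop:1morgau}, the defining constraints $T[1]\gs\circ\tau=0$ and $\pi\circ\tau=\gu$ force the natural transformation $\tau\colon T[1]F\circ Q_\cG\Rightarrow Q_\cH\circ F$ to be given by a degree $1$ section $b$ of the graded Lie algebroid (a section along the base map $F_0$), which is precisely the type of datum that appears as a gauge parameter in the formula $e^{b}\cdot(\qq,Q)$.

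First I would produce the two factors. Using $b$ together with Proposition~\ref{prop:1morgau}, I would form the gauge-transformed structure $(Q',\qq')=e^{b}\cdot(Q_\cG,\qq_\cG)$ on $\cG$ and the associated gauge $1$-morphism $(1_\cG,\sigma_b)\colon(\cG,Q_\cG,\qq_\cG)\to(\cG,Q',\qq')$, arranged so that $F$ intertwines $Q'$ with $Q_\cH$; the residual datum is then the strict morphism $(F,0)\colon(\cG,Q',\qq')\to(\cH,Q_\cH,\qq_\cH)$. The assertion is that $(F,\tau)=(F,0)\circ(1_\cG,\sigma_b)$.

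The verification separates into the underlying map and the $2$-cell. The underlying groupoid morphism of the composite is $F\circ 1_\cG=F$, which is clear. For the $2$-cell I would invoke the composition rule for $1$-morphisms: since the strict factor carries the trivial $2$-cell, the composite collapses to the whiskering $1_{T[1]F}\diamond\sigma_b$, and horizontal composition with $1_{T[1]F}$ transports the gauge datum $b$ to $\Lie(F)(b)$ along $F_0$, which is exactly the section encoding $\tau$. It then remains to match the higher datum, i.e. to check that the coherence law \eqref{eq:com-1mor} for $(F,\tau)$ holds once $(F,0)$ is strict and $\sigma_b$ is a gauge transformation; this should follow from a manipulation identical to \eqref{aux:Mors}, now performed relative to $F$ rather than on a single groupoid.

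The hard part will be this last reduction. The content of \eqref{eq:com-1mor} is a compatibility between $\qq_\cG$, $\qq_\cH$ and the square of the failure term, and I expect the delicate step to be showing that the same $b$ that corrects $Q$ also corrects $Q^2$ after transport along $F$, so that the two canonically defined $2$-cells $0\Rightarrow Q_\cH^2\circ F$ agree. A subtlety worth flagging is that $\tau$ is a section along $F_0$, not over a fixed base, so realizing it as a bona fide gauge parameter — equivalently, solving $\Lie(F)(b_\cG)=b$ — is exactly where the argument must do genuine work; once $\tau$ has been exhibited as a gauge transformation, the surviving coherence is a routine, if lengthy, diagram chase of the kind appearing throughout \S\ref{sec:mor}.
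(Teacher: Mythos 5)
There is a genuine gap, and it sits exactly at the point you flag as ``where the argument must do genuine work.'' Your plan is to factor $(F,\tau)$ as $(F,0)\circ(1_\cG,\sigma_b)$ with the gauge transformation living on the \emph{source}. By Proposition \ref{prop:1morgau}, a gauge $1$-morphism $(1_\cG,\sigma_b)$ is encoded by a section $b\in\Gamma^1\cA_\cG$, i.e.\ (in the degree $1$ case) a bundle map $\wedge^2 E\to C$ into the core of $H$. But $\tau$ is a section \emph{along} $F_0$ with values in the core $C'$ of the target $H'$, and there is no reason for it to be of the form $F_1\circ b$ for any such $b$; the equation $\Lie(F)(b)=\tau$ is in general unsolvable. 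A concrete obstruction: for a $2$-connection $(F,\tau):(TM\rightrightarrows TM)\to(\mathbb{A}_{\mathbf{P}}\rightrightarrows\mathbb{A}_{\mathbf{Q}})$ as in \S\ref{sec:ati}, the source VB-groupoid has zero core, so the only available gauge parameter on the source is $b=0$, which would force $\tau=0$ in your factorization even though $\tau=\tau_0$ is typically nonzero. Putting the gauge transformation on the target instead runs into the dual problem: $\tau$ is only defined along $F_0$ and need not extend to a section over all of $\cN$ when $F_0$ fails to be injective.

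The paper's proof sidesteps both obstructions by passing to the product: it writes $(F,\tau)=(\mathrm{pr}_{H'[1]},0)\circ(1,-b)\circ(i_F,0)$, where $i_F$ is the strict inclusion of $H$ as the graph of $F$ inside $H\times H'$, and $b:\wedge^2(E\times E')\to C\times C'$ is \emph{any} extension of $-\tau$ off the graph of $F_0$ (such an extension always exists because the graph is a subbundle of $E\times E'$). On the product both cores are available, so $\tau$ always becomes an honest gauge parameter, and the two outer factors are strict. Note this also shows that the lemma's statement should be read as ``a composite of gauge transformations and strict morphisms'' (three factors, not two); taking it literally as exactly two factors is what pushes your argument into the unsolvable lifting problem.
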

\begin{proof} Let $(F,\tau)$ be a quasi LA-groupoid morphism between the quasi $Q$-groupoids $(H[1]\rightrightarrows E[1], Q, \qq)$ and $(H'[1]\rightrightarrows E[1], Q', \qq')$ with corresponding VB groupoids $H \rightrightarrows E$, $H' \rightrightarrows E'$ with respective cores $C$ and $C'$. Consider the gauge transformation $(1,{-b}): (H[1] \times H'[1], e^b(Q \times Q',q\times q'))  \rightarrow (H[1] \times H'[1], Q \times Q',q\times q') $ defined by any extension $b$ of $\tau$:
\[ b: \wedge^2 (E\times E') \rightarrow C\times C' \, \text{with $b((e_1,F_0(e_1))\wedge (e_2,F_0(e_2))=-\tau(e_1\wedge e_2) $ for all $ e_1\wedge e_2 \in \wedge^2 E$}. \]
Then we have that $(F,\tau)=(\text{pr}_{H'[1]},0)\circ(1, {-b})\circ (i_F,0)$, where $i_F: H \rightarrow H\times H'$ is the inclusion of $H$ as the graph of $F$.   
\end{proof}

Let us point out the following consequence of Proposition \ref{pro:l2deg2} and Theorem \ref{l2algqla}. 

\begin{coro} 
The category of degree 2 $Q$-manifolds and the category of quasi LA-groupoids over a unit manifold are equivalent. \qed
\end{coro}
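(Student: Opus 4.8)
The plan is to deduce the equivalence by composing two equivalences of ordinary categories: the one supplied by Proposition \ref{pro:l2deg2} and the one underlying the $2$-equivalence of Theorem \ref{l2algqla}.

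First I would pass from the $2$-equivalence of Theorem \ref{l2algqla} to an equivalence of the underlying $1$-categories. By construction the underlying $1$-category of {\bf $L_2$-Alg} is exactly the category of $L_2$-algebroids, with morphisms as in \eqref{eq:l2mor0}--\eqref{eq:l2mor3}, while the underlying $1$-category of {\bf qLA$_{u}$} is the category of quasi LA-groupoids over a unit manifold. Here one must be careful, since a generic $2$-equivalence of strict $2$-categories need not restrict to an equivalence of underlying $1$-categories: the local functors on hom-categories are only fully faithful and essentially surjective \emph{up to $2$-isomorphism}, so the underlying $1$-functor may fail to be full or faithful. What saves us is that the comparison produced in the proof of Theorem \ref{l2algqla} is given on objects by the explicit assignment sending an $L_2$-algebroid $(\partial\colon C\to E,\rho,\nabla,[\cdot,\cdot],K)$ to the quasi $Q$-groupoid $(C\oplus E\rightrightarrows E)[1]$, and the correspondence on $1$-morphisms established there, via the factorization of Lemma \ref{lem:fact1mor} into gauge transformations and strict morphisms $(\ast,0)$, is an honest bijection compatible with composition. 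Hence the underlying $1$-functor is fully faithful and essentially surjective, i.e.\ a genuine equivalence
$$\{L_2\text{-algebroids}\}\ \simeq\ \{\text{quasi LA-groupoids over a unit manifold}\}.$$

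Next I would invoke Proposition \ref{pro:l2deg2}, which states that the functor $[1]$ is an equivalence of categories between $L_2$-algebroids and degree $2$ $Q$-manifolds. Composing its quasi-inverse with the equivalence of the previous paragraph yields the desired equivalence
$$\{\text{degree }2\ Q\text{-manifolds}\}\ \simeq\ \{\text{quasi LA-groupoids over a unit manifold}\}.$$

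I expect the descent from the $2$-categorical assertion of Theorem \ref{l2algqla} to the level of underlying $1$-categories to be the only real obstacle, and it is handled not by an abstract $2$-categorical principle but by the observation above that the comparison functor is already strict enough on objects and $1$-morphisms. Once this is in place, the corollary is a purely formal composition of equivalences.
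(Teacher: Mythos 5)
Your proof is correct and follows the same route as the paper, which states this corollary as an immediate consequence of Proposition \ref{pro:l2deg2} and Theorem \ref{l2algqla} with no further argument. Your additional care in checking that the $2$-equivalence of Theorem \ref{l2algqla} actually descends to an equivalence of underlying $1$-categories (via the explicit object assignment and the factorization of Lemma \ref{lem:fact1mor}) addresses a point the paper leaves implicit, but it does not change the approach.
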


\subsection{The category of multiplicative sections}\label{subsec:mulsec} Let $(H \rightrightarrows E, G\rightrightarrows M)$ be a VB-groupoid with core $C$. As pointed out in \cite[$\S 3$]{ortwal}, there is a 2-term complex of vector spaces
\begin{align} \Gamma (C) \xrightarrow{\delta_H} \Gamma_{mul}(H), \quad u\mapsto u^r-u^l \quad \forall u\in \Gamma (C). \label{eq:catsec}  
\end{align} 
If $H \rightrightarrows E$ is an LA-groupoid, it was shown in \cite[$\S 7.1$]{ortwal} that the Lie bracket on sections naturally induces a crossed module structure on the above 2-term complex which then induces a strict $L_2$-algebra as in $\S$
 \ref{sec:douqpoi}. Here we show that if $H \rightrightarrows E$ is equipped with a quasi LA-groupoid structure, \eqref{eq:catsec} becomes an $L_2$-algebra. We will give a conceptually simple proof of this result by combining Theorem \ref{thm:main}  with some observations from \cite{l2algqpoi}. In the subsequent discussion, we will use the notation $[\cdot,\cdot]_2$ and $[\cdot,\cdot,\cdot]_3$ to represent the 2-bracket and 3-bracket of a Lie 2-algebra, respectively. 
 
 \begin{lem}[See \cite{l2algqpoi}]\label{lem:l2algqpoi} 
 Let $(G\rightrightarrows M,\Pi,\pi)$ be a quasi Poisson groupoid and consider the 2-term complex
 \begin{align}
     C^\infty(M) \xrightarrow{\delta} C_{mul}^\infty(G), \quad k\mapsto \mathtt{s}^*k -\mathtt{t}^*k \quad  \forall k\in C^\infty(M). \label{eq:2tercomqpoi}
 \end{align} Then the 2-bracket $[\cdot,\cdot]_2$ defined by
 \[ [f,g]_2=\{f,g\}:=\mathcal{L}_{\Pi^\sharp(df)}g, \quad \mathtt{s}^* [f,k]_2=\{f,\mathtt{s}^*k\}  \quad \forall f,g\in C_{mul}^\infty(G),\, k\in C^\infty(M) \]
 together with the 3-bracket
 \[ [f,g,h]_3= \Theta(f,g,h), \quad \text{where}\quad i_{df\wedge dg \wedge dh}\pi^r=\mathtt{t}^* \Theta(f,g,h), \quad  \forall f,g,h\in C_{mul}^\infty(G); \]
 turn \eqref{eq:2tercomqpoi} into an $L_2$-algebra.
 \end{lem}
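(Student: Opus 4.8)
The plan is to verify directly the axioms of a $2$-term $L_\infty$-algebra in the sense of \cite{bacr} for the complex \eqref{eq:2tercomqpoi}, taking as the sole geometric input the two defining equations \eqref{eq:qP} of a quasi Poisson groupoid. The computation is organized around the classical identity valid for any bivector field $\Pi$ on $G$, namely that the Jacobiator of the derived bracket $\{f,g\}=\Pi(df,dg)$ is
\begin{equation*}
    \{f,\{g,h\}\}+\{g,\{h,f\}\}+\{h,\{f,g\}\}=\tfrac{1}{2}[\Pi,\Pi](df,dg,dh).
\end{equation*}
Substituting the first equation in \eqref{eq:qP} rewrites this Jacobiator as $(\pi^l-\pi^r)(df,dg,dh)$, while the second equation in \eqref{eq:qP} will be responsible for the top coherence law.

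First I would check that the three brackets are well defined. Multiplicativity of $\Pi$ ensures that $\{\cdot,\cdot\}$ preserves $C^\infty_{mul}(G)$, giving $[\cdot,\cdot]_2$ on $L_0=C^\infty_{mul}(G)$, and that $\{f,\mathtt{s}^*k\}$ is again the $\mathtt{s}$-pullback of a function on $M$, which is what defines $[f,k]_2$; the same multiplicativity yields $\mathtt{t}^*[f,k]_2=\{f,\mathtt{t}^*k\}$. The chain/derivation axiom $J_2$ is then immediate,
\begin{equation*}
    \delta[f,k]_2=\mathtt{s}^*[f,k]_2-\mathtt{t}^*[f,k]_2=\{f,\mathtt{s}^*k-\mathtt{t}^*k\}=[f,\delta k]_2.
\end{equation*}
For the $3$-bracket I would use that $\pi^r$ is right invariant, hence tangent to the $\mathtt{s}$-fibres, and that $df,dg,dh$ are multiplicative $1$-forms; together these force $i_{df\wedge dg\wedge dh}\pi^r$ to be of the form $\mathtt{t}^*\Theta(f,g,h)$ and, symmetrically, $i_{df\wedge dg\wedge dh}\pi^l=\mathtt{s}^*\Theta(f,g,h)$, which both makes $[\cdot,\cdot,\cdot]_3$ well defined and fixes its relation to $\pi^l,\pi^r$.

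With this in hand the Jacobiator axiom $J_3$ on $\wedge^3 L_0$ follows by combining the two facts above,
\begin{equation*}
    [[f,g]_2,h]_2+\mathrm{c.p.}=(\pi^l-\pi^r)(df,dg,dh)=\mathtt{s}^*\Theta(f,g,h)-\mathtt{t}^*\Theta(f,g,h)=\delta[f,g,h]_3.
\end{equation*}
The mixed form of $J_3$, with one entry $k\in L_1$, follows from the same substitution once one observes that $d(\mathtt{s}^*k)$ annihilates the $\mathtt{s}$-vertical directions and $d(\mathtt{t}^*k)$ the $\mathtt{t}$-vertical ones, so that $i_{d(\mathtt{s}^*k)}\pi^r=0$ and $i_{d(\mathtt{t}^*k)}\pi^l=0$; this reduces the two contractions to the defining relation of $[f,g,\delta k]_3$.

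The main obstacle is the top coherence law $J_4$, the $L_\infty$-relation among four degree-$0$ elements asserting that the cyclic combination $\sum\pm\,[[f_i,f_j]_2,f_k,f_l]_3$ vanishes. I expect this to be precisely the shadow of the second equation $[\Pi,\pi^r]=0$ in \eqref{eq:qP}: contracting this Schouten identity with $df_1\wedge\cdots\wedge df_4$ and translating between $\pi^r$ and $\Theta$ through the invariance bookkeeping set up above should produce exactly the required combination. Reconciling the signs and the $\mathtt{s}/\mathtt{t}$ invariance bookkeeping in this four-term identity is the delicate point; conceptually it is transparent, since viewing $(\Pi,\pi)$ as a Maurer--Cartan element of the dgla of \S\ref{sec:pq} makes $J_4$ a formal consequence of the Maurer--Cartan equation, but turning that picture into the stated cyclic identity is where the real work lies.
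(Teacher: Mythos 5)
Your route is genuinely different from the paper's. The paper does not carry out the direct verification at all: it embeds the complex $C^\infty(M)\xrightarrow{\delta} C^\infty_{mul}(G)$ into $\Omega^1(M)\xrightarrow{\delta}\Omega^1_{mul}(G)$ via the de Rham differential and then invokes \cite[Thm 3.1]{l2algqpoi}, so the entire burden of the coherence identities is outsourced to that reference. Your direct approach is the one the paper merely alludes to with ``one can prove this directly,'' and the ingredients you isolate are the right ones: multiplicativity of $\Pi$ for well-definedness of $[\cdot,\cdot]_2$, the Jacobiator identity $\{f,\{g,h\}\}+\mathrm{c.p.}=\tfrac12[\Pi,\Pi](df,dg,dh)$ combined with $\tfrac12[\Pi,\Pi]=\pi^l-\pi^r$ for $J_3$, and the tangency of $\pi^r$ (resp.\ $\pi^l$) to the $\mathtt{s}$-fibres (resp.\ $\mathtt{t}$-fibres) for the existence of $\Theta$ and for the mixed $J_3$.

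There is, however, a genuine gap at the step you yourself flag as the hard one. First, the identity you propose to verify for the top coherence law is misstated: for a $2$-term $L_\infty$-algebra the axiom is not the vanishing of the cyclic sum $\sum\pm\,[[f_i,f_j]_2,f_k,f_l]_3$ alone, but an identity equating that sum with the alternating sum of the terms $[[f_i,f_j,f_k]_3,f_l]_2$, whose outputs live in $L_1=C^\infty(M)$ and are fed back into the $2$-bracket. Both families of terms do arise when one contracts $[\Pi,\pi^r]=0$ with $df_1\wedge\cdots\wedge df_4$ (the Schouten bracket produces $\{f_l,\Theta(f_i,f_j,f_k)\}$-type terms as well as $\Theta([f_i,f_j]_2,f_k,f_l)$-type terms), so your proposed source of the identity is correct, but as written the target is the wrong equation and the derivation is only an expectation, not a proof. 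If you want to avoid this bookkeeping altogether, the cleanest fix is exactly the paper's: either cite \cite[Thm 3.1]{l2algqpoi} after the de Rham embedding, or recast $(\Pi,\pi)$ as a Maurer--Cartan element of the dgla $(V^\bullet_G,d,[\cdot,\cdot])$ of \S\ref{sec:pq} and let the $L_\infty$ axioms follow formally, as you suggest in your last sentence but do not execute.
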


\begin{proof} 
One can prove this directly. Alternatively, one can include $C^\infty(M) \xrightarrow{\delta} C_{mul}^\infty(G)$ into $\Omega^1(M) \xrightarrow{\delta} \Omega_{mul}^1(G)$ using the de Rham differential and then use \cite[Thm 3.1]{l2algqpoi}.
\end{proof}
 
\begin{prop}\label{pro:catsec}
Let $H \rightrightarrows E$ be a quasi LA-groupoid with core $C$. Then its corresponding 2-term complex \eqref{eq:catsec} inherits a canonical $L_2$-algebra structure. 
 \end{prop}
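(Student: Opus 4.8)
The goal is to equip the $2$-term complex \eqref{eq:catsec} with an $L_2$-algebra structure. The strategy is to avoid direct computation by transporting the structure from a situation we already understand, namely the quasi Poisson groupoid of Lemma \ref{lem:l2algqpoi}. The key observation is that Theorem \ref{thm:main} identifies a quasi LA-groupoid structure on $H \rightrightarrows E$ with a quasi PVB-groupoid structure on the dual $H^* \rightrightarrows C^*$; in particular, this furnishes a linear and multiplicative quasi Poisson structure $(\Pi,\pi)$ on the groupoid $H^* \rightrightarrows C^*$. Thus the first step is to apply Theorem \ref{thm:main} to obtain $(H^* \rightrightarrows C^*, \Pi, \pi)$ as a quasi Poisson groupoid, so that Lemma \ref{lem:l2algqpoi} applies to it.

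\textbf{From functions to sections via linearity.} Lemma \ref{lem:l2algqpoi} produces an $L_2$-algebra structure on the $2$-term complex $C^\infty(C^*) \xrightarrow{\delta} C^\infty_{mul}(H^*)$ of functions on the dual groupoid. The second and crucial step is to restrict this structure to the \emph{linear} functions. Under the duality of Proposition \ref{rem:linmul}, linear functions on $H^*$ are exactly sections of $H$, and fiberwise-linear functions on $C^*$ are sections of $C$; more precisely, $\Gamma(C) \cong C^\infty_{lin}(C^*)$ and $\Gamma_{mul}(H) \cong C^\infty_{mul,lin}(H^*)$, so the complex \eqref{eq:catsec} is identified with the subcomplex of linear functions inside $C^\infty(C^*) \xrightarrow{\delta} C^\infty_{mul}(H^*)$. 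One then checks that the $2$-bracket $[\cdot,\cdot]_2=\{\cdot,\cdot\}$ and the $3$-bracket $[\cdot,\cdot,\cdot]_3$ of Lemma \ref{lem:l2algqpoi} preserve linearity: since $\Pi$ is a linear bivector field, the Poisson bracket of two linear functions is again linear (this is precisely the statement that $\sigma_\Pi$ is a derivation of the Lie-algebroid type, cf. Proposition \ref{rem:linmul}), and similarly $\pi$ being a linear section guarantees that contracting $\pi^r$ against differentials of linear functions yields a linear function on $C^*$, i.e. a section of $C$. Therefore the linear part is closed under all the brackets and inherits an $L_2$-algebra structure.

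\textbf{Identifying the induced brackets.} The final step is to verify that the brackets obtained this way are the expected canonical ones, namely that $[\cdot,\cdot]_2$ restricts to the Lie algebroid bracket on $\Gamma_{mul}(H)$ induced by the pre-Lie algebroid structure $Q$, and that the homotopy $3$-bracket is controlled by the section $\qq$ (equivalently $\pi$). This is a matter of unwinding the correspondence $\beta \mapsto \sigma_\beta$ of Proposition \ref{rem:linmul}: the derivation $\sigma_\Pi$ associated to $\Pi$ is exactly $Q$, so $\{\cdot,\cdot\}$ on linear functions reproduces the algebroid bracket on the corresponding sections, and $\pi^r$ corresponds to $\qq^r$ so that the $3$-bracket measures the Jacobiator of the bracket on $\Gamma_{mul}(H)$, matching $\Psi$.

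\textbf{The main obstacle.} The conceptual steps above are clean, but the technical heart of the argument—and the step I expect to require the most care—is verifying that the brackets of Lemma \ref{lem:l2algqpoi} genuinely preserve the subspace of linear functions and that the resulting restriction is again an $L_2$-algebra (rather than merely the brackets being well-defined on representatives). The subtlety is that the $L_2$-axioms for the restricted structure do not automatically follow from those of the ambient structure unless the subcomplex is closed under all operations including the coherence data; one must confirm that the chain homotopy witnessing the Jacobiator, and all higher coherences implicit in the $L_2$ structure, also land in the linear part. The cleanest way to do this is to make the linearity statements in Proposition \ref{rem:linmul} do all the work: because $\Pi$ and $\pi$ are \emph{linear}, every operation built from them out of linear inputs has linear output, so the inclusion of linear functions is a strict morphism of $L_2$-algebras onto its image, and the $L_2$-structure restricts verbatim. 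Once this closure is established, the result follows immediately from Lemma \ref{lem:l2algqpoi} together with the duality of Theorem \ref{thm:main}.
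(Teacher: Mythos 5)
Your proposal is correct and follows essentially the same route as the paper: dualize via Theorem \ref{thm:main} to a linear quasi-Poisson structure on $H^*\rightrightarrows C^*$, apply Lemma \ref{lem:l2algqpoi} to the complex of multiplicative functions, and observe that linearity of $\Pi$ and $\pi$ makes the image of \eqref{eq:catsec} under the linear-function embedding an $L_2$-subalgebra. Your extra discussion of identifying the induced brackets with $Q$ and $\qq$ matches the remark the paper places immediately after the proposition.
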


 \begin{proof} 
 By Theorem \ref{thm:main}, the quasi LA-groupoid structure on $H \rightrightarrows E$ is equivalent to a linear quasi-Poisson structure on the dual VB-groupoid $H^* \rightrightarrows C^*$. Now let us observe that we can embed the $2$-term complex \eqref{eq:catsec} into the complex of multiplicative functions of $H^* \rightrightarrows C^*$ by taking each section to its corresponding linear function:
 \[ \begin{tikzcd}
\Gamma(C) \arrow[dd, "\delta_H"'] \arrow[rrr, hook] &  &  & C^\infty(C^*) \arrow[dd, "\delta"] \\        &  &  &                                    \\
\Gamma_{mul}(H) \arrow[rrr, hook]                   &  &  & C^\infty_{mul}(H^*)               
\end{tikzcd}\]
as in Lemma \ref{lem:l2algqpoi}. By linearity of the quasi-Poisson structure on $H^* \rightrightarrows C^*$, the image of \eqref{eq:catsec} is an $L_2$-subalgebra. 
\end{proof}

\begin{rema}  
Naturally, the 2-bracket $[\cdot,\cdot]_2$ on $ \Gamma (C) \xrightarrow{\delta_H} \Gamma_{mul}(H)$ is determined by the pre-Lie algebroid structure given by $Q$ and the 3-bracket $[\cdot,\cdot,\cdot]_3$ is given by the homotopy $\qq$ but the argument above simplifies the proof that they determine an $L_2$-algebra structure.
\end{rema}

Recall that \cite[Thm. 7.3]{ortwal} shows how Morita equivalences of LA-groupoids induce quasi-isomorphism between the corresponding 2-term complexes as in \eqref{eq:catsec}. Here we extend that result to Morita equivalent quasi LA-groupoids by showing that their associated $L_2$-algebra structures as in Proposition \ref{pro:catsec} are quasi-isomorphic. 

\begin{prop}\label{pro:invcatsec} 
Let $(F,\tau):(H[1],Q,q) \rightarrow (\widetilde{H}[1],\widetilde{Q},\widetilde{q})$ be a quasi LA-groupoid morphism such that $F$ is a VB-Morita equivalence over $\phi:G \rightarrow \widetilde{G}$. Then it induces a quasi-isomorphism between their corresponding $L_2$-algebras.
\end{prop}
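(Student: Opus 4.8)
The plan is to dualize the entire situation via Theorem \ref{thm:main} and then invoke the Morita invariance of the $L_2$-algebra of multiplicative functions of a quasi-Poisson groupoid, exactly in the spirit of the proof of Proposition \ref{pro:catsec}.

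First I would pass to the duals. By Theorem \ref{thm:main} the quasi LA-groupoid structures on $H$ and $\widetilde H$ correspond to linear quasi-Poisson structures $(\Pi,\pi)$ and $(\widetilde\Pi,\widetilde\pi)$ on the dual VB-groupoids $H^*\rightrightarrows C^*$ and $\widetilde H^*\rightrightarrows \widetilde C^*$. Since the dual of a VB-Morita map is again VB-Morita (see \cite{mat:vb}), the hypothesis that $F$ is a VB-Morita equivalence over the Morita map $\phi$ translates into a VB-Morita equivalence between these two dual VB-groupoids; moreover, because $(F,\tau)$ is a morphism of quasi $Q$-groupoids, the induced comparison relates the two linear quasi-Poisson structures only up to the homotopy recorded by $\tau$. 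Concretely, the associated Maurer-Cartan elements correspond up to a gauge transformation, as in Lemma \ref{lem:fact1mor} and Proposition \ref{prop:1morgau}.

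Next I would recall, following the proof of Proposition \ref{pro:catsec}, that the $L_2$-algebra on $\Gamma(C)\xrightarrow{\delta_H}\Gamma_{mul}(H)$ is precisely the restriction to \emph{linear} functions of the $L_2$-algebra on the multiplicative functions $C^\infty(C^*)\xrightarrow{\delta}C^\infty_{mul}(H^*)$ of the quasi-Poisson groupoid $H^*\rightrightarrows C^*$ furnished by Lemma \ref{lem:l2algqpoi}, and likewise for $\widetilde H$. The crux is then to upgrade the dualized VB-Morita equivalence to an $L_\infty$ quasi-isomorphism between these $L_2$-algebras of multiplicative functions. This is the quasi-Poisson analogue of the Morita invariance statements of \cite{ortwal}: by Proposition \ref{prop:dgla}, applied to the multivector dgla $\cV^\bullet$, Morita equivalent graded groupoids have $L_\infty$ quasi-isomorphic dglas, and the $L_2$-algebra of Lemma \ref{lem:l2algqpoi} is the derived-bracket structure determined by the Maurer-Cartan element $\Pi+\pi$. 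Since the Maurer-Cartan elements on the two sides correspond under this quasi-isomorphism, up to the gauge datum supplied by $\tau$ from the previous step, the standard compatibility of twisting with $L_\infty$-morphisms produces an $L_\infty$ quasi-isomorphism of the twisted structures, and hence of the $L_2$-algebras on multiplicative functions; this is the quasi-isomorphism induced by $(F,\tau)$.

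Finally I would restrict to linear functions. Each constituent map of the $L_\infty$ quasi-isomorphism is assembled from vector-bundle morphisms and therefore preserves fiberwise-linear functions, so it restricts to the linear sub-$L_2$-algebras, which by the second step are exactly the $L_2$-algebras attached to \eqref{eq:catsec} for $H$ and $\widetilde H$. The linear component of this restricted $L_\infty$-morphism is a chain map between the underlying $2$-term complexes \eqref{eq:catsec}, and this is a quasi-isomorphism of complexes by (the VB-groupoid version of) \cite[Thm.~7.3]{ortwal}; an $L_\infty$-morphism whose linear part is a quasi-isomorphism is an $L_\infty$ quasi-isomorphism, which gives the claim. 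The main obstacle I anticipate lies in these last two steps: one must verify that the $L_\infty$ quasi-isomorphism produced by Morita invariance can be chosen to respect the linear structure, so that it genuinely descends to the linear subcomplexes, and that the twisting by the Maurer-Cartan elements is compatible with the homotopy datum $\tau$. In particular, one should check that $\tau$ supplies precisely the higher, bracket-correcting component of the $L_\infty$-morphism, so that the whole assembly is the morphism induced by $(F,\tau)$ rather than merely an abstractly existing quasi-isomorphism.
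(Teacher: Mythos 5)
Your overall strategy (dualize via Theorem \ref{thm:main}, embed the $2$-term complexes of sections into the multiplicative functions of the dual quasi-Poisson VB-groupoids, and deduce the quasi-isomorphism from Morita invariance of a dgla) is genuinely different from the paper's argument, and it has a real gap at its crux. The Morita-invariance statement you invoke (Proposition \ref{prop:dgla}, or its multivector extension from \cite{bone:1shp}) produces an $L_\infty$ quasi-isomorphism between the dglas $\cV^\bullet$ of multiplicative multivector fields, and twisting by corresponding Maurer--Cartan elements gives a quasi-isomorphism of the \emph{twisted multivector dglas}. But the $L_2$-algebra of Lemma \ref{lem:l2algqpoi} lives on a different complex altogether, namely $C^\infty(C^*)\to C^\infty_{mul}(H^*)$: it is a derived-bracket structure on functions, not a twist of $\cV^\bullet$. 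No mechanism is supplied for transporting an $L_\infty$ quasi-isomorphism of the ambient multivector dglas to one between these function-level $L_2$-algebras, so the step ``and hence of the $L_2$-algebras on multiplicative functions'' does not follow. Moreover, the issues you defer to the end --- that the resulting quasi-isomorphism respects fiberwise-linear functions and that it is actually the morphism \emph{induced by} $(F,\tau)$ rather than an abstract one --- are not optional refinements: the proposition asserts that $(F,\tau)$ itself induces the quasi-isomorphism, so identifying the morphism is part of what must be proved. There is also a directional subtlety in your first step: $F:H\to\widetilde H$ dualizes to a map $\phi^*\widetilde H^*\to H^*$, not to a map between the dual VB-groupoids in the direction you need, which is another reason a purely dual argument is awkward here.

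The paper avoids all of this by working directly with sections. It first produces the zig-zag of quasi-isomorphisms of $2$-term complexes
\[
\Big(\Gamma (C) \xrightarrow{\delta_H} \Gamma_{mul}(H)\Big)\xrightarrow{F'}\Big(\Gamma (\phi^*_0\widetilde{C}) \xrightarrow{{\phi^*\delta_{\widetilde{H}}}} \Gamma_{mul}(\phi^*\widetilde{H})\Big)\xleftarrow{\;i\;}\Big(\Gamma (\widetilde{C}) \xrightarrow{\delta_{\widetilde{H}}} \Gamma_{mul}(\widetilde{H})\Big)
\]
using \cite[Prop.~5.4]{ortwal}, then factors $(F,\tau)$ through the pullback VB-groupoid as $(p,\widetilde\tau)\circ(F',\tau')$, reads off from the natural-transformation condition that $\tau'$ is a bundle map $\tau_0':\wedge^2E\to\phi_0^*\widetilde C$, and sets $\Upsilon(X,Y)=-\tau_0(\mathtt{t}_H(X),\mathtt{t}_H(Y))$ as the explicit higher component of an $L_2$-algebra morphism; the $2$-bracket compatibility is then a direct computation and the $3$-bracket compatibility is exactly the coherence law \eqref{eq:com-1mor}. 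If you want to salvage your dual approach, you would need to prove a separate statement that a Morita equivalence of (linear) quasi-Poisson groupoids induces an $L_\infty$ quasi-isomorphism of the function-level $L_2$-algebras of Lemma \ref{lem:l2algqpoi}, compatibly with linearity --- which is essentially the content you are trying to establish, so little is gained over the direct verification.
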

\begin{proof} A VB-Morita equivalence does not induce, in general, a morphism between the corresponding $2$-term complexes. Instead, it determines a pair of quasi-isomorphisms as follows. We can transport the quasi LA-groupoid structure $(\widetilde{H}[1],\widetilde{Q},\widetilde{q})$ to a quasi-LA groupoid structure on the pullback VB-groupoid structure over $G$ to produce the following diagram 
$$\Big(\Gamma (C) \xrightarrow{\delta_H} \Gamma_{mul}(H)\Big)\xrightarrow{F'}\Big(\Gamma (\phi^*_0\widetilde{C}) \xrightarrow{{\phi^*\delta_{\widetilde{H}}}} \Gamma_{mul}(\phi^*\widetilde{H})\Big)\xleftarrow{i}\Big(\Gamma (\widetilde{C}) \xrightarrow{\delta_{\widetilde{H}}} \Gamma_{mul}(\widetilde{H})\Big)$$

where $F'$ and $i$ are the naturally induced maps and these are quasi-isomorphisms according to \cite[Prop. 5.4]{ortwal}. 

Let us verify now the compatibility of the quasi-isomorphism described above with the brackets. We can construct $(\phi^*\widetilde{H}\rightrightarrows \phi_0^*\widetilde{E},Q',a')$ in such a way that $(F,\tau)$ factors as the composite of $(p,\widetilde{\tau}):(\phi^*\widetilde{H}\rightrightarrows \phi^*\widetilde{H},Q',a') \rightarrow (\widetilde{H}\rightrightarrows \widetilde{H},\widetilde{Q},\widetilde{a})$ and $(F',\tau')$, where $F':H \rightarrow \phi^* \widetilde{H}$ is the map induced by $F$ and $p$ is the projection. 

Let us argue that $F'$ induces an $L_2$-algebra morphism. In this case, we get a morphism of complexes
    \[ \begin{tikzcd}
\Gamma({C}) \arrow[dd, "\delta_H"'] \arrow[rrr, "F'"] &  &  & \Gamma(\phi_0^*\widetilde{C}) \arrow[dd, "\phi^*\delta_{\widetilde{H}}"] \\
&  &  &                               \\
\Gamma_{mul}({H}) \arrow[rrr, "F'"', hook]          &  &  & \Gamma_{mul}(\phi^*\widetilde{H}).         
\end{tikzcd}\]
It follows from the natural transformation condition that $\tau'$ is determined by a vector bundle map $\tau_0':\wedge^2 E\rightarrow \phi_0^*\widetilde{C}$ such that 
\begin{align*} & Q\left((F')^*(\xi)\right)-(F')^*(Q'\xi)=T^*\xi \quad \forall \xi\in\Gamma(\phi^*\widetilde{H}^*), \\
&T:\oplus^2 H\rightarrow \phi^*\widetilde{H} \quad T(u,v)=\tau_0(\mathtt{t}_H(u),\mathtt{t}_H(v))^r-\tau_0(\mathtt{s}_H(u),\mathtt{s}_H(v))^l \quad \forall u,v\in H_g \quad g\in G.
\end{align*} 
As a consequence, the map $\Upsilon:\wedge^2 \Gamma_{mul}({H}) \rightarrow  \Gamma(\phi_0^*\widetilde{C})$ defined by $\Upsilon(X,Y)=-\tau_0(\mathtt{t}_H(X),\mathtt{t}_H(Y))$ for all $X,Y\in \Gamma_{mul}({H}) $
satisfies that
\begin{align*}
    & F'([X,Y]_2)-[F'(X),F'(Y)]_2= \phi^*\delta_{\widetilde{H}}(\Upsilon(X,Y)) \\
    & F'([X,c]_2)-[F'(X),F'(c)]_2= \Upsilon(X,\phi^*\delta_H (c)) \quad \forall X,Y\in \Gamma_{mul}({H}), \, c\in \Gamma(C).
\end{align*}
On the other hand, the coherence law \eqref{eq:com-1mor} for $(F',\tau')$ implies equation
\begin{align*}
    F'([X,Y,Z]_3)+([\Upsilon(X,Y),Z]_2+\text{c.p.})=[F'(X),F'(Y),F'(Z)]_3-(\Upsilon([X,Y]_2,Z)+\text{c.p.})
\end{align*}
for all $X,Y,Z\in \Gamma_{mul}({H})$, where `c.p.' denotes cyclic permutations. As a consequence, $F'$ together with $\Upsilon$ determine an $L_2$-algebra morphism as in \cite[Def. 34]{bacr}. One can argue similarly for $(p,\widetilde{\tau})$.
\end{proof}

\subsection{Lie bialgebroids over Lie groupoids}\label{subsec:catliebia} 
Here we study the Maurer-Cartan elements of the dgla introduced in $\S\ref{sec:pq}$ in the case $n=1$; by doing so we obtain a general categorification of the concept of Lie bialgebroid which specializes to a number of constructions previously considered in the literature.   

Let $(H\rightrightarrows E, G\rightrightarrows M)$ be a VB-groupoid and denote by $(A_H\Rightarrow E, A_G\Rightarrow M)$ its VB-algebroid. As explained in $\S\ref{sec:fun1}$ the functor $[1]$ gives rise to a $1$-graded Lie groupoid $H[1]\rightrightarrows E[1]$ with $1$-graded Lie algebroid $A_{H}[1]\Rightarrow E[1]$. Consider the dgla $(\cV^\bullet_{H[1]}, d, [\cdot,\cdot])$ constructed in $\S\ref{sec:pq}$, a degree $1$ element in $\cV^\bullet_{H[1]}$ is the same as a collection
$$\Psi=\big((f,F),(q,Q),t,(\Pi,\pi),(\Phi,\phi)\big)\in \cV^1_{H[1]}$$
where $f\in C^{4}(E[1])$, $F\in C_{mul}^{3}(H[1])$, $q\in  \Gamma^2(A_H[1])$, $Q\in \fX^{1,1}_{mul}(H[1])$, $t\in \Gamma^{0}(\wedge^2A_H[1])$, $\Pi\in \fX_{mul}^{2,-1}(H[1])$, $\pi\in\Gamma^{-2}(\wedge^3A_H[1]), $ $\Phi\in  \fX_{mul}^{3,-3}(H[1])$ and $\phi\in \Gamma^{-4}(\wedge^4A_H[1]).$ Moreover, $\Psi$ is a Maurer-Cartan element, i.e.  $d\Psi+\frac{1}{2}[\Psi,\Psi]=0$, if it satisfies the following two sets of $5+6$ equations 
\begin{equation}
\def\arraystretch{1.3}
   \begin{array}{r}
      f^r-f^l+\left[Q,F\right]=0,\\
    q^r-q^l+\frac{1}{2}[Q,Q]+[\Pi,F]=0,\\
    t^r-t^l+[\Pi, Q]+[\Phi, F]=0,\\
\pi^r-\pi^l+\frac{1}{2}[\Pi,\Pi]+[Q,\Phi]=0,\\
   \phi^r-\phi^l+[\Pi,\Phi]=0,\\
    \end{array}
    \qquad\qquad
    \begin{array}{r}
         [F,q]+[Q,f]=0,\\
    \left[F,t\right]+\left[Q,q\right]+\left[\Pi,f\right]=0,\\
    \left[F,\pi \right]+\left[Q,t\right]+\left[\Pi,q\right]+\left[\Phi,f\right]=0,\\
    \left[F,\phi \right]+\left[Q,\pi \right]+\left[\Pi,t\right]+\left[\Phi,q\right]=0,\\
    \left[Q,\phi\right]+\left[\Pi,\pi \right]+\left[\Phi,t\right]=0,\\
    \left[\Pi,\phi \right]+\left[\Phi,\pi \right]=0.
    \end{array}
\end{equation}
Let us explain a way to interpret these equations. Firstly, observe that each capital letter is accompanied by a lower letter, the lower letters determine correction terms for some equations involving the capital letters. The pairs $(f,F)$ and $(\Phi,\phi)$ give the ``twisted up to homotopy" versions of the main ``up to homotopy" structures $(q,Q)$ and $(\Pi, \pi)$. The element $t$ is somewhat mysterious to us, we can view it as a homotopy for the compatibility condition.

In the first table we summarize the structures that we codify when the base groupoid $G\rightrightarrows M$ is $*\rightrightarrows*$: 
\begin{center}
    \begin{tabular}{|c|c|l|c|}
\hline
    $\Psi$ & Core & Name & Ref.  \\
    \hline
    $\big((0,0),(0,Q),0,(0,0),(0,0)\big)$ &   $0$& Lie algebra & \\ 
    $\big((0,0),(0,Q),0,(\Pi,0),(0,0)\big)$  &  0& Lie bialgebra & \\ 
    $\big((0,F),(0,Q),0,(\Pi,0),(0,0)\big)$  &  0& quasi-Lie bialgebra & \cite{kos:quasi, roy:on} \\ 
    
    $\big((0,F),(0,Q),0,(\Pi,0),(\Phi,0)\big)$  &  0& proto-bialgebra & \cite{kos:quasi, roy:on}
    \\
     \hline
    $\big((0,0),(0,Q),0,(0,0),(0,0)\big)$ &  & strict $L_2$-algebra & \cite{bacr} \\ 
    $\big((0,0),(q,Q),0,(0,0),(0,0)\big)$  &  & $L_2$-algebra & \cite{bacr}\\ 
    \hline
    $\big((0,0),(0,Q),0,(\Pi,0),(0,0)\big)$  &  & strict Lie 2-bialgebra & \cite{cc:bia, xu:2bia, kra:bia} \\ 
    $\big((0,0),(q,Q),0,(\Pi,\pi),(0,0)\big)$  &  & weak Lie 2-bialgebra & \cite{xu:2bia}\\ 
    $\big((0,0),(q,Q),t,(\Pi,\pi),(0,0)\big)$  &  & $2$-term $L_{\infty}[0,1]$-bialgebra & \cite{cc:bia}\\ 
    $\big((f,F),(q,Q),t,(\Pi,\pi),(\Phi,\phi)\big)$  &  & quasi weak Lie $2$-bialgebra & \cite[Rmk. 2.8]{xu:2bia}\\ 
    \hline
\end{tabular}
\end{center}
The above table can be reproduced by replacing the base groupoid with $M\rightrightarrows M$; at the level of terminology, this amounts to adding \emph{-oid} at the end of each name. Instead of doing that, we give directly the case of a general base:

\begin{center}
    \begin{tabular}{|c|l|c|}
    \hline
    $\Psi$ & Name & Ref.  \\
\hline
    $\big((0,0),(0,Q),0,(0,0),(0,0)\big)$  & LA-groupoid & \\ 
    $\big((0,0),(q,Q),0,(0,0),(0,0)\big)$  &quasi LA-groupoid & $\S\ref{sec:qla}$\\
    \hline
    $\big((0,0),(0,0),0,(\Pi,0),(0,0)\big)$  & PVB-groupoid & \\ 
    $\big((0,0),(0,0),0,(\Pi,\pi),(0,0)\big)$  &quasi PVB-groupoid & $\S\ref{sec:qPVB}$\\
    \hline
    $\big((0,0),(0,Q),0,(\Pi,0),(0,0)\big)$ &Lie-bialgebroid groupoid & \cite{bur:dou}\\ 
    $\big((0,0),(0,Q),0,(\Pi,\pi),(0,0)\big)$ &quasi-Poisson LA-groupoid & $\S$ \ref{sec:douqpoi}\\
    \hline $\big((0,F),(0,Q),0,(\Pi,0),(\Phi,0)\big)$ & multiplicative proto-bialgebroid & \\
    \hline
\end{tabular}
\end{center}
We will analyse some of these objects at the global level in $\S$ \ref{sec:douqpoi}.

\subsection{Examples}\label{sec:exa} As we have seen, quasi LA-groupoids include LA-groupoids and  $L_2$-algebroids as particular cases. Now we will some more concrete examples which illustrate the scope of our viewpoint.

\subsubsection{Cotangent bundles of quasi Poisson groupoids}\label{subsec:cotqpoi} It was shown in \cite{macdou} that the cotangent prolongation of a Poisson groupoid is an LA-groupoid. Here we extend that result by showing how the cotangent of a quasi Poisson groupoid inherits a quasi LA-groupoid structure. 

\begin{prop}\label{pro:cotqpoi} 
Let $(G\rightrightarrows M ,\Pi,\pi)$ be a quasi Poisson groupoid with Lie algebroid $A$. Then $T^*[1]G \rightrightarrows A^*[1]$ with $$\Pi_{can}\in\fX^{2,-1}_{mul}(T^*[1]G),\quad \Theta=\Pi\in C^2_{mul}(T^*[1]G)\quad\text{and}\quad\theta=\pi\in C^3(A^*[1])$$ is a degree $1$ symplectic quasi Q-groupoid. Moreover, any degree  $1$ symplectic quasi Q-groupoid is of this type.
\end{prop}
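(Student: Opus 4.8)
The plan is to deduce the proposition from Proposition \ref{prop:cot} applied to the ordinary (degree $0$) Lie groupoid $G$. With $n=0$, the dgla $(\cV^\bullet_G,d,[\cdot,\cdot])$ of \S\ref{sec:pq} has degree $1$ part $\cV^1_G=\Gamma\wedge^3 A\oplus\fX^2_{mul}(G)$ (all other summands vanish because $G$ carries no internal grading), so its Maurer--Cartan elements $\Pi+\pi$ are exactly the quasi Poisson structures of \eqref{eq:qP}. Part (1) of Proposition \ref{prop:cot} with $l=1$ says that $(T^*[1]G\rightrightarrows A^*[1],\Pi_{can})$ is a degree $1$ symplectic groupoid, which already supplies the non-degenerate multiplicative Poisson bivector demanded by Definition \ref{def:deg2sym}.

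For the first assertion I would then invoke part (2) of Proposition \ref{prop:cot}, again with $n=0$: it gives a bijection between Maurer--Cartan elements of $\cV^\bullet_G$ and pairs $\Theta\in C^2_{mul}(T^*[1]G)$, $\theta\in C^3(A^*[1])$ satisfying \eqref{eq:hommaseq}. Under the standard identifications $C^\bullet(T^*[1]G)\cong\fX^\bullet(G)$ and $C^\bullet(A^*[1])\cong\Gamma\wedge^\bullet A$, the degree $2$ function $\Theta$ is precisely the bivector $\Pi$ and the degree $3$ function $\theta$ is precisely $\pi$, while the Poisson bracket of $\Pi_{can}$ is the Schouten bracket, so $\{\Theta,\Theta\}=[\Pi,\Pi]$. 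The one bookkeeping point to confirm is that the cotangent source and target intertwine $\theta$ with the invariant multivector fields, namely $\gs^*\theta=\pi^l$ and $\gt^*\theta=\pi^r$ (this is the computation already used in the proof of Proposition \ref{prop:cot}, where $d\theta=\gt^*\theta-\gs^*\theta$). Granting this, the two equations of \eqref{eq:hommaseq} read $\tfrac12[\Pi,\Pi]=\pi^l-\pi^r$ and $[\Pi,\pi^r]=0$, which are exactly the quasi Poisson equations \eqref{eq:qP}; hence $(T^*[1]G,\Pi_{can},\Theta=\Pi,\theta=\pi)$ is the asserted degree $1$ symplectic quasi $Q$-groupoid.

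For the ``moreover'' part I would run the correspondence backwards, the only genuinely new ingredient being a rigidity statement for the underlying symplectic object. Starting from a degree $1$ symplectic quasi $Q$-groupoid $(\cG\rightrightarrows\cM,\Pi,\Theta,\theta)$, the bivector $\Pi$ is a non-degenerate multiplicative Poisson structure of degree $-1$ on the $1$-graded groupoid $\cG$. By the classification of degree $1$ symplectic $\bN$-manifolds (see \cite{roy:on}), a degree $1$ symplectic manifold is canonically $T^*[1]$ of its body; I would upgrade this to the groupoid level to identify $\cG$, compatibly with all structure maps, with $(T^*[1]G\rightrightarrows A^*[1],\Pi_{can})$, where $G\rightrightarrows M$ is the body of $\cG$. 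Transporting $(\Theta,\theta)$ through this isomorphism gives a pair satisfying \eqref{eq:hommaseq}, hence by part (2) a quasi Poisson structure $(\Pi_G,\pi_G)$ on $G$, and by construction $(\cG,\Pi,\Theta,\theta)\cong(T^*[1]G,\Pi_{can},\Pi_G,\pi_G)$.

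I expect the main obstacle to be precisely this last rigidity step: showing that a $1$-graded Lie groupoid carrying a non-degenerate multiplicative degree $-1$ Poisson structure is canonically isomorphic, as a symplectic groupoid, to the cotangent groupoid of its body. The object-level version is standard, and the work lies in making the identification respect the multiplication, source, target and unit; once this is in place, everything else is the formal translation provided by Proposition \ref{prop:cot} together with the Schouten-bracket description of $\Pi_{can}$.
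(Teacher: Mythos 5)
Your proposal is correct and follows essentially the same route as the paper: the paper likewise observes that the canonical Poisson bracket on $T^*[1]G$ is the Schouten bracket, so that the quasi Poisson equations \eqref{eq:qP} translate directly into \eqref{eq:hommaseq} (your appeal to Proposition \ref{prop:cot} with $n=0$ is just the packaged form of this), and the ``moreover'' part is handled by the same Roytenberg classification of degree $1$ symplectic manifolds. You are in fact more explicit than the paper about the remaining rigidity step --- upgrading the pointwise symplectomorphism $\cG\cong T^*[1]G$ to an identification of symplectic groupoids --- which the paper asserts without further comment.
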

\begin{proof}
    Clearly $\Pi_{can}$ is a multiplicative and non-degenerate Poisson structure with bracket given by the Schouten bracket. Thus equations \eqref{eq:qP} become \eqref{eq:hommaseq}. For the moreover part recall that a degree $1$ symplectic manifold is symplectomorphic to $(T^*[1]M, \Pi_{can})$. Therefore, any degree 1 symplectic groupoid is symplectomorphic to $T^*[1]G\rightrightarrows A^*[1]$ and this concludes the proof.
\end{proof}

We can apply Theorem \ref{thm:main} to the quasi LA-groupoid determined by a quasi Poisson groupoid as in Proposition \ref{pro:cotqpoi} to conclude the following result.

\begin{coro} 
The tangent prolongation of a quasi Poisson groupoid is canonically a quasi Poisson groupoid. \qed\end{coro}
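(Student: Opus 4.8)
The plan is to read the statement off as a direct application of the duality of Theorem \ref{thm:main} to the quasi LA-groupoid attached to the given quasi Poisson groupoid, so that no bracket identities need to be verified by hand. Start with a quasi Poisson groupoid $(G\rightrightarrows M,\Pi,\pi)$ with Lie algebroid $A$. By Proposition \ref{pro:cotqpoi} the cotangent prolongation $T^*[1]G\rightrightarrows A^*[1]$ carries a degree $1$ symplectic quasi $Q$-groupoid structure, and the Corollary following Definition \ref{def:deg2sym} extracts from it a genuine quasi $Q$-groupoid structure $(Q,\qq)$ with $Q=\{\Theta,\cdot\}$ and $\qq^r=\{\gt^*\theta,\cdot\}$. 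Now $T^*[1]G=(T^*G)[1]$ is the degree $1$ manifold of the vector bundle $T^*G\to G$ and $A^*[1]=(A^*)[1]$, so by Definition \ref{def:qla} this says precisely that the VB-groupoid $(T^*G\rightrightarrows A^*;\,G\rightrightarrows M)$ is a quasi LA-groupoid.

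Next I would dualize. The dual VB-groupoid of $(T^*G\rightrightarrows A^*;\,G\rightrightarrows M)$ is $((T^*G)^*\rightrightarrows C^*;\,G\rightrightarrows M)$, where $C$ is its core. Identifying $(T^*G)^*=TG$ and recognising the core of the cotangent groupoid as $C=T^*M$, so that $C^*=TM$, the dual is exactly the tangent prolongation $(TG\rightrightarrows TM;\,G\rightrightarrows M)$. Theorem \ref{thm:main} then hands us, for free, a quasi PVB-groupoid structure on $TG\rightrightarrows TM$: a linear multiplicative $\Pi_T\in\fX^2_{mul}(TG)$ together with a linear $\pi_T\in\Gamma\wedge^3(A_{TG}\to TM)$ satisfying the quasi-Poisson equations \eqref{eq:qP}.

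Finally I would forget the linearity. By Definition \ref{def:qPVB} a quasi PVB-groupoid is in particular a pair $(\Pi_T,\pi_T)$ with $\Pi_T\in\fX^2_{mul}(TG)$ and $\pi_T\in\Gamma\wedge^3 A_{TG}$ satisfying \eqref{eq:qP}, which is exactly the data of a quasi Poisson groupoid structure on the Lie groupoid $TG\rightrightarrows TM$ (the $Q=0,\ \qq=0$ case of a degree $0$ quasi P-groupoid in Definition \ref{def:qpqgro}). Hence the tangent prolongation $TG\rightrightarrows TM$ is a quasi Poisson groupoid, and the construction is canonical since it used only the canonical symplectic structure on $T^*[1]G$ and the canonical VB-duality.

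The main obstacle I anticipate is the bookkeeping of cores and shifts: checking that $T^*[1]G$ really is the $1$-manifold $H[1]$ of the VB-groupoid $T^*G$ with the correct base $A^*[1]$, and that the core of the cotangent groupoid is $T^*M$ so that its dual recovers $TG\rightrightarrows TM$ with base $TM$. One also wants to observe, though it is not needed for the bare statement, that $A_{TG}\cong TA$, so that $\pi_T$ is a section of $\wedge^3 TA$ and $(\Pi_T,\pi_T)$ are the expected tangent lifts of $(\Pi,\pi)$; verifying that these lifts satisfy \eqref{eq:qP} is precisely what the duality of Theorem \ref{thm:main} guarantees automatically, which is the whole reason for routing the argument through the dual supergeometric picture rather than computing the tangent lift directly.
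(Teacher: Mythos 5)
Your proposal is correct and follows essentially the same route as the paper: the paper derives this corollary precisely by applying Theorem \ref{thm:main} to the quasi LA-groupoid on $T^*[1]G\rightrightarrows A^*[1]$ produced by Proposition \ref{pro:cotqpoi}, and observing that the dual VB-groupoid is the tangent prolongation $TG\rightrightarrows TM$. Your extra bookkeeping of the core identification $C=T^*M$, $C^*=TM$ is exactly the implicit step the paper leaves to the reader.
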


As natural as this Corollary appears, we haven't found it in the literature. Another immediate consequence of Propositions \ref{pro:catsec} and \ref{pro:cotqpoi} is the following.

\begin{coro}[\cite{l2algqpoi}]\label{cor:2terqpoi} 
Let $(G\rightrightarrows M ,\Pi,\pi)$ be a quasi Poisson groupoid, then the 2-term complex
\[ \delta:\Omega^1(M) \rightarrow \Omega^1_{mul}(G) \]
 naturally inherits an $L_2$-algebra structure. Moreover, a Morita equivalence of quasi Poisson groupoids induces a quasi-isomorphism in the corresponding 2-term complexes \qed \end{coro}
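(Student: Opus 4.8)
The plan is to recognise the $2$-term complex $\delta\colon\Omega^1(M)\to\Omega^1_{mul}(G)$ as the complex of sections \eqref{eq:catsec} of the cotangent quasi LA-groupoid furnished by Proposition \ref{pro:cotqpoi}, and then to read off both assertions from Propositions \ref{pro:catsec} and \ref{pro:invcatsec}.

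First I would apply Proposition \ref{pro:cotqpoi}: the quasi Poisson groupoid $(G\rightrightarrows M,\Pi,\pi)$ makes $T^*[1]G\rightrightarrows A^*[1]$ a degree $1$ symplectic quasi $Q$-groupoid, which in particular is a quasi $Q$-groupoid, so by Definition \ref{def:qla} the VB-groupoid $(T^*G\rightrightarrows A^*;G\rightrightarrows M)$ is a quasi LA-groupoid. Its core is canonically $T^*M$, whence $\Gamma_{mul}(H)=\Omega^1_{mul}(G)$, $\Gamma(C)=\Omega^1(M)$, and the differential $u\mapsto u^r-u^l$ of \eqref{eq:catsec} becomes $\delta\colon\omega\mapsto\mathtt{s}^*\omega-\mathtt{t}^*\omega$, i.e. the de Rham extension of the map in Lemma \ref{lem:l2algqpoi}. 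Proposition \ref{pro:catsec} then equips $\Omega^1(M)\xrightarrow{\delta}\Omega^1_{mul}(G)$ with an $L_2$-algebra structure, and by the Remark following Proposition \ref{pro:catsec} its $2$-bracket is governed by $Q$ while its $3$-bracket is governed by $\pi$; these are exactly the brackets of \cite{l2algqpoi}.

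For the Morita statement I would realise a Morita equivalence of quasi Poisson groupoids as a VB-Morita quasi LA-groupoid morphism of the corresponding cotangent groupoids and then invoke Proposition \ref{pro:invcatsec}. Since the cotangent functor is not covariant, the convenient route is to dualise via Theorem \ref{thm:main}: the dual VB-groupoid of $(T^*G\rightrightarrows A^*;G\rightrightarrows M)$ is the tangent groupoid $(TG\rightrightarrows TM;G\rightrightarrows M)$ carrying the linear multiplicative quasi-Poisson structure determined by $(\Pi,\pi)$. The tangent functor is covariant and sends weak equivalences of Lie groupoids to VB-Morita equivalences (cf. \cite{mat:vb}), so a Morita equivalence of quasi Poisson groupoids lifts to a VB-Morita equivalence of the associated tangent quasi PVB-groupoids intertwining their quasi-Poisson structures; dualising this again through Theorem \ref{thm:main} produces the required VB-Morita quasi LA-groupoid morphism, and Proposition \ref{pro:invcatsec} delivers the quasi-isomorphism.

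The main obstacle is this last lifting. I must check that along each leg of the defining zig-zag the weak equivalence is compatible with the multiplicative quasi-Poisson data, so that the lifted map is not merely a VB-Morita equivalence but underlies a genuine $1$-morphism of quasi LA-groupoids: concretely, that the natural transformation $\tau$ of Proposition \ref{pro:invcatsec} exists and satisfies the coherence law \eqref{eq:com-1mor}. Granting this compatibility, which is where the quasi Poisson (as opposed to merely groupoid) structure enters, Proposition \ref{pro:invcatsec} closes the argument.
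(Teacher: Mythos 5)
Your proposal is correct and follows exactly the paper's route: the paper derives this corollary as an immediate consequence of Proposition \ref{pro:cotqpoi} (identifying the cotangent of the quasi Poisson groupoid as a quasi LA-groupoid with core $T^*M$) combined with Propositions \ref{pro:catsec} and \ref{pro:invcatsec}. Your additional care in lifting the Morita equivalence to the (co)tangent level via Theorem \ref{thm:main} simply fills in details the paper leaves implicit.
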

 
 \begin{rema} 
 The explicit form of the 3-bracket can be deduced by following our previous arguments 
 see \cite[Thm. 3.1]{l2algqpoi}. The $L_2$-algebra structure above can be extended to the entire spaces of differential forms $\Omega^\bullet(M) \rightarrow \Omega_{mul}^\bullet(G)$ \cite[Thm. 3.6]{l2algqpoi}. Note that our use of Theorem \ref{thm:main}  allows us to essentially reduce that construction to the case of (degree 0) functions.
 \end{rema}

\subsubsection{Central extensions and prequantum algebroids}\label{subsec:cenext}
Recall that $(\bR\rightrightarrows 0, *\rightrightarrows *)$ is a VB-groupoid with multiplication given by addition. The corresponding degree $1$-groupoid is denoted by $\bR[1]\rightrightarrows 0$. If   $(H\rightrightarrows E, G\rightrightarrows M)$ is another VB-groupoid, their sum $(H\oplus \bR_G\rightrightarrows E, G\rightrightarrows M)$ is again a VB-groupoid, in graded terms this is given by the  cartesian product $H[1]\times \bR[1]\rightrightarrows E[1]$. The next result shows one way to construct quasi $Q$-groupoid structures on this product.

\begin{prop}\label{prop:cenEx}
    Let $(H[1]\rightrightarrows E[1], Q, \qq)$ be a degree $1$ quasi $Q$-groupoid, $h\in C^2_{mul}(H[1])$ and $\xi\in C^3(E[1])$. If 
    \begin{equation}\label{eq:cocy}
        Q(h)=\gs^*_{H[1]}\xi-\gt^*_{H[1]}\xi\quad\text{and}\quad Q(\gt^*_{H[1]}\xi)=\gt^*_{H[1]}Q_0(\xi)=\qq^r(h)
    \end{equation}
    then $(H[1]\times \bR[1]\rightrightarrows E[1], \widehat{Q}, \widehat{\qq})$ is a quasi $Q$-groupoid with
    \begin{equation*}
        \begin{split}
        \widehat{Q}(h_0+\lambda h_1)=&Q(h_0)+h\wedge h_1-\lambda Q(h_1),\\
        \widehat{\qq}^r(h_0+\lambda h_1)=&\qq^r(h_0)+(\gt^*_{H[1]}\xi)\wedge h_1+\lambda \qq^r(h_1),
    \end{split}
    \end{equation*}
    where $h_0,h_1\in C^\bullet(H[1])$ and $\lambda$ is the degree $1$ coordinate function of $\bR[1].$
\end{prop}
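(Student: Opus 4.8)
The plan is to verify directly that $(\widehat Q,\widehat\qq)$ satisfies the two identities of Definition \ref{def:qqgpd}, after checking that $\widehat Q$ and $\widehat\qq$ have the required type. Throughout I use that every function on $H[1]\times\bR[1]$ is uniquely of the form $h_0+\lambda h_1$ with $h_0,h_1\in C^\bullet(H[1])$, so a derivation is determined by its values on $C^\bullet(H[1])$ and on $\lambda$. Reading off the stated formulas, $\widehat Q=Q+h\,\partial_\lambda$ and $\widehat\qq^{\,r}=\qq^r+(\gt^*_{H[1]}\xi)\,\partial_\lambda$, where $\partial_\lambda$ is the degree $-1$ derivation dual to $\lambda$ and $Q,\qq^r$ are extended to the product by annihilating $\lambda$; in particular $\widehat Q(\lambda)=h$ and $\widehat\qq^{\,r}(\lambda)=\gt^*_{H[1]}\xi$. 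A degree count ($|h|=2$, $|\gt^*\xi|=3$, $|\partial_\lambda|=-1$) gives $\widehat Q\in\fX^{1,1}$ and $\widehat\qq^{\,r}\in\fX^{1,2}$.

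First I would establish the structural claims. Since $\widehat Q$ restricts to $Q$ on $C^\bullet(H[1])$ and leaves the base field $Q_0$ on $E[1]$ unchanged, its multiplicativity reduces to that of the product field $Q\times 0$ (manifestly multiplicative) together with the term $h\,\partial_\lambda$. Because the coordinate $\lambda$ is primitive, $\gm^*\lambda=\lambda_1+\lambda_2$, and $h\in C^2_{mul}(H[1])$ satisfies $\gm^*h=\pr_1^*h+\pr_2^*h$, the groupoid-morphism condition for $h\,\partial_\lambda$ holds on generators: it acts trivially on $C^\bullet(H[1])$, and on $\lambda$ the two sides match via $\gm^*h=\pr_1^*h+\pr_2^*h$. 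Hence $\widehat Q\in\fX^{1,1}_{mul}(H[1]\times\bR[1])$ with base field $Q_0$. Likewise $\qq^r$ and $(\gt^*\xi)\partial_\lambda$ are right invariant (the coefficient $\gt^*\xi$ is constant along right translations and $\partial_\lambda$ is invariant in the central factor), so $\widehat\qq^{\,r}$ is the right invariant field of a well defined section $\widehat\qq\in\Gamma^2\cA$. Since the $\bR[1]$-factor is central and abelian, the matching left invariant field differs only by pulling the coefficient back along $\gs$ rather than $\gt$, giving $\widehat\qq^{\,l}=\qq^l+(\gs^*_{H[1]}\xi)\partial_\lambda$; in particular $\widehat\qq^{\,l}(h_0)=\qq^l(h_0)$ and $\widehat\qq^{\,l}(\lambda)=\gs^*_{H[1]}\xi$.

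With these identifications the two equations are immediate. On $f\in C^\bullet(H[1])$ every field involved acts through its $H[1]$-part, so $\widehat Q^2(f)=Q^2(f)=\qq^l(f)-\qq^r(f)=(\widehat\qq^{\,l}-\widehat\qq^{\,r})(f)$ and $[\widehat Q,\widehat\qq^{\,r}](f)=[Q,\qq^r](f)=0$, using that $(H[1],Q,\qq)$ is a quasi $Q$-groupoid. It remains to evaluate on $\lambda$. Here $\widehat Q^2(\lambda)=\widehat Q(h)=Q(h)$, while $(\widehat\qq^{\,l}-\widehat\qq^{\,r})(\lambda)=\gs^*\xi-\gt^*\xi$; these agree precisely by the first equation of \eqref{eq:cocy}. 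Since $|\widehat Q|\,|\widehat\qq^{\,r}|=2$ is even the graded commutator is the ordinary one, so $[\widehat Q,\widehat\qq^{\,r}](\lambda)=\widehat Q(\gt^*\xi)-\widehat\qq^{\,r}(h)=Q(\gt^*\xi)-\qq^r(h)$, which vanishes precisely by the second equation of \eqref{eq:cocy}. This gives $\widehat Q^2=\widehat\qq^{\,l}-\widehat\qq^{\,r}$ and $[\widehat Q,\widehat\qq^{\,r}]=0$, so $(H[1]\times\bR[1],\widehat Q,\widehat\qq)$ is a quasi $Q$-groupoid.

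The main obstacle is the structural step rather than the final computation: one must confirm that $\widehat Q$ is genuinely multiplicative and that $\widehat\qq^{\,r}$ is the right invariant field of an honest degree $2$ section, which requires care with the (shifted) degree conventions in the central $\bR[1]$-direction and with identifying the matching left invariant field $\widehat\qq^{\,l}$. Once these are pinned down, the verification of the defining identities is a short calculation in which each of the two conditions in \eqref{eq:cocy} is used exactly once.
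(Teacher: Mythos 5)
Your proposal is correct and follows essentially the same route as the paper's proof: both reduce everything to the observation that $\widehat Q$ and $\widehat\qq$ restrict to $Q$ and $\qq$ on $C^\bullet(H[1])$, check multiplicativity of $\widehat Q$ on $\lambda$ via the multiplicativity of $h$, and then verify $\widehat Q^2=\widehat\qq^{\,l}-\widehat\qq^{\,r}$ and $[\widehat Q,\widehat\qq^{\,r}]=0$ on $\lambda$ using the two equations of \eqref{eq:cocy} exactly once each. Your write-up is somewhat more explicit about the structural points (the $\partial_\lambda$ notation and the identification of $\widehat\qq^{\,l}$), but this is elaboration rather than a different argument.
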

\begin{proof}
    Observe that $\cA_{H[1]\times \bR[1]}=\cA_{H[1]}\times \bR[1]$.  If  $\lambda$ denote the degree $1$ coordinate function of $\bR[1]$ then  $\lambda^2=0$ and  a degree $k$ function in $H[1]\times \bR[1]$ is given by $h_0+\lambda h_1$ with $h_0\in C^k(H[1])$ and $h_1\in C^{k-1}(H[1]).$ Thus one can easily check that $\widehat{Q}\in \fX^{1,1}(H[1]\times \bR[1])$ and $\widehat{\qq}\in\Gamma^2\cA_{H[1]\times \bR[1]}.$ 

    Notice that $(\widehat{Q},\widehat{\qq})$ is a quasi $Q$-structure satisfying $$\widehat{Q}_{|C^k(H[1])}=Q\quad \text{and}\quad \widehat{\qq}_{|\cA_{H[1]}}=\qq.$$ Hence, to show the result  we just need to check  how $(\widehat{Q},\widehat{\qq})$ acts on  $\lambda$. The vector field $\widehat{Q}$ is multiplicative because 
    $$\gm_{H[1]\times \bR[1]}^*\widehat{Q}(\lambda)=\gm_{H[1]}^*h=\pr_1^*h+\pr_2^*h=\pr_1^*\widehat{Q}(\lambda)+\pr_2^*\widehat{Q}(\lambda)=\widehat{Q}^{(2)}\gm_{H[1]\times \bR[1]}^*\lambda$$
    due to the multiplicativity of $h$. The first equation in \eqref{eq:cocy} implies that
    \begin{equation*}
    \begin{split}
        \widehat{Q}^2(\lambda)&=Q(h)=\gs^*_{H[1]}\xi-\gt^*_{H[1]}\xi= (\widehat{\qq}^l-\widehat{\qq}^r)(\lambda).
    \end{split}
    \end{equation*}
    Finally,
    \begin{equation*}
          [\widehat{Q}, \widehat{\qq}^r](\lambda)=\widehat{Q}\big( \widehat{\qq}^r(\lambda)\big)-  \widehat{\qq}^r\big(\widehat{Q}(\lambda)\big)=Q(\gt^*_{H[1]}\xi)-\qq^r(h)=0
    \end{equation*}
    by the second equation in \eqref{eq:cocy}.
\end{proof}

\begin{coro}[Classification of central extensions]\label{cor:ex}
    Let $(H[1]\rightrightarrows E[1], Q, a)$ be a degree $1$ quasi $Q$-groupoid and $(K[1]\rightrightarrows E[1], \widehat{Q}, \widehat{\qq})$ a central extension. 
    \begin{enumerate}
        \item The choice of an splitting $\sigma$ for the exact sequence $0\to \bR[1]\to K[1]\to H[1]\to 0$ makes the central extension isomorphic to the one constructed in Proposition \ref{prop:cenEx} with $h=\sigma^*\widehat{Q}(\lambda)$ and $\gt^*_{H[1]}\xi=\sigma^*\widehat{\qq}^r(\lambda).$
        \item The central extensions defined by $(h,\xi)$ and $(h',\xi')$ are equivalent if and only if there is an element $b\in C^2(E[1])$ such that $\xi-\xi'=Q_0(b)$ and $h-h'=\gs^*_{H[1]}b-\gt^*_{H[1]}b.$
    \end{enumerate}
\end{coro}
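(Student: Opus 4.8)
For part (1), the plan is to reduce everything to the behaviour of the structure on the central fibre coordinate. A splitting $\sigma$ of the exact sequence of VB-groupoids underlying $0\to\bR[1]\to K[1]\to H[1]\to 0$ identifies $K[1]\cong H[1]\times\bR[1]$ as graded Lie groupoids, so that every function decomposes uniquely as $h_0+\lambda h_1$ with $h_0,h_1\in C^\bullet(H[1])$ and $\lambda$ the degree $1$ fibre coordinate; since the multiplication on $\bR[1]$ is addition, $\lambda$ is multiplicative. I would then set $h:=\sigma^*\widehat{Q}(\lambda)\in C^2_{mul}(H[1])$, which is multiplicative because $\widehat{Q}$ and $\lambda$ are, and observe that $\widehat{\qq}^r(\lambda)$ is a $\gt$-pullback (being the value of a right invariant field on a multiplicative function), hence defines a unique $\xi\in C^3(E[1])$ through $\gt^*_{H[1]}\xi:=\sigma^*\widehat{\qq}^r(\lambda)$. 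Because the projection $K[1]\to H[1]$ is a morphism of quasi $Q$-groupoids, $\widehat{Q}$ and $\widehat{\qq}$ restrict on $C^\bullet(H[1])$ to the given $(Q,\qq)$. Evaluating the defining equation $\widehat{Q}^2=\widehat{\qq}^l-\widehat{\qq}^r$ on $\lambda$ then produces the first relation of \eqref{eq:cocy}, namely $Q(h)=\gs^*_{H[1]}\xi-\gt^*_{H[1]}\xi$, and evaluating $[\widehat{Q},\widehat{\qq}^r]=0$ on $\lambda$ produces the second, $Q(\gt^*_{H[1]}\xi)=\qq^r(h)$. Finally, the structure manufactured by Proposition \ref{prop:cenEx} out of this $(h,\xi)$ agrees with $(\widehat{Q},\widehat{\qq})$ on the generators $C^\bullet(H[1])$ and $\lambda$, so the two quasi $Q$-groupoid structures coincide and the extension is the one of Proposition \ref{prop:cenEx}.

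For part (2), I would read an equivalence of central extensions over the fixed split groupoid $H[1]\times\bR[1]$ as an isomorphism of quasi $Q$-groupoids inducing the identity on $H[1]$ and on $\bR[1]$. By Proposition \ref{prop:1morgau} any such isomorphism with underlying identity groupoid morphism is a gauge transformation, and compatibility with the exact sequence forces its parameter to be central, i.e. a $b\in C^2(E[1])$ viewed---exactly as in the proof of Theorem \ref{l2algqla}---as a degree $1$ section of $\cA_{K[1]}$ valued in the $\bR$-summand of the core. Since the central line is abelian we have $[b,b]=0$, so the gauge formula of the Remark on gauge transformations collapses to $\widehat{Q}'=\widehat{Q}+b^l-b^r$ and $\widehat{\qq}'=\widehat{\qq}+[\widehat{Q},b^r]\circ\gu$. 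Evaluating these on $\lambda$ and using part (1) to read off $(h',\xi')$ yields $h'=h-(\gs^*_{H[1]}b-\gt^*_{H[1]}b)$ and $\xi'=\xi-Q_0(b)$, which is precisely the stated coboundary relation; the converse direction runs the same computation backwards, using the two relations to manufacture $b$ and hence the equivalence.

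The crux, and the step I expect to be the main obstacle, is the pair of identities $(b^l-b^r)(\lambda)=\gt^*_{H[1]}b-\gs^*_{H[1]}b$ and $[\widehat{Q},b^r](\lambda)=-\gt^*_{H[1]}Q_0(b)$ for the central parameter $b$. These are immediate over a unit groupoid, where the explicit formulas for $b^l$ and $b^r$ established in the proof of Theorem \ref{l2algqla} apply verbatim; over a general base $G\rightrightarrows M$ one must instead track how a core section $b\in\Gamma(\wedge^2E^*)=C^2(E[1])$ acts on the multiplicative coordinate $\lambda$, which I would handle through the classical description of $Q$ and of $\widehat{\qq}^r$ given in \S\ref{sec:qla}. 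Everything else follows formally from the quasi $Q$-groupoid axioms and the gauge-transformation formula.
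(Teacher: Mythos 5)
Your proposal is correct and follows essentially the same route as the paper: part (1) by evaluating $\widehat{Q}{}^2=\widehat{\qq}^l-\widehat{\qq}^r$ and $[\widehat{Q},\widehat{\qq}^r]=0$ on the central coordinate $\lambda$ (using centrality to commute $\sigma^*$ past the structure maps), and part (2) by reducing to an automorphism of the form $(1,\tau)$, invoking Proposition \ref{prop:1morgau} to obtain a gauge transformation whose parameter is forced into $C^2(E[1])$, and evaluating the gauge formula on $\lambda$ with $[b,b]=0$. The only point the paper makes that you leave implicit is that, by degree reasons, the classes $(h,\xi)$ do not depend on the choice of splitting, which is what justifies fixing the split model before comparing two extensions.
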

\begin{proof}
    Observe that, since $\lambda$ is a multiplicative function, then $h=\sigma^*\widehat{Q}(\lambda)$ and $\gt^*_{H[1]}\xi=\sigma^*\widehat{\qq}^r(\lambda)$ makes sense and so $h$ is multiplicative. Let us verify the equations in \eqref{eq:cocy}
    $$Q(h)=Q(\sigma^*\widehat{Q}(\lambda))=\sigma^*\widehat{Q}^2(\lambda)=\sigma^*(\widehat{\qq}^l-\widehat{\qq}^r)(\lambda)=\gs^*_{H[1]}\xi-\gt^*_{H[1]}\xi,$$
    $$Q(\gt^*_{H[1]}\xi)=Q(\sigma^*\widehat{\qq}^r(\lambda))=\sigma^*\widehat{Q}(\widehat{\qq}^r(\lambda))=\sigma^*\widehat{\qq}^r(\widehat{Q}(\lambda))=\qq^r(\sigma^*\widehat{Q}(\lambda))=\qq^r(h),$$
    where, in the second equality of each equation, we used that $\lambda$ is central. The isomorphism part is left to the reader. 

    For the second item, note that by degree reasons a different choice of splitting gives the same $h$ and $\xi$. Thus we can suppose that the automorphism of $H[1]\times \bR[1]$ is of the form $(1,\tau).$ Therefore, Proposition \ref{prop:1morgau} implies that we get a gauge transformation for some $b\in\Gamma^1\cA_{H[1]\times \bR[1]}$. Since such section must preserve $H[1]$, we get that  $b:E[1]\to\bR[2]$ thus $b\in C^2(E[1])$. Since $\bR[1]$ is an abelian algebra we get that the gauge transformation equation becomes $\xi-\xi'=[Q_0, b]=Q_0(b)$ and $h-h'=\gs^*_{H[1]}b-\gt^*_{H[1]}b.$
\end{proof}

Some relevant examples of the above construction are given by \emph{Atiyah algebroids} of $S^1$-\emph{gerbes} and \emph{equivariant $S^1$-gerbes}. First we recall the case of an  $S^1$-principal bundles $P\to M$. Here the Atiyah algebroid is the bundle $A_P=TP/S^1$ which fits into the exact sequence
$$0\to\bR_M\to A_P\to TM\to 0.$$
Any principal connection, i.e. a splitting of the above sequence, determines a Lie algebroid isomorphism $A_P\cong TM\oplus \bR_M=A_\omega$ with bracket $$[(X,f),(Y,g)]=([X,Y], X(g)-X(f)-i_Xi_Y\omega),$$ 
where $\omega\in \Omega^2_{cl}(M)$ is the curvature of the connection.

A relevant case of the above is given by the prequantum bundle of a symplectic manifold $(M,\omega)$. Here one starts with $(M,\omega)$, and thus with $A_\omega$, and need to  find an $S^1$-principal bundle $P\to M$ and a principal connection making $A_P\cong A_\omega$. This problem was answered in \cite{kos:quan}, see also \cite{craqua}. 

In what follows we give the analogues of $A_\omega$ for closed $3$-forms on manifolds and closed $1$-shifted $2$-forms on Lie groupoids (the latter case includes the former). The link with higher connections and Atiyah algebroids will be explained in $\S$ \ref{sec:ati}.

\begin{exa}[Closed $3$-forms]
Let $M$ be a manifold, $H\in \Omega^3(M)$ with $dH=0$ and consider the LA-groupoid $(TM\rightrightarrows TM, M\rightrightarrows M)$ then $(T[1]M\rightrightarrows T[1]M, Q=d_{dr})$ with $h=0$ and $\xi=H$ satisfy the hypothesis of Proposition \ref{prop:cenEx}. Thus we get a quasi LA-groupoid on $(TM\oplus \bR_M\rightrightarrows TM, M\rightrightarrows M)$. Using Theorem \ref{l2algqla}  one can show that this is exactly the  $L_2$-algebroid in $\bR_M\to TM$ given by 
$$\rho=\id,\quad  \partial=0,\quad [X,Y]=[X,Y],\quad \nabla=0 \quad \text{and}\quad  K(X,Y,Z)=i_Xi_Yi_ZH$$
for $X,Y,Z\in\fX(M).$ 
\end{exa}

An extension of the previous example is given by \emph{closed $1$-shifted $2$-forms} on Lie groupoids.

\begin{exa}[Closed $+1$ shifted $2$-forms]\label{ex:preq}
Recall, see e.g. \cite{BCWZ, xu:mom}, that a \emph{closed  $1$-shifted $2$-form} on a Lie groupoid $G\rightrightarrows M$  is an $\omega\in\Omega^2(G)$ and an $H\in \Omega^3(M)$ satisfying 
$$\gm^*\omega=\pr_1^*\omega+\pr^*_2\omega,\quad d\omega=\gs^*H-\gt^*H\quad\text{and}\quad dH=0.$$
Then the quasi $Q$-groupoid $(T[1]G\rightrightarrows T[1]M, Q=d_{dr})$ with $h=\omega$ and $\xi=H$ satisfy the hypothesis of Proposition \ref{prop:cenEx}. Thus we get the quasi LA-groupoid
\begin{equation}\begin{array}{c}
    \label{eq:presh}
     \xymatrix{TG\oplus \bR_G\ar@<-.5ex>[r]\ar@<.5ex>[r]\ar@{==>}[d]&TM\ar@{==>}[d]\\
 G\ar@<-.5ex>[r]\ar@<.5ex>[r]&M.} 
\end{array}
\end{equation}
On one hand, this example generalizes the LA-groupoid that encodes the prequantization of a symplectic groupoid, see \cite{weixu}; on the other hand, it provides a geometric construction for general $1$-shifted symplectic groupoids that was missing in the literature. Moreover, it was shown in \cite{kre:pre, xu:qsq} that equivariant gerbes give prequantizations for $1$-shifted symplectic groupoids, thus we expect that a connection with curving should induce an isomorphism between the Atiyah algebroid of the equivariant gerbe and \eqref{eq:presh}.
\end{exa}

\begin{exa}[Gauge transformations and shifted Lagrangians]\label{ex:gaupreq}
    Let $(G\rightrightarrows M, \omega+H)$ be a Lie groupoid endowed with a closed $+1$ shifted $2$-form as in the preivous example. Now assume that in addition we get a $B\in\Omega^2(M)$ satisfying
    $$\omega=\gs^*B-\gt^*B\quad\text{and}\quad H=dB.$$
    The above equations are the ones given in Corollary \ref{cor:ex}. Hence, we get that $B$ gives a gauge transformation between the trivial extension and the prequantum bundle of $(\omega,H)$ as defined in Example \ref{ex:preq}.  

    When $(G\rightrightarrows M, \omega+H)$ is a $1$-shifted symplectic groupoid, an example of this situation is given by their \emph{Hamiltonian spaces}, see e.g. \cite{xu:mom}. Indeed, their quantization was considered in \cite{kre:pre} using relative equivariant $S^1$-gerbes. 
\end{exa}

\begin{exa}[Quasi Poisson groupoids]
    We already saw in Proposition \ref{pro:cotqpoi} that the cotangent of a quasi Poisson groupoid $(G\rightrightarrows M ,\Pi,\pi)$
  is a  quasi LA-groupoid. Now we can reinterpret  $$\Pi\in C^2_{mul}(T^*[1]G)\quad \text{and}\quad\pi\in C^3(A^*[1])$$ 
  in the light of Proposition \ref{prop:cenEx} and note that the quasi Poisson equations give us exactly the equations in \eqref{eq:cocy}. Then we get that the cotangent bundle of a quasi Poisson groupoid has a central extension that is again a quasi LA-groupoid
 \begin{equation}\begin{array}{c}
    \label{eq:preqp}
     \xymatrix{T^*G\oplus \bR_G\ar@<-.5ex>[r]\ar@<.5ex>[r]\ar@{==>}[d]&A^*\ar@{==>}[d]\\
 G\ar@<-.5ex>[r]\ar@<.5ex>[r]&M.} 
\end{array}
\end{equation}
This gives us the higher analogue of the prequantum bundle studied in \cite{hue:poi} in the case of Poisson manifolds.
\end{exa}

\subsubsection{$L_2$-algebra actions on Lie groupoids}\label{sec:act}

Recall that a \emph{$L_2$-algebra} is a tuple\footnote{For simplicity we consider $\nabla$ as part of $[\cdot,\cdot]$.} $(\fh\xrightarrow{\partial}\fg, [\cdot,\cdot],K)$ as in \eqref{l2a}  satisfying \eqref{eq:L1}-\eqref{eq:L4}. In this case, Theorem \ref{l2algqla} recovers the result in \cite{bacr} which shows that  $(\fh\to\fg, \partial, [\cdot,\cdot],K)$ is equivalent to the quasi LA-groupoid $((\fh\oplus\fg\rightrightarrows \fg; *\rightrightarrows *), Q_{CE}, q=K).$

\begin{defi}\label{def:act}
    An \emph{infinitesimal action of an $L_2$-algebra $(\fh\xrightarrow{\partial}\fg, [\cdot,\cdot],K)$ on a Lie groupoid $G\rightrightarrows M$} is an $L_\infty$-morphism $\Psi:(\fh\xrightarrow{\partial}\fg, [\cdot,\cdot],K)\to (\Gamma A\to \fX_{mul}(G), d, [\cdot,\cdot]).$ 
\end{defi}

A characterization of $L_\infty$-algebra actions on graded manifolds was given in  \cite[Theorem 4.4]{raj:act}. Along the same lines, we can give the  following characterization.

\begin{prop}
 Let $(\fh\xrightarrow{\partial}\fg, [\cdot,\cdot],K)$ be a $L_2$-algebra with associated  quasi LA-groupoid $((\fh\oplus\fg\rightrightarrows \fg; *\rightrightarrows *), Q_{CE}, q_{CE}=K)$. There is a one to one correspondence between:
 \begin{enumerate}
     \item Infinitesimal actions of a $L_2$-algebra $(\fh\xrightarrow{\partial}\fg, [\cdot,\cdot],K)$ on a Lie groupoid $G\rightrightarrows M$.
     \item Quasi LA-groupoid structures on $(\fh\oplus\fg)\times G\rightrightarrows \fg\times M$ for which the projection to $((\fh\oplus\fg\rightrightarrows \fg; *\rightrightarrows *), Q_{CE}, q_{CE})$ is a quasi LA-groupoid morphism.
 \end{enumerate}
\end{prop}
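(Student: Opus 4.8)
The plan is to mimic the characterization of $L_\infty$-actions on graded manifolds from \cite[Thm.\ 4.4]{raj:act}, but phrased through the Maurer--Cartan picture of Proposition \ref{prop:MC} rather than through bare homological vector fields. Write $\mathfrak{A}=(\mathfrak{h}\oplus\mathfrak{g})[1]\rightrightarrows\mathfrak{g}[1]$ for the quasi LA-groupoid over the point attached to the $L_2$-algebra by Theorem \ref{l2algqla}, so that its quasi $Q$-structure is $(Q_{CE},q_{CE}=K)$, and write $\cG_{\mathrm{prod}}=\mathfrak{A}\times G$ for the product graded groupoid over $\mathfrak{g}[1]\times M$ underlying $(\mathfrak{h}\oplus\mathfrak{g})\times G\rightrightarrows\mathfrak{g}\times M$, with projection $p:\cG_{\mathrm{prod}}\to\mathfrak{A}$. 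By Proposition \ref{prop:MC} a quasi $Q$-groupoid structure $(\tilde Q,\tilde q)$ on $\cG_{\mathrm{prod}}$ is a Maurer--Cartan element $\tilde q+\tilde Q$ of the dgla $L_{\cG_{\mathrm{prod}}}$ of Proposition \ref{prop:dgla}; the requirement that $p$ be a (strict) morphism of quasi $Q$-groupoids says precisely that $\tilde Q$ is $p$-related to $Q_{CE}$, i.e.\ $T[1]p\circ\tilde Q=Q_{CE}\circ p$, and that $\tilde q$ projects to $q_{CE}$.

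First I would make the splitting of $L_{\cG_{\mathrm{prod}}}$ explicit. As $\cG_{\mathrm{prod}}$ is a Cartesian product, its graded algebroid is $\cA_{\cG_{\mathrm{prod}}}=\cA_{\mathfrak{A}}\times A$ with $A=\Lie(G)$, and the multiplicative functions, sections and vector fields decompose by exterior degree in the fibre coordinates of $\mathfrak{g}[1]$ and $\mathfrak{h}[1]$. Imposing that the $\mathfrak{A}$-component of $(\tilde Q,\tilde q)$ equal $(Q_{CE},q_{CE})$ — and using that $\fX^{1,1}_{mul}(G)=0$ and $\Gamma^2A=0$ because $G$ is an ordinary groupoid, so there is no purely-$G$ contribution — a degree count shows that the remaining mixed data consists of exactly three homogeneous pieces: a term linear in the $\mathfrak{g}[1]$-coordinates valued in $\fX_{mul}(G)$, giving $\Psi_0:\mathfrak{g}\to\fX_{mul}(G)$; a term in the core ($\mathfrak{h}[1]$) directions valued in $\Gamma A$, giving $\Psi_1:\mathfrak{h}\to\Gamma A$; and a term quadratic in the $\mathfrak{g}[1]$-coordinates valued in $\Gamma A\subset\Gamma^2\cA_{\cG_{\mathrm{prod}}}$, giving $\Psi_2:\wedge^2\mathfrak{g}\to\Gamma A$. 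All higher pieces vanish for degree reasons, so this data is exactly a triple $(\Psi_0,\Psi_1,\Psi_2)$ of the shape appearing in a morphism of $2$-term $L_\infty$-algebras \cite[Def.\ 34]{bacr}.

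The core of the argument is then to check that the two quasi $Q$-groupoid equations $\tilde Q^2=\tilde q^l-\tilde q^r$ and $[\tilde Q,\tilde q^r]=0$, decomposed into their homogeneous components in the fibre coordinates, reproduce exactly the Baez--Crans relations for $(\Psi_0,\Psi_1,\Psi_2)$ to be an $L_\infty$-morphism into the dgla $(\Gamma A\to\fX_{mul}(G),d,[\cdot,\cdot])$ of $\S\ref{sec:MC}$: the component with no $G$-content recovers the structure equations of $(Q_{CE},q_{CE})$, the linear component gives the chain-map condition $\Psi_1(h)^r-\Psi_1(h)^l=\Psi_0(\partial h)$, the bracket-up-to-$\Psi_2$ identity $\Psi_0([\xi,\eta])-[\Psi_0(\xi),\Psi_0(\eta)]=\Psi_2(\xi,\eta)^r-\Psi_2(\xi,\eta)^l$, and the $\nabla$-compatibility $\Psi_1(\nabla_\xi c)-\Psi_2(\xi,\partial c)=[\Psi_0(\xi),\Psi_1(c)^r]\circ\gu$, while the quadratic component yields the coherence relation tying together $K$, $\Psi_2$ and the Jacobiator. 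Since every step is reversible, this gives the desired bijection; a final check that the correspondence is natural identifies it at the level of the morphisms of $\S\ref{sec:mor}$ as well.

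The main obstacle I expect is the bookkeeping in the middle step: locating each component inside the product dgla — in particular $\Psi_1$, which lives in the core/$\mathfrak{h}[1]$-directions of $\tilde Q$ rather than in $\tilde q$ — and verifying that the homogeneous components of the two quasi $Q$-equations match the Baez--Crans identities term by term with the correct signs. A secondary point to pin down is the precise meaning of ``$p$ is a morphism'': one should check that the only natural transformation compatible with the projection onto a Cartesian factor is the trivial one (by Proposition \ref{prop:NT} such a $\tau$ would be a section of $\cA_{\cG_{\mathrm{prod}}}$ projecting to $0$ on $\mathfrak{A}$ and supported off $G$, hence zero), so that ``morphism'' here unambiguously means strict $p$-relatedness of the quasi $Q$-structures — which is exactly what makes the degree count above clean and explains why, in contrast with \cite[Thm.\ 4.4]{raj:act}, no separate ``restriction to the fibre'' condition is needed.
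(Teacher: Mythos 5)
The paper does not actually prove this proposition: it is stated with no proof environment, justified only by the remark that it goes ``along the same lines'' as the characterization of $L_\infty$-actions on graded manifolds in \cite[Theorem 4.4]{raj:act}. Your Maurer--Cartan strategy --- decompose $(\tilde Q,\tilde q)$ by polynomial degree in the $\fg[1]$- and $\fh[1]$-coordinates, use $p$-relatedness to fix the $\fA$-component and the vanishing of $\fX^{1,1}_{mul}(G)$ and $\Gamma^2 A$ to isolate the mixed terms, and match the resulting identities with the Baez--Crans conditions for an $L_\infty$-morphism into $(\Gamma A\to\fX_{mul}(G),d,[\cdot,\cdot])$ --- is exactly the intended argument, and your identification of the free data with a triple $(\Psi_0,\Psi_1,\Psi_2)$ is correct.

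Two points need repair. First, and this is a genuine gap in the step you yourself flag as delicate: your parenthetical claim that any natural transformation $\tau$ making $(p,\tau)$ a $1$-morphism must vanish is false. Running the analysis of Proposition \ref{prop:1morgau} for the non-identity functor $p$, such a $\tau$ differs from the trivial one by a degree $1$ section of $p_0^*\cA_{(\fh\oplus\fg)[1]}$ over $\fg[1]\times M$, i.e.\ by an element of $\hom(\wedge^2\fg,\fh)\otimes C^\infty(M)$, which is nonzero whenever $\fh\neq 0$ and $\dim\fg\geq 2$. So ``the projection is a quasi LA-groupoid morphism'' does not automatically reduce to strict $p$-relatedness; to get the bijection as stated you must either read the condition strictly (i.e.\ require $(p,0)$ to be a morphism, which is what Definition \ref{def:act} matches) or show that every datum $(\tilde Q,\tilde q,\tau)$ with $\tau\neq 0$ is gauge-equivalent to a strict one, in which case the correspondence holds only up to that equivalence. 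Second, a smaller bookkeeping slip: the fact that $\Psi_0$ lands in $\fX_{mul}(G)$, that the $\fh^*$-component of $\tilde Q|_{C^\infty(G)}$ is of the form $h\mapsto\Psi_1(h)^r$, and the chain-map identity $\Psi_1(h)^r-\Psi_1(h)^l=\Psi_0(\partial h)$ all come from the \emph{multiplicativity} of $\tilde Q$ (matching the coefficients of $\pr_1^*\eta^*$ and $\pr_2^*\eta^*$ in $\gm^*\tilde Q(f)=\tilde Q^{(2)}\gm^*f$), not from the two displayed equations $\tilde Q^2=\tilde q^l-\tilde q^r$ and $[\tilde Q,\tilde q^r]=0$; those two equations supply only the bracket, $\nabla$, and $K$-coherence identities. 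Neither point changes the overall plan, but the first one must be addressed for the proof to be complete.
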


On the one hand, the above actions are a global counterpart of the actions considered in \cite{cat:poired, raj:act}. On the other hand, they are the infinitesimal counterpart of the Lie 2-group actions on differentiable stacks considered in \cite{bur:prin,rey:ham}. 

\begin{rema}
    The above Proposition allows us to define infinitesimal actions of quasi LA-groupoids on Lie groupoids by considering projectable quasi LA-groupoids on the pullback of a groupoid morphism. 
\end{rema}

Consider an $n$-graded Lie groupoid $\cG\rightrightarrows \cM$  with $n$-graded Lie algebroid $\cA\to\cM$. Then $(\Gamma^\bullet \cA\to \fX_{mul}^{1,\bullet}(\cG),[\cdot,\cdot],d)$ is a $\bZ$-graded  $L_2$-algebra as in \cite[\S 2.1]{bone:1shp}. Therefore, one can extend Definition \ref{def:act} to graded groupoids by considering actions of $\bZ$-graded  $L_2$-algebra as $L_\infty$-morphisms that preserves the $\bZ$-degree. In particular, for the $\bZ$-graded Lie algebra $(\bR[-1])$ we get the following result. 

\begin{prop}\label{prop:actR1}
   Let $\cG\rightrightarrows \cM$ be an $n$-graded Lie groupoid.  There is a one to one correspondence between:
   \begin{itemize}
       \item Infinitesimal actions of the $\bZ$-graded Lie algebra $\bR[-1]$ on $\cG\rightrightarrows \cM$.
       \item Quasi $Q$-groupoid structures on $\cG\rightrightarrows \cM$.
   \end{itemize}
\end{prop}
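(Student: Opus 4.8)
The plan is to unwind the definition of an infinitesimal action and identify the resulting data with a Maurer--Cartan element of $L_\cG$, so that the statement becomes a direct consequence of Proposition \ref{prop:MC}. By the graded extension of Definition \ref{def:act} described just above, an infinitesimal action of $\bR[-1]$ on $\cG\rightrightarrows\cM$ is an $L_\infty$-morphism
\[
\Psi\colon \bR[-1]\longrightarrow \big(\Gamma^\bullet\cA\to\fX^{1,\bullet}_{mul}(\cG),\ d,\ [\cdot,\cdot]\big)
\]
that preserves the $\bZ$-grading (the weight). Here $\bR[-1]$ is the one-dimensional abelian Lie algebra placed in weight $1$ and in $L_2$-degree $0$; let $e$ be its generator. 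I would first record that $\Psi$ is encoded by its Taylor coefficients $\Psi_k$, each of which is weight-preserving, so that $\Psi_k(e^{\odot k})$ is a weight-$k$ element of $L_\cG$ of total degree $1$.

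The key reduction is a double degree count. Viewed in the $L_2$-direction, $L_\cG$ is the two-term object $\Gamma^\bullet\cA\to\fX^{1,\bullet}_{mul}(\cG)$, and the $k$-th Taylor coefficient $\Psi_k$ has $L_2$-degree $1-k$; hence $\Psi_k$ can be nonzero only for $k\in\{1,2\}$. Combining this with weight preservation and the decomposition $L^1_\cG=\Gamma^2\cA\oplus\fX^{1,1}_{mul}(\cG)$ forces $\Psi_1(e)=Q\in\fX^{1,1}_{mul}(\cG)$ (the weight-$1$ summand) and $\Psi_2(e\odot e)=\qq\in\Gamma^2\cA$ (the weight-$2$ summand), while all higher coefficients vanish for lack of elements of weight $\geq 3$ in $L^1_\cG$. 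Thus the morphism is exactly the datum of the degree-$1$ element $X=\qq+Q\in L^1_\cG$ appearing in Proposition \ref{prop:MC}.

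It then remains to check that the $L_\infty$-morphism relations collapse to the Maurer--Cartan equation for $X$. Since the source $\bR[-1]$ is abelian and carries no differential, the ``source side'' of every relation vanishes, and since $L_\cG$ is a dgla only $d$ and $[\cdot,\cdot]$ survive on the target side. The arity-$1$ relation reads $dQ=0$, which holds automatically because $d$ vanishes on $\fX^{1,\bullet}_{mul}(\cG)$; the arity-$2$ relation gives $d\qq+\tfrac12[Q,Q]=0$, i.e. $\qq^r-\qq^l+Q^2=0$; and the arity-$3$ relation gives $[Q,\qq]=[Q,\qq^r]\circ\gu=0$. These are precisely the two defining equations of a quasi $Q$-groupoid, and they are exactly the components of $dX+\tfrac12[X,X]=0$ recorded in the proof of Proposition \ref{prop:MC}. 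Reading the argument backwards, a quasi $Q$-groupoid structure $(Q,\qq)$ determines $\Psi_1,\Psi_2$ by the same formulas and all relations are satisfied, which establishes the claimed bijection.

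The step that I expect to demand the most care is keeping the two gradings apart while using them simultaneously: the homological $L_2$-degree is what truncates the Taylor series at $k=2$, whereas the weight is what splits $L^1_\cG$ into its $\fX_{mul}$ and $\Gamma\cA$ parts, and one must verify that the combinatorial coefficients in the $L_\infty$-relations reproduce both the factor $\tfrac12$ turning $\tfrac12[Q,Q]$ into $Q^2$ and the identity $[Q,\qq]=[Q,\qq^r]\circ\gu$ coming from the dgla bracket of Proposition \ref{prop:dgla}. Once this bookkeeping is pinned down, Proposition \ref{prop:MC} closes the argument.
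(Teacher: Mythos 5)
Your proposal is correct and follows essentially the same route as the paper: the paper's proof likewise observes that $\wedge^2\bR[-1]=\sym^2\bR=\bR$, so the $L_\infty$-morphism is determined by the two Taylor coefficients $\Psi(1)=Q\in\fX^{1,1}_{mul}(\cG)$ and $K(1)=\qq\in\Gamma^2\cA$, and that the morphism relations reduce to $0=\tfrac12[Q,Q]+dK(1)$ and $0=[K(1),Q]$, which are the quasi $Q$-groupoid equations. Your version merely spells out the degree/weight bookkeeping that truncates the Taylor series at arity $2$ and makes the appeal to Proposition \ref{prop:MC} explicit.
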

\begin{proof}
    First notice that $\wedge^2\bR[-1]=\sym^2\bR=\bR$. Therefore, an infinitesimal action of the $\bZ$-graded Lie algebra $\bR[-1]$ on $\cG\rightrightarrows \cM$ is the same as two linear maps
    $$\Psi:\bR\to \fX_{mul}^{1,1}(\cG)\quad\text{and}\quad  K:\bR\to \Gamma^2\cA$$
    satisfying
    $$0=\frac{1}{2}[\Psi(1),\Psi(1)]+dK(1)\quad \text{and}\quad 0=[K(1),\Psi(1)].$$
    Hence, $Q=\Psi(1)$ and $q=K(1)$ is a quasi $Q$-structure.
\end{proof}

This observation extends the characterization of $Q$-manifolds as actions of $\bR[-1]$ to the world of quasi $Q$-manifolds.

\subsubsection{2-term representations up to homotopy}\label{subsec:ruth} 
Let $A\to M$ be a Lie algebroid and let $\mathbf{V}:=V_0\xrightarrow{\partial} V_1$ be a 2-term complex of vector bundles over $M$. A representation up to homotopy of $A$ on $\mathbf{V}$ consists of a pair of $A$-connections $\nabla^i$ on $V_i$ for $i=0,1$ and a 2-form $\omega\in \Omega^2(A,\hom(V_1,V_0))$ such that 
\begin{align*}
    \partial \circ\nabla^1=\nabla^0\circ \partial, \quad \partial\circ \omega=\omega_1, \quad \omega\circ \partial=\omega_0, \quad \nabla \omega=0;
\end{align*}
where $\omega_i$ is the curvature of $\nabla^i$ and $\nabla$ is the connection determined by $\nabla^0$ and $\nabla^1$ on $\Omega^\bullet(A,\hom(V_1,V_0))$.

It was pointed out in \cite{highol, raj:2rep} that such a representation up to homotopy can be described as a morphism of $L_2$-algebroids $A\rightarrow \mathfrak{gl}({\bf V})$ where
$$\mathfrak{gl}({\bf V}):=\hom(V_1,V_0)\xrightarrow{(\partial\circ-,-\circ\partial)}\der(V_1)\times_{TM}\der(V_0)$$
with the natural two bracket and zero 3-bracket. Thanks to Theorem \ref{l2algqla}, such objects are in one to one correspondence with morphisms between the quasi LA-groupoids 
\begin{equation*}
    \xymatrix{ A\ar@<-.5ex>[d]\ar@<.5ex>[d]\ar[rr]^{0\oplus (\nabla_0,\nabla_1)\qquad\quad}&& \mathfrak{gl}_1({\bf V})\oplus \mathfrak{gl}_0({\bf V})\ar@<-.5ex>[d]\ar@<.5ex>[d] \\
    A\ar[rr]_{(\nabla_0,\nabla_1)\qquad}\ar[rru]^\omega&& \mathfrak{gl}_0({\bf V})}
\end{equation*}

We think that this interpretation makes their integration in terms of 2-functors more natural, see \cite{int2ter}.

\subsubsection{Atiyah algebroids of 2-bundles and 2-connections}\label{sec:ati} Throughout this section we will use the following notation. Let us denote by $H \rtimes K \rightrightarrows K$ a strict Lie 2-group \cite{grp}, a \emph{principal 2-bundle} is a Lie groupoid $\mathbf{P} \rightrightarrows \mathbf{Q}$ equipped with a multiplicative free and proper action of $H \rtimes K \rightrightarrows K$ such that the quotient is a manifold $M$ regarded as a unit groupoid $M \rightrightarrows M$. Such objects were identified in \cite{higauthe} as another avatar of the nonabelian bundle gerbes of \cite{nonabger, difgeoger}, see also \cite{kon:4-2bun} for alternative definitions. 

Let us start by giving some context that will serve as a motivation for our definition. A connection on a principal $G$-bundle $P$ can be defined globally as a horizontal invariant distribution but it is often more convenient to view them as a splitting of the exact sequence
$$0\to P\times_G\fg\to \mathbb{A}_P\to TM\to 0,$$
where $\mathbb{A}_P=TP/G$ is the Atiyah algebroid of $P$, see e.g. \cite[$\S$5]{mac:book}. Moreover, this point of view allow us to characterize the flat connections as the splitting that are Lie algebroid morphisms. Now we will show how, following the same approach, we can recover the notion of connection for a 2-bundle. The Atiyah algebroids of $\mathbf{P} $ and $\mathbf{Q} $ fit into an LA-groupoid 
\[ \mathbb{A}_{\mathbf{P} } \rightrightarrows \mathbb{A}_{\mathbf{Q} }.  \]
Then we have the canonical projections $\pi_i$ to the tangent of the base manifold constitute a morphism of LA-groupoids. A splitting of those projection maps compatible with the corresponding VB-groupoid structure is given by a single bundle map $F_0$ as in the following diagram
\[ \begin{tikzcd}
\mathbb{A}_{\mathbf{P} } \arrow[d, shift right] \arrow[d, shift left] \arrow[rr, "\pi_1"'] &  & TM \arrow[d, shift right] \arrow[d, shift left] \arrow[ll, "\mathtt{u}\circ F_0"', dotted, bend right] \\
\mathbb{A}_{\mathbf{Q} } \arrow[rr, "\pi_0"']                                              &  & TM \arrow[ll, "F_0"', dotted, bend right]                             
\end{tikzcd}\]

\begin{defi}\label{def:2con} 
A {\em 2-connection} on $\mathbf{P} \rightrightarrows \mathbf{Q}$ is a pair $(F,\tau_0)$, where $\tau_0:\wedge^2 TM \rightarrow C$ is a vector bundle morphism and $C$ is the core of the VB-groupoid $\mathbb{A}_{\mathbf{P} } \rightrightarrows \mathbb{A}_{\mathbf{Q} }$, such that
\begin{align}
    d F_0 +\frac{1}{2}[F_0,F_0]=\partial\tau_0; \label{eq:fakcur}
\end{align} 
where $\partial:C \rightarrow \mathbb{A}_{\mathbf{Q} }$ is the differential in the core sequence of $\mathbb{A}_{\mathbf{P} } \rightrightarrows \mathbb{A}_{\mathbf{Q} }$. \end{defi}

\begin{rema} Equation \eqref{eq:fakcur} is called the vanishing of the `fake curvature' condition \cite{difgeoger}. In principle, we could drop such a requirement for $\tau_0$ in the definition of a 2-connection (this is the approach taken in \cite{kon:con} for instance, see Remark \ref{rem:2con2}). However, this equation is necessary for being able to define two-dimensional holonomy in a reasonable sense \cite[Prop. 2.14]{higauthe}, see also \cite{surhol}.
\end{rema}

The obstruction for $(F,\tau)$ to satisfy the coherence law \eqref{eq:com-1mor} of a quasi LA-groupoid morphism is what we define as the the {\em 3-form curvature} of the 2-connection:
\begin{align}
\Omega_{(F,\tau)}:=d \tau_0 +[F_0,\tau_0]. \label{eq:3forcur}   
\end{align}

 \begin{prop} \label{prop:curv}
 The maps $(F,\tau)$ defined above determine a quasi LA-groupoid morphism 
 \[ (F,\tau):(TM \rightrightarrows TM) \rightarrow (\mathbb{A}_{\mathbf{P} } \rightrightarrows \mathbb{A}_{\mathbf{Q} } )\]
 if and only if \eqref{eq:3forcur} vanishes. 
 \end{prop}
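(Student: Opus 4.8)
The plan is to unwind the definition of a quasi LA-groupoid morphism from $\S\ref{sec:mor}$ in this concrete situation and to match the coherence law \eqref{eq:com-1mor} with the vanishing of \eqref{eq:3forcur}. First I would record that both the source $(TM\rightrightarrows TM)$ and the target $\mathbb{A}_{\mathbf{P}}\rightrightarrows\mathbb{A}_{\mathbf{Q}}$ are honest LA-groupoids, hence, after applying the functor $[1]$ and invoking Proposition \ref{prop:raj}, genuine $Q$-groupoids: the associated homological vector fields $Q_{\mathcal{G}}$ (the de Rham differential $d_{dr}$ on $T[1]M$) and $Q_{\mathcal{H}}$ satisfy $Q_{\mathcal{G}}^2=0$ and $Q_{\mathcal{H}}^2=0$, so that the homotopies vanish, $q_{\mathcal{G}}=q_{\mathcal{H}}=0$. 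Under the equivalence of Proposition \ref{prop:raj} and the discussion preceding Definition \ref{def:2con}, the graded groupoid morphism $F$ is determined by the splitting $F_0\colon TM\to\mathbb{A}_{\mathbf{Q}}$ together with its lift $\mathtt{u}\circ F_0$ on top, while the natural transformation $\tau$ is determined by the bundle map $\tau_0\colon\wedge^2 TM\to C$.

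Second, I would verify that for a $2$-connection the pair $(F,\tau)$ already satisfies every requirement of a $1$-morphism except possibly the coherence law. Indeed, that $F$ is a graded groupoid morphism is exactly the condition that $F_0$ split the VB-groupoid structure; and the fake-curvature equation \eqref{eq:fakcur} is precisely what equates the source and target of $\tau$, namely $\mathtt{s}\circ\tau=T[1]F\circ Q_{\mathcal{G}}$ and $\mathtt{t}\circ\tau=Q_{\mathcal{H}}\circ F$ with $\pi\circ\tau=\mathtt{u}$, making $\tau$ a bona fide natural transformation $T[1]F\circ Q_{\mathcal{G}}\Rightarrow Q_{\mathcal{H}}\circ F$. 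Here one uses that the curvature $dF_0+\tfrac12[F_0,F_0]$ of the base connection measures the failure of $F$ to intertwine $Q_{\mathcal{G}}$ and $Q_{\mathcal{H}}$, while $\partial\tau_0$ is its core correction; multiplicativity of $\tau$ is automatic because the source is a unit groupoid.

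Third, since $q_{\mathcal{G}}=q_{\mathcal{H}}=0$, the coherence law \eqref{eq:com-1mor} collapses: its left-hand side $q_{\mathcal{H}}\diamond 1_F$ is the trivial natural transformation and the factor $1_{T[2]F}\diamond q_{\mathcal{G}}$ on the right disappears, so \eqref{eq:com-1mor} becomes
\[ \left(1_{T[1]Q_{\mathcal{H}}}\diamond\tau\right)\bullet\left(T[1]\tau\diamond 1_{Q_{\mathcal{G}}}\right)=0, \]
an identity of natural transformations $0\Rightarrow Q_{\mathcal{H}}^2\circ F=0$, i.e. an identity of core sections. The heart of the argument is to translate this composite into classical terms: using the identification $T[1]Q=\mathcal{L}_Q$ recalled just after \eqref{eq:com-1mor}, the factor $T[1]\tau\diamond 1_{Q_{\mathcal{G}}}$ produces the exterior derivative $d\tau_0$ (differentiating $\tau_0$ along $Q_{\mathcal{G}}=d_{dr}$), whereas $1_{T[1]Q_{\mathcal{H}}}\diamond\tau$ produces $[F_0,\tau_0]$ (the Lie derivative along $Q_{\mathcal{H}}$ acting on $\tau$, which by the Chevalley--Eilenberg formula \eqref{cart} is the action of the connection $F_0$ on the core-valued form $\tau_0$). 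Their vertical composite is thus the covariant differential $d\tau_0+[F_0,\tau_0]=\Omega_{(F,\tau)}$, and the coherence law holds if and only if this section vanishes, which is the claim; this exhibits flat $2$-connections as the quasi LA-groupoid morphisms, in parallel with the classical fact that flat connections are Lie algebroid morphisms.

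I expect the main obstacle to be precisely this last translation: carefully unwinding the horizontal composition $\diamond$ and the vertical composition $\bullet$ of the three natural transformations in terms of the double vector bundle $T[1]T[1]\mathbb{A}_{\mathbf{P}}[1]$ and its core $T[2]\mathbb{A}_{\mathbf{P}}[1]$, and matching the two resulting summands with $d\tau_0$ and $[F_0,\tau_0]$ through \eqref{cart} with the correct signs. Everything else is bookkeeping once the dictionary between the supergeometric and classical descriptions, set up in $\S\ref{sec:fun1}$ and $\S\ref{sec:qla}$, is in place.
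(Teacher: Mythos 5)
Your argument is correct and reaches the paper's conclusion, but it is organized differently: you propose to unwind the definition of a $1$-morphism and the coherence law \eqref{eq:com-1mor} from scratch in this example, whereas the paper simply observes that both $TM\rightrightarrows TM$ and $\mathbb{A}_{\mathbf{P}}\rightrightarrows\mathbb{A}_{\mathbf{Q}}$ are VB-groupoids over a unit groupoid, invokes Theorem \ref{l2algqla} to translate ``quasi LA-groupoid morphism'' into the $L_2$-algebroid morphism equations \eqref{eq:l2mor0}--\eqref{eq:l2mor3}, and then reads off that \eqref{eq:l2mor0} and \eqref{eq:l2mor2} are automatic, \eqref{eq:l2mor1} is the fake-curvature equation \eqref{eq:fakcur}, and \eqref{eq:l2mor3} (with both Jacobiators zero) is exactly the vanishing of \eqref{eq:3forcur}. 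The step you flag as ``the main obstacle'' --- matching the composite $\left(1_{T[1]Q_{\mathcal{H}}}\diamond\tau\right)\bullet\left(T[1]\tau\diamond 1_{Q_{\mathcal{G}}}\right)$ with $d\tau_0+[F_0,\tau_0]$ via \eqref{cart} --- is precisely the content of the coherence-law translation already carried out in the proof of Theorem \ref{l2algqla}, so rather than redoing that sign-sensitive computation you can, and probably should, just cite the theorem; that is the only real economy the paper's route buys. Your preliminary observations (both sides are honest LA-groupoids so $q_{\mathcal{G}}=q_{\mathcal{H}}=0$, $F$ is determined by $F_0$ and $\mathtt{u}\circ F_0$, $\tau$ by $\tau_0:\wedge^2 TM\to C$, and \eqref{eq:fakcur} is the naturality condition on $\tau$) are all consistent with how the paper sets things up before the proposition, so modulo replacing the hand computation by an appeal to Theorem \ref{l2algqla}, the two proofs coincide.
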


\begin{proof} 
Since both are VB-groupoids over a unit groupoid and $TM\rightrightarrows TM$ does not have core we can use Theorem \ref{l2algqla} to conclude that a quasi LA-groupoid morphism is given by $F_0$ and a vector bundle map $\tau_0:\wedge^2 TM \rightarrow C$, where $C$ is the core of the VB-groupoid $\mathbb{A}_{\mathbf{P} } \rightrightarrows \mathbb{A}_{\mathbf{Q} }$, satisfying equations \eqref{eq:l2mor0}-\eqref{eq:l2mor3}. In this case \eqref{eq:l2mor0} and \eqref{eq:l2mor2} are automatic,  \eqref{eq:l2mor1} is equivalent to \eqref{eq:fakcur} and the vanishing of \eqref{eq:3forcur} is exactly \eqref{eq:l2mor3}.
\end{proof}

\begin{rema}\label{rem:2con2}
It follows from the previous result that a {\em flat 2-connection} is exactly a splitting of $\pi=(\pi_0,\pi_1)$ which is a quasi-LA groupoid morphism (equivalently, an $L_2$-algebroid morphism). A flat 2-connection in our sense is also flat as in \cite[Def. 5.1.5]{kon:con}. Since we demand multiplicativity in Definition \ref{def:2con}, our 2-connections are {\em fake-flat connections} in the sense of \cite[Def. 5.1.5]{kon:con}. A closely related result is \cite[Thm. 5.43]{cri-seb} where the authors work at the level of the Lie 2-algebras of multiplicative sections rather than with the LA-groupoids themselves. 
\end{rema}

Let us briefly indicate how our approach gives a unified conceptual interpretation for the definitions in \cite{higyanmil,higauthe,difger}. 

\begin{exa}[Connections on principal 2-bundles \cite{higyanmil,higauthe}]
    Rephrasing our discussion at the beginning of this subsection, let $H \rtimes K \rightrightarrows K$ be a strict Lie 2-group and consider the trivial principal  $2$-bundle over a manifold $M$ regarded as a unit groupoid. Then a 2-connection in our sense on the corresponding Atiyah algebroid recovers exactly the definition of a $2$-connection with vanishing fake curvature \eqref{eq:fakcur}.
\end{exa}

     One can also consider principal $2$-bundles whose base space is an arbitrary groupoid $G\rightrightarrows M$ instead of just the unit groupoid $M\rightrightarrows M$ of a manifold. Those objects are called \emph{principal bundle groupoids} in \cite{gar:PBG} (see also \cite{cri-seb}), we use the same notation as before. Due to Proposition \ref{prop:curv} we can extend Definition \ref{def:2con} to principal bundle groupoids by saying that a \emph{flat $2$-connection on a principal bundle groupoid} ${\mathbf{P} } \rightrightarrows \mathbf{Q}$ is a quasi LA-groupoid morphism 
     $$(F,\tau):(TG\rightrightarrows TM)\to (\mathbb{A}_{\mathbf{P} } \rightrightarrows \mathbb{A}_{\mathbf{Q} } )$$
    that is an splitting for the anchor of the Atiyah algebroid. If $(F,\tau)$ does not satisfy the coherence law \eqref{eq:com-1mor}, we say that  $(F,\tau)$ is a $2$-connection.

    A relevant example of this more general framework is the following one.

\begin{exa}[Connective structures on gerbes over differentiable stacks \cite{difger}] 
An $\mathbb{S}^1$-central extension of a Lie groupoid $G\rightrightarrows M$ can be viewed as a Lie groupoid $P \rightrightarrows M$ equipped with a free multiplicative $\mathbb{S}^1$-action:
\[ \begin{tikzcd}
P \arrow[d, "\pi"'] \arrow["\mathbb{S}^1"', loop, distance=2em, in=215, out=145] \arrow[rd, shift right] \arrow[rd, shift left] &   & {\text{$\mathtt{m}(a\cdot p,b\cdot q)=(ab)\cdot\mathtt{m}(p,q)$ for all $a,b\in\mathbb{S}^1$ and all $p,q\in P$;}} \\
G \arrow[r, shift right] \arrow[r, shift left]     & M &     
\end{tikzcd}\]
where $\pi$ is the quotient map. Hence, $P\rightrightarrows M$ gives a principal  bundle groupoid.

A \emph{connection and a curving} on such a principal bundle \cite[Def. 4.20]{difger} is a form $\theta\in \Omega^1(P)$ which is a usual principal connection together with $B\in \Omega^2(M)$ such that
$$\delta\theta=0\quad\text{and}\quad d\theta=\delta B.$$
The \emph{$3$-curvature} of the connective structure $(\theta,B)$ is $H:=dB\in\Omega^3(M)$. Following Example \ref{ex:preq}, one shows that the Atiyah algebroid of $P$ is the $\mathbb{R}$-central extension of $TG \rightrightarrows TM$ defined by the closed 1-shifted 2-form $\omega+H$ where 
$\omega=d\theta \in \Omega^2(P)$. From the discussion in Example \ref{ex:gaupreq} we get that $(F,\tau):=((\text{id}_{TP},-\theta),0)$ is a 2-connection. 
\end{exa}

\begin{rema} 
A principal 2-bundle $\mathbf{P} \rightrightarrows \mathbf{Q}$ over $M$ with respect to the Lie 2-group action of $H\rtimes K \rightrightarrows K$ can be regarded as a {\em generalized morphism} from the 2-stack represented by $M$ to the 2-stack represented by $H\rtimes K \rightrightarrows K$ \cite{g-ger}. However, it is convenient to also consider the 2-stacks represented by double Lie groupoids as possible targets for such generalized morphisms. This generalization is given by considering Lie groupoids which are principal bundles for double Lie groupoid actions instead of just Lie 2-group actions; we get in this way a more general family of multiplicative bundles, see \cite[\S 3.3]{morla}  for the definition of such multiplicative (or morphic) actions. We hope to extend our description of (flat) 2-connections to that setting in future work.
\end{rema}

\section{Degree two quasi Q-groupoids: quasi CA-groupoids}\label{sec:2q-groupoids} 

\subsection{Roytenberg-Severa correspondence}\label{sec:RS}
In this subsection we summarize some well known facts about degree $2$ manifolds that we will use later. Some standard references are \cite{lacou, roycou, roy:on, sevlet}.

The most important examples of degree 2 manifolds are arguably the ones induced by Courant algebroids. Recall that a \emph{Courant algebroid} is a vector bundle $E \to M$ equipped with a nondegenerate symmetric bilinear form $\pair{\cdot}{\cdot}$, a bundle map $\rho: E \to TM$, and a bracket $\cbrack{\cdot}{\cdot}$ such that
\begin{enumerate}
\item $\cbrack{e_1}{fe_2} = \rho(e_1)(f) e_2 + f\cbrack{e_1}{e_2}$,
\item $\rho(e_1)(\pair{e_2}{e_3}) = \pair{\cbrack{e_1}{e_2}}{e_3} + \pair{e_2}{\cbrack{e_1}{e_3}}$,
\item $\cbrack{e_1}{\cbrack{e_2}{e_3}}=\cbrack{\cbrack{e_1}{e_2}}{e_3} + \cbrack{e_2}{\cbrack{e_1}{e_3}}$,
\item $\cbrack{e_1}{e_2} + \cbrack{e_2}{e_1} = \cD \pair{e_1}{e_2}$,
\end{enumerate}
for all $f \in C^\infty(M)$ and $e_i \in \Gamma(E)$, where $\cD : C^\infty(M) \to \Gamma(E)$ is defined by
\begin{equation*}
\pair{\cD f}{e} = \rho(e)(f).
\end{equation*}

With this definition we can state  \cite[Theorem 4.5]{roy:on} as follows.

\begin{prop}
    There is a one to one correspondence between:
    \begin{enumerate}
        \item Vector bundles $E \to M$ equipped with a nondegenerate symmetric bilinear form $\pair{\cdot}{\cdot}$ and degree $2$ symplectic manifolds.
        \item Courant algebroids and degree $2$ symplectic $Q$-manifolds. 
    \end{enumerate}

\end{prop}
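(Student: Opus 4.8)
The plan is to follow Roytenberg's argument in \cite{roy:on}, establishing first the purely symplectic correspondence in (1) and then promoting it to (2) by adding the homological vector field. Throughout, the key mechanism will be the derived bracket formalism together with the fact that on a positively graded symplectic manifold every symplectic vector field is automatically Hamiltonian.

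For (1), I would start from a degree $2$ symplectic manifold $(\cM,\omega)$ with body $M$. The sheaf of degree $1$ functions $C^1(\cM)$ is a locally free $C^\infty(M)$-module, so $C^1(\cM)=\Gamma(E)$ for a vector bundle $E\to M$. Since $\omega$ has degree $2$, the Poisson bracket restricts to a $C^\infty(M)$-bilinear map $C^1(\cM)\times C^1(\cM)\to C^0(\cM)=C^\infty(M)$ which is graded symmetric on the degree $1$ generators; nondegeneracy of $\omega$ forces this to be a fiberwise nondegenerate symmetric pairing $\pair{\cdot}{\cdot}$ on $E$. Conversely, given $(E,\pair{\cdot}{\cdot})$ I would choose a metric connection $\nabla$ and build the minimal symplectic realization with local coordinates $x^i$ (degree $0$), degree $1$ coordinates $\xi^a$ dual to a local frame of $E$, and $p_i$ (degree $2$), carrying $\omega=dx^i\wedge dp_i+\tfrac12 g_{ab}\,d\xi^a\wedge d\xi^b$ with $g_{ab}=\pair{e_a}{e_b}$; because $\nabla$ preserves $g$ these charts glue to a global degree $2$ symplectic manifold, and the two constructions are mutually inverse up to symplectomorphism.

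For (2), I would use that on a degree $2$ symplectic manifold the Poisson bracket identifies $\omega$-preserving vector fields of degree $1$ with Hamiltonians of degree $3$: a degree $1$ field $Q$ with $\cL_Q\omega=0$ is $Q=\{\Theta,\cdot\}$ for a unique $\Theta\in C^3(\cM)$, as there are no nonzero degree $3$ Casimirs. Then $Q^2=\tfrac12\{\{\Theta,\Theta\},\cdot\}$, and since the only degree $4$ function with vanishing Hamiltonian vector field is $0$, one gets $Q^2=0\iff\{\Theta,\Theta\}=0$. Thus degree $2$ symplectic $Q$-manifolds correspond to pairs $((E,\pair{\cdot}{\cdot}),\Theta)$ with $\Theta$ cubic and $\{\Theta,\Theta\}=0$. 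Finally I would read off the Courant structure on $E$ by derived brackets, setting $\cbrack{e_1}{e_2}=\{\{\Theta,e_1\},e_2\}$ and $\rho(e)(f)=\{\{\Theta,e\},f\}$ for $e_i\in\Gamma(E)=C^1(\cM)$ and $f\in C^\infty(M)$, with metric $\pair{\cdot}{\cdot}$ as above; a graded Jacobi computation then matches axioms (1)--(4) with the master equation, the Leibniz-type identity (3) being the direct translation of $\{\Theta,\Theta\}=0$ and (1), (2), (4) following from the derived-bracket identities and the defining relation $\pair{\cD f}{e}=\rho(e)(f)$.

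The main obstacle I anticipate is twofold. The first is the \emph{global} half of (1): showing that every degree $2$ symplectic manifold is of the stated form requires gluing the local normal form, which is precisely where the metric connection enters and must be shown to be independent of choices up to symplectomorphism. The second is the bookkeeping in (2), verifying the full list of Courant axioms against the single equation $\{\Theta,\Theta\}=0$; this is routine via the graded Jacobi identity but lengthy, and is carried out in detail in \cite[Theorem 4.5]{roy:on}, which I am content to invoke.
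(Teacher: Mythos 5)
Your proposal is correct and follows exactly the route the paper takes: the paper offers no proof of its own but simply restates \cite[Theorem 4.5]{roy:on}, and your sketch is precisely Roytenberg's argument (minimal symplectic realization for the pseudo-Euclidean correspondence, then Hamiltonian lifting of $Q$ to a cubic $\Theta$ with $\{\Theta,\Theta\}=0$ and derived brackets for the Courant structure). Nothing further is needed.
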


The above result was extended to the multiplicative case in \cite{lacou, raj:qgr,  muldir} through the following definitions.

A VB-groupoid $(H\rightrightarrows E; G\rightrightarrows M)$ has a \emph{multiplicative pairing} if the vector bundle $H\to G$ is equipped with a nondegenerate symmetric bilinear form $\pair{\cdot}{\cdot}$ and $\gr(\gm_H)\to\gr(\gm_G)$ is a lagrangian subbundle of $\overline{H}\times H\times H\to G\times G\times G$ where the $\Bar{\cdot}$ denotes minus the pairing on that factor and the inclusion $\gr(\gm_G) \hookrightarrow G \times G \times G$ is given by $(g,h)\mapsto (\gm_G(g,h),g,h)$ for all $g,h \in G$ composable.

\begin{defi}
    A VB-groupoid $(H\rightrightarrows E; G\rightrightarrows M)$ is a \emph{CA-groupoid} if  $(H\to G,\pair{\cdot}{\cdot}, \cbrack{\cdot}{\cdot},\rho)$ is a Courant algebroid such that $\gr(\gm_H)\to\gr(\gm_G)$ is a Dirac structure of $\overline{H}\times H\times H\to G\times G\times G,$ see \cite[Def. $2.1$]{liedir}.
\end{defi}

\begin{prop}\label{prop:CAg}
    There is a one-to-one canonical correspondence between:
    \begin{enumerate}
        \item VB-groupoids endowed with a multiplicative pairing and degree $2$ symplectic groupoids. 
        \item CA-groupoids and degree $2$ symplectic $Q$-groupoids. 
    \end{enumerate}
\end{prop}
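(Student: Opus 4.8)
The plan is to bootstrap the non-multiplicative correspondence recalled just above (the Roytenberg--\Severa\ correspondence, \cite{roy:on}) from objects to groupoids, exploiting the fact that it is functorial with respect to \emph{Lagrangian relations}. I would write $\mathcal{R}$ for the assignment sending a quadratic vector bundle $(H\to G,\pair{\cdot}{\cdot})$ to its minimal degree $2$ symplectic realization $\mathcal{R}(H)$, a degree $2$ symplectic manifold with body $G$; objectwise this is a bijection, with inverse recovering $G$ and the degree $\le 2$ functions. As explained in \cite{roy:on,lacou}, it takes a Lagrangian subbundle $L\hookrightarrow\overline{H_1}\times H_2$ of a product of quadratic bundles to a Lagrangian submanifold $\mathcal{R}(L)\hookrightarrow\overline{\mathcal{R}(H_1)}\times\mathcal{R}(H_2)$, compatibly with composition of relations; here $\overline{(\cdot)}$ denotes reversing the sign of both the pairing and the symplectic form.

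For part (1), I would first apply $\mathcal{R}$ to the total quadratic bundle $H\to G$ to obtain $\cH:=\mathcal{R}(H)$ with body $G$. A multiplicative pairing is, by definition, the datum that $\gr(\gm_H)\to\gr(\gm_G)$ is a Lagrangian subbundle of $\overline{H}\times H\times H$, so $\mathcal{R}$ sends it to a Lagrangian submanifold $\gr(\gm_{\cH})\hookrightarrow\overline{\cH}\times\cH\times\cH$, which I would check is the graph of a graded multiplication $\gm_{\cH}$. The maps $\gs_H,\gt_H,\gu_H,\gi_H$ are encoded by this same Lagrangian subbundle, so the groupoid axioms for $H\rightrightarrows E$ transport, through the compatibility of $\mathcal{R}$ with composition of relations, into the groupoid axioms for $\cH\rightrightarrows\cM$; the base $\cM$ is the degree $2$ manifold obtained as the unit submanifold (recall the pairing identifies the core $C$ with $E^*$). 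That $\gr(\gm_{\cH})$ is Lagrangian is exactly multiplicativity of the symplectic form, so $\cH\rightrightarrows\cM$ is a degree $2$ symplectic groupoid; since $\mathcal{R}$ and the minimal realization are mutually inverse and preserve the relational data, this yields a canonical bijection.

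For part (2), I would add the homological vector field. By the objectwise correspondence, a Courant algebroid structure on $H\to G$ is the same as a cubic Hamiltonian $\Theta$ on $\cH$ with $\{\Theta,\Theta\}=0$, that is, a degree $2$ symplectic $Q$-manifold structure $Q=\{\Theta,\cdot\}$, the bracket and anchor being recovered by derived brackets. The CA-groupoid condition strengthens part (1) by requiring $\gr(\gm_H)$ to be a \emph{Dirac structure}, i.e.\ an \emph{involutive} Lagrangian subbundle. The key step is to show that, under $\mathcal{R}$, involutivity of $\gr(\gm_H)$ corresponds precisely to multiplicativity of $Q$: the submanifold $\gr(\gm_{\cH})$ is then invariant under the product $Q$-structure on $\overline{\cH}\times\cH\times\cH$, which by the graph characterization of multiplicative vector fields (see the discussion following Definition \ref{def:qqgpd}) says exactly that $\Theta$ is multiplicative, so $Q\in\fX^{1,1}_{mul}(\cH)$. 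Hence a CA-groupoid is the same as a degree $2$ symplectic $Q$-groupoid, and the bijection of part (1) restricts to the desired one.

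The main obstacle is precisely this translation in part (2) between the \emph{integrability} of the Dirac structure $\gr(\gm_H)$ on the classical side and the \emph{multiplicativity} of $Q$ on the graded side: one must match involutivity of the graph under the Courant bracket with invariance of $\gr(\gm_{\cH})$ under the product homological vector field. I would handle this by combining Roytenberg's derived-bracket formula for the Courant bracket with the simplicial/graph description of multiplicative vector fields; the remaining verifications (the groupoid axioms, that the transported Lagrangian is a genuine graph, and the identification of the base $\cM$) should be formal consequences of the functoriality of $\mathcal{R}$ under Lagrangian relations.
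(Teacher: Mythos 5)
The paper does not actually prove this proposition: it is stated as imported from the references \cite{lacou, raj:qgr, muldir}, so there is no in-paper argument to compare yours against. That said, your proposal is essentially the argument of those references (in particular Li-Bland's): one promotes the objectwise Roytenberg--\Severa{} correspondence to groupoids via its functoriality under Lagrangian relations, so that the Lagrangian subbundle $\gr(\gm_H)\subset \overline{H}\times H\times H$ becomes a Lagrangian submanifold $\gr(\gm_{\cH})$ (part (1)), and the Dirac/involutivity condition becomes the statement that $\gr(\gm_{\cH})$ is a Lagrangian $Q$-submanifold of the product (part (2)). You have correctly isolated the one genuinely nontrivial translation, namely matching involutivity of the graph with invariance under the product homological vector field; this is exactly the ``Dirac structures with support correspond to Lagrangian $Q$-submanifolds'' dictionary.

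Two small points deserve explicit care. First, the paper's Definition \ref{def:deg2sym} asks for multiplicativity of the \emph{Hamiltonian} $\Theta$, not just of $Q=\{\Theta,\cdot\}$; you should close this gap by noting that on the degree $2$ symplectic groupoid the defect $\gm_{\cH}^*\Theta-\pr_1^*\Theta-\pr_2^*\Theta$ is a degree $3$ function that is Casimir once $Q$ is multiplicative, hence vanishes (there are no nonzero additive constants in positive degree), so $Q$-invariance of the graph does give $\Theta\in C^3_{mul}$. Second, in part (1) the verification that the transported Lagrangian is the graph of an actual graded groupoid multiplication (rather than merely a Lagrangian relation), and that the source/target are submersions as required by the paper's definition of graded Lie groupoid, uses the VB-groupoid structure maps and the induced identification $C\cong E^*$; you flag this but it is where the rank count $\rk C=\rk E$ forced by the Lagrangian condition enters. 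With those two items spelled out, the argument is complete.
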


\subsection{Degree 2 symplectic quasi Q-groupoids}

Let $((H\rightrightarrows E; G\rightrightarrows M), \pair{\cdot}{\cdot})$ be a VB-groupoid endowed with a multiplicative pairing and denote by $(\cG\rightrightarrows\cM, \{\cdot,\cdot\})$ the corresponding degree $2$ symplectic groupoid. Motivated by Proposition \ref{prop:CAg} we can introduce the following definition.

\begin{defi}\label{def:qCA}
    We say that $((H\rightrightarrows E; G\rightrightarrows M), \langle\cdot,\cdot\rangle)$ carries a \emph{quasi CA-groupoid} structure if $(\cG\rightrightarrows\cM, \{\cdot,\cdot\})$ is a degree 2 symplectic quasi $Q$-groupoid for some $\Theta\in C^3_{mul}(\cG)$ and $\theta\in C^4(\cM)$.
\end{defi}

As in the case of quasi LA-groupoids we will not give a classical description of them. Instead we will give several examples that motivate our definition.

Based on \cite{liuweixu}, it is natural to ask how to characterize the doubles of the categorifed Lie bialgebroids described in $\S\ref{subsec:catliebia}$. Using Definition \ref{def:qCA} together with Proposition \ref{prop:cot} we get a satisfactory answer that we summarize in the next example.

\begin{exa}[Doubles of categorified Lie bialgebroids]\label{ex:doubles}
   Let $(H\rightrightarrows E; G\rightrightarrows M)$ be a VB-groupoid with core $C$ and denote by $H[1]\rightrightarrows E[1]$ the associated $1$-graded Lie groupoid. As explained in $\S\ref{subsec:catliebia},$ a categorified Lie bialgebroid structure (in its most general form) corresponds to a Maurer-Cartan element
   $$\Psi=\big((f,F),(q,Q),t,(\Pi,\pi),(\Phi,\phi)\big)\in \cV^1_{H[1]}.$$
   Then Proposition \ref{prop:cot} implies that $(T^*[2]H[1]\rightrightarrows\cA_{H[1]}^*[2],\{\cdot,\cdot\}_{can})$ with 
   $$\Theta=F+Q+\Pi+\Phi\in C^3_{mul}(T^*[2]H[1])\quad\text{and}\quad \theta=f+q+t+\pi+\phi\in C^4(\cA^*_{H[1]}[2])$$
   is a degree $2$ symplectic quasi $Q$-groupoid. It is well known, see e.g. \cite{muldir}, that the degree $2$ symplectic groupoid  $(T^*[2]H[1]\rightrightarrows\cA_{H[1]}^*[2],\{\cdot,\cdot\}_{can})$ corresponds to the VB-groupoid $(H\oplus H^*\rightrightarrows E\oplus C^*; G\rightrightarrows M)$ and the canonical pairing is multiplicative. Combining this fact with our results we get that this VB-groupoid carries a quasi CA-groupoid structure. When $\theta=0$ we get a genuine CA-groupoid,  for an extensive study of this case and also of its infinitesimal counterpart see \cite{kar:ca}. 
\end{exa}

An interesting consequence of the above example is that we can give a more conceptual proof of Theorem \ref{thm:main}; in fact, this is a generalization of that result.

\begin{prop}\label{pro:symdua}
    Let $(H\rightrightarrows E; G\rightrightarrows M)$ be a VB-groupoid and denote by $H[1]\rightrightarrows E[1]$ the associated $1$-graded Lie groupoid. Then 
    $$(T^*[2]H[1]\rightrightarrows\cA_{H[1]}^*[2],\{\cdot,\cdot\}_{can})\quad\text{and}\quad (T^*[2]H^*[1]\rightrightarrows\cA_{H^*[1]}^*[2],\{\cdot,\cdot\}_{can})$$
    are symplectomorphic groupoids. Moreover, the isomorphism sends
    $$\Psi=\big((f,F),(q,Q),t,(\Pi,\pi),(\Phi,\phi)\big)\rightsquigarrow\Psi'=\big((\phi,\Phi),(\pi, \Pi),t,(Q, q),(F,f)\big).$$
\end{prop}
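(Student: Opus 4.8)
The plan is to realize the desired symplectomorphism as the graded-groupoid incarnation of the classical ``Legendre'' isomorphism $T^*[2]V[1]\cong T^*[2]V^*[1]$, and then to read off the reshuffling of $\Psi$ by tracking homogeneous components. First I would recall that for any vector bundle $V\to N$ the degree $2$ symplectic manifold $T^*[2]V[1]$ corresponds, under the Roytenberg correspondence \cite[Theorem 4.5]{roy:on}, to the vector bundle with canonical pairing $(V\oplus V^*,\langle\cdot,\cdot\rangle_{can})$. Since $(V\oplus V^*,\langle\cdot,\cdot\rangle_{can})$ and $(V^*\oplus V^{**},\langle\cdot,\cdot\rangle_{can})=(V^*\oplus V,\langle\cdot,\cdot\rangle_{can})$ coincide as vector bundles with pairing after reordering the summands, the correspondence produces a canonical symplectomorphism $T^*[2]V[1]\cong T^*[2]V^*[1]$. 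In coordinates this is the standard position--momentum swap: writing $\xi^a$ for the degree $1$ fibre coordinates of $V[1]$ and $\theta_a$ for their conjugate momenta (so that the $\theta_a$ are exactly the degree $1$ fibre coordinates of $V^*[1]$ inside $T^*[2]V^*[1]$), the map is the identity on the base and interchanges $\xi^a$ with the momentum conjugate to $\theta_a$, up to the sign needed to preserve $\omega_{can}$.

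Next I would globalize this to the groupoid setting. By Proposition \ref{prop:CAg} together with Example \ref{ex:doubles}, the degree $2$ symplectic groupoid $T^*[2]H[1]\rightrightarrows\cA^*_{H[1]}[2]$ corresponds to the VB-groupoid with multiplicative pairing $(H\oplus H^*\rightrightarrows E\oplus C^*;G\rightrightarrows M)$. Running the same construction on the dual VB-groupoid $(H^*\rightrightarrows C^*;G\rightrightarrows M)$, whose core is $E^*$, the groupoid $T^*[2]H^*[1]\rightrightarrows\cA^*_{H^*[1]}[2]$ corresponds to $(H^*\oplus H\rightrightarrows C^*\oplus E;G\rightrightarrows M)$ with its canonical pairing. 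These two VB-groupoids with pairing are identical after swapping summands, so Proposition \ref{prop:CAg} yields a canonical symplectomorphism of graded groupoids $T^*[2]H[1]\cong T^*[2]H^*[1]$ covering $\cA^*_{H[1]}[2]\cong\cA^*_{H^*[1]}[2]$; multiplicativity is automatic because the correspondence of Proposition \ref{prop:CAg} is an equivalence of groupoid objects.

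Finally I would compute the effect on the functions $\Theta$ and $\theta$. The swap $\xi\leftrightarrow\theta$ reverses the multivector degree with respect to $H$: a homogeneous component of $\Theta\in C^3_{mul}(T^*[2]H[1])$ of $H$-multivector degree $j$ (namely $F,Q,\Pi,\Phi$ for $j=0,1,2,3$) is sent to a component of $H^*$-multivector degree $3-j$, and likewise each $\wedge^k\cA_{H[1]}$-component of $\theta$ (namely $f,q,t,\pi,\phi$ for $k=0,1,2,3,4$) is sent to a $\wedge^{4-k}$-component. Matching this reversal against the decompositions $\Theta=F+Q+\Pi+\Phi$ and $\theta=f+q+t+\pi+\phi$ of Example \ref{ex:doubles} gives exactly $\Psi\rightsquigarrow\Psi'=\big((\phi,\Phi),(\pi,\Pi),t,(Q,q),(F,f)\big)$, with the central term $t\in\Gamma^0(\wedge^2\cA_{H[1]})$ fixed since $k=2$ is self-dual under $k\mapsto 4-k$. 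The main obstacle will be verifying that it is the degree $2$ symplectic \emph{groupoid} structures, and not merely the underlying symplectic manifolds, that are exchanged, i.e.\ that the Legendre isomorphism intertwines source, target and multiplication and matches the signs in $\omega_{can}$; this is precisely what the canonical identification of VB-groupoids with pairing above guarantees. Specializing $\Psi$ to a quasi LA-groupoid $\big((0,0),(q,Q),0,(0,0),(0,0)\big)$, whose image is the quasi PVB-groupoid $\big((0,0),(0,0),0,(Q,q),(0,0)\big)$, recovers and generalizes Theorem \ref{thm:main}.
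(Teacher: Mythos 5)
Your proof is correct, but it takes a genuinely different route from the paper's. The paper disposes of this proposition in one line, as a straightforward graded version of the classical reversal isomorphism $T^*\Omega\cong T^*\Omega^*$ for a VB-groupoid $\Omega$ and its dual \cite[Thm.\ 11.5.14]{mac:book}: that isomorphism is constructed directly from the tangent/cotangent pairing, and one simply checks that it survives the degree shifts. You instead obtain the symplectomorphism indirectly, by pushing the tautological isometry of pseudo-Euclidean VB-groupoids $H\oplus H^*\cong H^*\oplus H$ through the correspondence of Proposition \ref{prop:CAg} and Example \ref{ex:doubles}. This is more abstract but arguably more in the spirit of \S\ref{sec:RS}, and it makes the ``moreover'' part transparent via the degree count $j\mapsto 3-j$ on momentum degree (your bookkeeping here is right: a monomial $x^a\xi^b\theta^c p^d$ of total degree $3$ has old momentum degree $c+d$ and new momentum degree $b+d=3-(c+d)$, and similarly $k\mapsto 4-k$ for the components of $\theta$, fixing $t$). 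The one point you should make explicit is that Proposition \ref{prop:CAg} is stated as a bijection on objects, whereas your argument needs it as an equivalence of categories (isometries of VB-groupoids with multiplicative pairing inducing symplectomorphisms of the associated degree $2$ symplectic groupoids); this functoriality is indeed part of Roytenberg's correspondence and of its multiplicative upgrade, but it is the load-bearing step of your transfer argument and deserves a sentence. What your approach buys is a conceptual explanation of \emph{why} the two cotangents agree (both realize the same split pseudo-Euclidean VB-groupoid) and an essentially free proof of the component reshuffling; what the paper's citation buys is an explicit formula for the symplectomorphism, with signs, which your argument leaves implicit.
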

\begin{proof}
    This is a straightforward graded version of \cite[Thm. 11.5.14]{mac:book}.
\end{proof}

In $\S\ref{sec:exa}$ we already saw some examples of categorified Lie bialgebroids. Therefore Example \ref{ex:doubles} produces the doubles for these objects. Another relevant example is the following.

\begin{exa}
    Let $G\rightrightarrows M$ be a groupoid and consider the Q-groupoid $(T[1]G\rightrightarrows T[1]M, Q=d_{dr})$. A categorified Lie bialgebroid given by the Maurer-Cartan element $$\Psi=\big((f,F),(0,Q),0,(0,0),(0,0)\big)\in \cV^1_{T[1]G}$$is the same as a closed $1$-shifted 3-form on the groupoid, i.e. $F\in\Omega^3(G)$ and $f\in\Omega^4(M)$ such that
    $$\gm^*F=\pr_1^*F+\pr_2^*F,\quad dF=\gs^*f-\gt^*f\quad\text{and}\quad df=0.$$ 
    The corresponding double is a quasi CA-groupoid on
    $$TG\oplus T^*G\rightrightarrows TG\oplus A^*$$
    with the canonical pairing and anchor, Courant-Dorfman bracket twisted by the $3$-form $F$ and twisting $4$-form $\gs^*f-\gt^*f$, as in \cite{stro:4form}.
\end{exa}

Following the work of one of us, one can upgrade the previous example to show that  all the exact twisted CA-groupoids give examples of quasi CA-groupoids. 

\begin{exa}[Exact twisted CA-groupoids]\label{ex:etca}
Let $G\rightrightarrows M$ be a Lie groupoid and pick a closed 2-shifted 2-form on it, i.e. an $\omega\in \Omega^2(G^{(2)}),$ a  $F\in \Omega^3(G)$ and a $f\in \Omega^4(M)$ satisfying
\[\quad \delta \omega=0,\quad d\omega=-\delta F, \quad dF=\delta f,\quad df=0. \]
    Then we can use $\omega$ to twist the canonical VB-groupoid structure on $TG\oplus T^*G \rightrightarrows TM \oplus A^*$ by defining the new multiplication $\gm_\omega$ as follows: 
    \begin{align*} \text{Graph}(\mathtt{m}_{\omega }):=\left\{ (X\oplus \alpha ,Y\oplus \beta ,Z\oplus \gamma) \in \overline{\mathbb{T}G}\times\mathbb{T}G\times \mathbb{T}G\left| \begin{aligned} 
&\mathtt{m}^*_{G} \alpha =\text{pr}_1^*\beta + \text{pr}_2^*\gamma +i_{(Y,Z)}\omega \\
& X=T \mathtt{m}(Y,Z), \end{aligned}   \right.\right\}.
    \end{align*} 
    Now the Courant-Dorfman on $\mathbb{T}G$ can be twisted by $F$ to produce a twisted CA-groupoid structure with the $4$-form twist given by $f$. It was shown in \cite{tracou} that all the exact twisted CA-groupoids are of this form. The function $\Theta=F+d_{dr}\in C^3(T^*[2]T[1]G)$ is multiplicative with respect to the multiplication $\gm_\omega$ and, since $\theta=f\in C^4(T[1]M)\subset C^4(T[1]A^*[1])$, the following equations hold:
    $$\{\Theta,\Theta\}_{can}=\gs^* \theta-\gt^*\theta=\gs^*_\omega \theta-\gt^*_\omega\theta\quad\text{and}\quad \{\Theta,\gt_\omega^*\theta\}_{can}=\{\Theta,\gt^*\theta\}_{can}=0$$
    because ${\gs_\omega^*}|_{C^4(T[1]M)}=\gs^*|_{C^4(T[1]M)}$ and the same for the target. Therefore all these exact twisted CA-groupoids produce examples of quasi CA-groupoids. 
\end{exa}

\begin{exa}[Central extensions]
    Let $(\cG\rightrightarrows\cM,\{\cdot,\cdot\}, \Theta,\theta)$ be a degree $2$ symplectic quasi $Q$-groupoid. Then one can construct an $\bR[2]$ central extension, $\cG\times \bR[2]\rightrightarrows\cM$, that is a degree $2$ quasi $Q$-groupoid with 
    $$\widehat{Q}(f)=Q(f),\quad  \widehat{Q}(\lambda)=\Theta,\quad  \widehat{q}^r(f)=q^r(f)\quad\text{and}\quad \widehat{q}^r(\lambda)=\gt^*\theta,$$
    where $f\in C^\bullet(\cG)$ and $\lambda\in C^2(\bR[2])$ is the generator. The interested reader can verify that this is indeed a quasi $Q$-groupoid by following the same approach outlined in the proof of Proposition \ref{prop:cenEx}.
\end{exa}

\section{Epilogue: Lie and Courant algebroids through the stacky glasses}

In the course of this work we have described several structures on (graded) Lie groupoids and we have shown that they can be transported along (and compared using) Morita equivalences. This allows us to re-interpret our previous results in terms of differentiable stacks.   

Recall that a \emph{differentiable stack} is a stack $\mathfrak{m}$, i.e. a category fibered in groupoids satisfying sheaf-like properties (see \cite[Def. 2.10]{difger}), that admits a presentation, i.e. a manifold $M$ equipped with a surjective representable submersion $m:M\to\mathfrak{m}$. 

It is well known that there exists an equivalence of 2-categories between the 2-category of differentiable stacks and the 2-category of Lie groupoids up to Morita equivalence, see e.g. \cite{dorpro,difger} for the precise mathematical formulation. Roughly speaking the correspondence works as follows. For a given differentiable stack $\mathfrak{m}$ one can take a presentation $m:M\to\mathfrak{m}$ and then show that $M\times_{\mathfrak{m}}M\rightrightarrows M$ is a Lie groupoid. Moreover, different choices of presentation give Morita equivalent groupoids. Conversely, given a Lie groupoid $G\rightrightarrows M$ one shows that the category of $G$-Principal bundles $\mathfrak{m}$ is a differentiable stack with a presentation given by the quotient morphism $m:M\to\mathfrak{m}$ corresponding to the universal trivial $G$-principal bundle. 

Let $G\rightrightarrows M$ be a Lie groupoid with corresponding  differentiable stack $\mathfrak{m}$. The equivalence described above allows us to define structures on $\mathfrak{m}$ using a Lie groupoid representative $G\rightrightarrows M$. For example, it is shown in \cite{hep:vec, ortwal} that \emph{vector fields on $\mathfrak{m}$} are modeled by the dgla $$L_G^\bullet=(\Gamma A\to \fX_{mul}(G), d, [\cdot,\cdot]).$$
Recently, that work was extended to \emph{$1$-shifted multivector fields on $\mathfrak{m}$} by introducing the dgla $(V^\bullet_G, d, [\cdot,\cdot])$ given in \eqref{eq:dglaV}, see \cite{bone:1shp}. 

Recall that a \emph{homological vector field} on a degree $n$ manifold $\cM$ is a Maurer-Cartan element on $(\fX^{1,\bullet}(\cM),[\cdot,\cdot]).$ In $\S\ref{sec:qqgpd}$ we introduced \emph{$n$-graded differentiable stacks} $\mathfrak{M}$, i.e. differentiable stacks in the category of degree $n$ manifolds, as Morita equivalence classes of $n$-graded Lie groupoids $\cG\rightrightarrows\cM$. The results mentioned in the previous paragraph show that vector fields on $\mathfrak{M}$ are modeled by the dgla $(L_\cG^\bullet, d, [\cdot,\cdot])$ (see Proposition \ref{prop:dgla}) and thus we can define a \emph{homological vector field on $\mathfrak{M}$} as the class of a Maurer-Cartan element in $L_\cG^\bullet$. These Maurer-Cartan elements are, by Proposition \ref{prop:MC}, the same as quasi $Q$-structures (Definition \ref{def:qqgpd}); on the other hand, by Proposition \ref{prop:actR1} these are the same as actions of $\bR[-1]$ on $\mathfrak{M}$. Finally, when it comes to Poisson structures, we argue in $\S\ref{sec:pq}$ that, by replacing $L^\bullet_\cG$ with $\cV^\bullet_\cG$, one gets the notion of a $PQ$-structure on $\mathfrak{M}$.

Using the fact that the category of $1$-manifolds is equivalent to the category of vector bundles, in $\S\ref{sec:1q-groupoids}$ we study differentiable stacks in the category of vector bundles. It was explained in \cite{mat:vb} (see also \cite{ raj:VB}) that the VB-Morita equivalence class of a VB-groupoid can be used to define a \emph{ $2$-vector bundle\footnote{Compare with \cite{wal} where descent is used to define vector bundles over differentiable stacks.} over a differentiable stack}. 
The two key definitions of $\S\ref{sec:1q-groupoids}$ are:
\begin{itemize}
    \item A \emph{$1$-shifted Lie algebroid structure on a 2-vector bundle over a differentiable stack} is the Morita class of a quasi LA-groupoid as introduced in Definition \ref{def:qla}. 
    \item A \emph{linear $1$-shifted Poisson structure on a 2-vector bundle over a differentiable stack} is the Morita class of a quasi PVB-groupoid as introduced in Definition \ref{def:qPVB}. 
\end{itemize}

Theorem \ref{thm:main} shows that the $1$-dual of a $1$-shifted Lie algebroid on a 2-vector bundle over a differentiable stack is a linear $1$-shifted Poisson structure. In the light of the above result,  $\S\ref{subsec:catliebia}$ treats the concept of \emph{$1$-shifted Lie bialgebroid}. 

Theorem \ref{l2algqla} proves that a $1$-shifted Lie algebroid on a 2-vector bundle over a manifold is a $L_2$-algebroid up to $L_\infty$-quasi isomorphism. In the terminology of \cite{cc:ngro}, one can call these objects \emph{stacky Lie algebroids}, we expect that they are the infinitesimal counterpart of the stacky groupoids introduced in \cite{cc:ngro}.

With this language, the examples at the end of $\S\ref{sec:1q-groupoids}$ acquire the following interpretation:
\begin{enumerate}
    \item In $\S\ref{subsec:cotqpoi}$ we show that the ($1$-shifted) cotangent of a $1$-shifted Poisson structure on a differentiable stack admits a canonical $1$-shifted Lie bialgebroid structure.
    \item In $\S\ref{subsec:cenext}$ we exhibit the prequantum $1$-shifted Lie algebroids of $1$-shifted symplectic and Poisson structures on differentiable stacks.
    \item In $\S\ref{sec:act}$ we construct the $1$-shifted action Lie algebroid associated to the infinitesimal action of a stacky Lie algebra on a differentiable stack.
    \item In $\S\ref{subsec:ruth}$ we prove that representations up to homotopy of a Lie algebroid $A$ on a  2-term complex ${\bf V}$ are in 1-1 correspondence with $1$-shifted Lie algebroid morphisms from $A$ to $\mathfrak{gl}({\bf V})$.
    \item In $\S\ref{sec:ati}$ we demonstrate that flat $2$-connections on a principal 2-bundle over a manifold $\mathfrak{p}\to M$ are the same as  $1$-shifted Lie algebroid morphisms from $TM$ to the $1$-shifted Atiyah algebroid $\mathbb{A}_\mathfrak{p}.$
\end{enumerate}

Finally, $\S\ref{sec:2q-groupoids}$ is devoted to the study of  \emph{Courant algebroids over differentiable stacks}. In particular, examples of such objects are given by doubles of $1$-shifted Lie bialgebroids as in Example \ref{ex:doubles} and by Exact twisted CA-groupoids as in Example \ref{ex:etca}.

 Observe that in our Definition \ref{def:qCA} of a quasi CA-groupoid we have a genuine symplectic structure. This means that quasi CA-groupoid structures cannot be transported along Morita equivalences. The way of restoring Morita invariance into the picture, and thus defining a Courant algebroid over a differentiable stack, is by considering a shifted symplectic structure in the following way which is completely analogous to the situation of quasi-symplectic groupoids \cite{BCWZ, xu:mom}. A \emph{shifted Courant algebroid over a differentiable stack} is the Morita class of a degree $2$ quasi Q-groupoid $(\cG\rightrightarrows\cM, Q, q)$ together with a $1$-shifted $2$-form of degree $2$, i.e. a pair of differential forms $\omega_1\in\Omega^{2,2}(\cG)$ and $\omega_0\in\Omega^{3,2}(\cM)$, satisfying the following properties:
    \begin{enumerate}
        \item closedness: $\gm^*\omega_1=\pr_1^*\omega_1+\pr_2^*\omega_1,\quad d\omega_1=\gs^*\omega_0-\gt^*\omega_0$ and $d\omega_0=0;$
        \item non-degeneracy: the form $\omega_1$ is homologically non-degenerate by inducing a quasi-isomorphism between the tangent $\cA\xrightarrow{\rho} T\cM$ and cotangent $T^*[2]\cM\xrightarrow{\rho^*} \cA^*[2]$ complex of the graded Lie groupoid.
        \item $(q,Q)$ invariance: $\cL_Q\omega_1=0$, $\cL_Q\gt^*\omega_0=0$ and $\cL_{q^r}\omega_1=0$.
    \end{enumerate}
    Note that, since $\omega_1$ is not closed, we cannot conclude that $Q$ is a hamiltonian vector field for some degree $3$ function on $\cG$. Another observation is that, in this situation, $\cA$  can be viewed as a ``Dirac structure" in $T\cM\oplus T^*[2]\cM$ with bracket twisted by the $3$-form $\omega_0$, see \cite{vys:ggg} for the graded version of Courant algebroids.

\appendix 
\section{Double quasi Poisson groupoids}\label{sec:douqpoi}

Following \cite{browmac,macdou} we say that a \emph{double topological groupoid} $(D; H ,G; M)$ is a groupoid object in the category of topological groupoids and is represented by a diagram of the form
\[ \xymatrix{ D\ar@<-.5ex>[r] \ar@<.5ex>[r]\ar@<-.5ex>[d] \ar@<.5ex>[d]& H \ar@<-.5ex>[d] \ar@<.5ex>[d] \\  G \ar@<-.5ex>[r] \ar@<.5ex>[r] & M. }\]  
A {\em double Lie groupoid} is a double topological groupoid as in the previous diagram such that each of the side groupoids is a Lie groupoid and the double source map $(\mathtt{s}^H,\mathtt{s}^G):  D \rightarrow H  \times_M {G} $ is a submersion.

\begin{exa} 
Let $G \rightrightarrows M$ be a Lie groupoid. Then the pair groupoid $G \times G$ is a double Lie groupoid with sides $M \times M$ and $G$ over $M$. 
\end{exa} 

\begin{exa} \label{ex:2g}
A \emph{crossed module} of Lie groups $(G,H, \partial,\alpha)$, see e.g. \cite[$\S$ 8.4]{grp}, gives rise to a double Lie groupoid with $D=H\times G $ with sides $G$ and a point. Here $D $ and $G$ are also Lie groups, thus crossed modules can be equivalently described as a group object in the category of Lie groupoids. We refer to  them as \emph{strict Lie $2$-groups}. 

It is straightforward to see that the Lie functor establishes an equivalence of categories between the categories of 1-connected strict Lie 2-groups and strict Lie 2-algebras. As a (well-known) consequence of Theorem \ref{l2algqla}, we get that strict Lie 2-algebras are equivalent to {\em crossed modules of Lie algebras}.
\end{exa} 

If $(D; H ,G; M)$ is a double Lie groupoid, we denote by $A^{h}_D$ the Lie algebroid of $D \rightrightarrows H$ and by $A^v_D $ the Lie algebroid of $D \rightrightarrows G$. By functoriality, $(A^h_D \rightrightarrows  A_{G }; H\rightrightarrows M)$ and $(A^v_D \rightrightarrows  A_{H }; G\rightrightarrows M)$ are LA-groupoids, see \cite{macdou} for details. For a given $X\in \Gamma (\wedge^\bullet A^h_D)$, its horizontal left and right invariant extensions are denoted by $X^{\mathcal{L} }$ and by $X^{\mathcal{R}}$; while for $Y\in \Gamma (\wedge^\bullet A^v_D)$, its vertical left and right invariant extensions are denote by $Y^L$ and by $Y^R$ .

\begin{defi} 
A {\em double quasi Poisson groupoid} $((D; H ,G; M), \Pi,\Psi)$ is a double Lie groupoid $(D; H ,G; M) $ which is equipped with a bivector field $\Pi\in \mathfrak{X}^2(D )$ and $\Psi\in \Gamma (\wedge^3 A^h_D)$ satisfying:
\begin{itemize} 
    \item $\Pi$ is multiplicative over both $H $ and $G $,
    \item $[\Pi,\Pi]=\Psi^{\mathcal{L} }-\Psi^{\mathcal{R} }$
    \item $\Psi$ is multiplicative with respect to the VB-groupoid $(\bigoplus^3 A^h_D  \rightrightarrows \bigoplus^3 A_{G };H \rightrightarrows M)$: in other words, the contraction map
\[ \overline{\Psi}: \left(\bigoplus^3 (A^h_D)^* \rightrightarrows \bigoplus^3 C^*;H \rightrightarrows M\right) \rightarrow \mathbb{R} \] is a groupoid morphism, where $C$ is the core of the VB-groupoid $(A^h_D \rightrightarrows  A_{G }; H\rightrightarrows M)$. 
\end{itemize}
\end{defi}

\begin{rema} Notice that the fact that $\Psi$ is multiplicative implies immediately that $\Psi^{\mathcal{L} }-\Psi^{\mathcal{R} }$ is also vertically multiplicative. \end{rema} 

\begin{exa} A double quasi Poisson groupoid $((D; H ,G; M), \Pi,\Psi)$ in which $D \rightrightarrows {G} $ and $H \rightrightarrows M$ are vector bundles and $\Pi$ and $\Psi$ are linear  is a quasi PVB-groupoid as in Definition \ref{def:qPVB}. 
\end{exa}  

\begin{exa} 
A double quasi Poisson groupoid $((D; H ,G; M) , \Pi,0)$ in which $\Pi$ is nondegenerate is a {\em double symplectic groupoid} \cite{luwei2}.  
\end{exa}

\begin{exa} Strict Lie 2-groups as in Example \ref{ex:2g}  can be equipped with double quasi Poisson structures in two different ways. 
\begin{enumerate}
    \item A double quasi-Poisson groupoid of the form $((D; G,* ; *) , \Pi,\Psi)$ is a {\em quasi Poisson Lie 2-group} in the sense of \cite{poi2gro}; if $\Psi=0$, we get a {\em Poisson 2-group}.
    \item A double quasi-Poisson groupoid of the form $((D; *, G;*) , \Pi,\Psi)$ can be called a {\em vertical quasi Poisson 2-group} in order to distinguish this notion from the one described in the previous example. Notice that a Poisson 2-group is also a vertical quasi Poisson 2-group with respect to its group structure.
\end{enumerate}   \end{exa} 
Many other examples of double quasi Poisson groupoids arise when considering moduli spaces of representations of fundamental groupoids, see \cite{dan:poi}.

\subsection{Quasi Poisson LA-groupoids and multiplicative Lie quasi-bialgebroids} Differentiating vertically a double quasi Poisson groupoid we obtain a quasi Poisson LA-groupoid as in $\S$ \ref{subsec:catliebia}. 

\begin{prop}\label{thm:qpla} 
The vertical LA-groupoid of a double quasi Poisson groupoid inherits a canonical quasi Poisson LA-groupoid structure.   
\end{prop}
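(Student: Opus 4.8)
The plan is to obtain the quasi Poisson LA-groupoid structure on the vertical LA-groupoid $(A^v_D\rightrightarrows A_H; G\rightrightarrows M)$ by applying the Lie functor in the vertical direction, i.e. by differentiating the groupoid $D\rightrightarrows G$, to each ingredient of the double quasi Poisson structure $(\Pi,\Psi)$ and to each defining equation. The target is a Maurer--Cartan element $\big((0,0),(0,Q),0,(\Pi_v,\pi_v),(0,0)\big)\in\cV^1_{A^v_D[1]}$ as in $\S\ref{subsec:catliebia}$, whose equations read $Q^2=0$, $[\Pi_v,Q]=0$, $\tfrac12[\Pi_v,\Pi_v]=\pi_v^l-\pi_v^r$, $[Q,\pi_v]=0$ and $[\Pi_v,\pi_v]=0$. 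First I would recall that vertical differentiation of the double Lie groupoid already yields the LA-groupoid $(A^v_D\rightrightarrows A_H; G\rightrightarrows M)$, as in \cite{macdou}; via the functor $[1]$ and Proposition \ref{prop:raj} this gives the $1$-graded Lie groupoid $A^v_D[1]\rightrightarrows A_H[1]$ with its multiplicative homological vector field $Q\in\fX^{1,1}_{mul}(A^v_D[1])$, $Q^2=0$, encoding the Lie algebroid $A^v_D\to G$. This already supplies $(0,Q)$.

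The two multiplicativities of $\Pi$ then play complementary roles. Differentiating in the vertical direction keeps the horizontal groupoid $D\rightrightarrows H$ as $A^v_D\rightrightarrows A_H$, so the $H$-multiplicativity of $\Pi$ passes to the $A^v_D\rightrightarrows A_H$-multiplicativity of the linear bivector $\Pi_v:=\Lie^v(\Pi)$, i.e. $\Pi_v\in\fX^{2,-1}_{mul}(A^v_D[1])$; whereas the $G$-multiplicativity of $\Pi$ says precisely that its vertical linearization is infinitesimally multiplicative for $A^v_D\to G$, which is the compatibility $[\Pi_v,Q]=0$. Likewise, the multiplicativity of $\Psi$ (the requirement that $\overline{\Psi}$ be a groupoid morphism) ensures that its vertical linearization is a well-defined linear section $\pi_v$ of $\wedge^3$ of the double Lie algebroid $A(A^v_D\rightrightarrows A_H)$ and delivers its $Q$-invariance $[Q,\pi_v]=0$.

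It remains to differentiate the equation $[\Pi,\Pi]=\Psi^{\mathcal L}-\Psi^{\mathcal R}$. The technical heart, which I would isolate as a lemma, is that the vertical Lie functor intertwines Schouten brackets and carries the horizontal invariant extensions $(\cdot)^{\mathcal L},(\cdot)^{\mathcal R}$ taken over $D\rightrightarrows H$ to the invariant extensions $(\cdot)^l,(\cdot)^r$ of $A^v_D\rightrightarrows A_H$. Granting this, the quasi Poisson equation differentiates to $\tfrac12[\Pi_v,\Pi_v]=\pi_v^l-\pi_v^r$, while the identity $[\Pi,\Psi^{\mathcal R}]=0$ --- which itself follows from the graded Jacobi identity $[\Pi,[\Pi,\Pi]]=0$ together with the multiplicativity of $\Psi$ --- differentiates to $[\Pi_v,\pi_v]=0$. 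Assembling the five identities exhibits $(Q,\Pi_v,\pi_v)$ as the desired Maurer--Cartan element, hence a quasi Poisson LA-groupoid structure.

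I expect the main obstacle to be the bookkeeping of the invariant extensions: one must verify that $\Psi^{\mathcal L},\Psi^{\mathcal R}$, defined over $D\rightrightarrows H$, linearize vertically to $\pi_v^l,\pi_v^r$ over $A^v_D\rightrightarrows A_H$, and that the bundle carrying $\pi_v$ is correctly identified with the double Lie algebroid using Mackenzie's symmetry $A(A^v_D\rightrightarrows A_H)\cong A(A^h_D\rightrightarrows A_G)$. As a cleaner alternative, in the spirit of the dualization we preferred for the Lie theory of quasi LA-groupoids, one could apply Theorem \ref{thm:main} and carry out the differentiation on the dual VB-groupoid, where only a single multiplicative bivector and trivector on $D$ need be tracked and the statement reduces to the established correspondence between (quasi) Poisson groupoids and (quasi) Lie bialgebroids.
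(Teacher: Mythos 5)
Your overall strategy---differentiate $\Pi$ and $\Psi$ in the vertical direction and verify the Maurer--Cartan equations for $(Q,\Pi_v,\pi_v)$---is essentially the proof in the paper: there $\Pi_v$ appears as the degree $1$ derivation $\delta(X)^R=[\Pi,X^R]$ of $\Gamma(\wedge^\bullet A^v_D)$ (your $\Lie^v(\Pi)$ under Proposition \ref{rem:linmul}), $\pi_v$ is $\Lie(\Psi)$, and the bookkeeping of invariant extensions and linearity that you isolate as the ``technical heart'' is exactly what is delegated to Theorem \ref{thm:main}; the ``cleaner alternative'' of your last paragraph is in fact the route the paper takes, so the two arguments differ only in organization.

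There is, however, one step you assert that does not hold as stated: that $[\Pi,\Psi^{\mathcal{R}}]=0$ ``follows from the graded Jacobi identity $[\Pi,[\Pi,\Pi]]=0$ together with the multiplicativity of $\Psi$.'' The Jacobi identity only yields $[\Pi,\Psi^{\mathcal{R}}]=[\Pi,\Psi^{\mathcal{L}}]$, and multiplicativity of $\Pi$ over $H$ only tells you that the left-hand side is horizontally right-invariant while the right-hand side is horizontally left-invariant. A multivector field that is simultaneously left- and right-invariant restricts on the units to a section of $\wedge^4$ of the isotropy of $D\rightrightarrows H$ and need not vanish, so this does not produce $[\Pi,\Psi^{\mathcal{R}}]=0$; note also that this identity is not among the axioms of a double quasi Poisson groupoid in \S\ref{sec:douqpoi}. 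What your argument actually requires is the infinitesimal identity $[\Pi_v,\pi_v^r]=0$ on $A^v_D\rightrightarrows A_H$, which is part of the assertion that the linearized pair is a quasi Poisson bivector in the sense of \eqref{eq:qP}; the paper states this property without routing it through a global vanishing of $[\Pi,\Psi^{\mathcal{R}}]$. You should either verify $[\Pi_v,\pi_v^r]=0$ directly at the infinitesimal level or supply an actual argument for the global identity you invoke.
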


\begin{proof} 
Let $((D; H ,G; M) , \Pi,\Psi)$ be a double quasi Poisson groupoid. Since $\Pi$ is multiplicative over $G$, the formula $$\delta(X)^R=[\Pi,X^R]\quad \text{for all}\quad X\in \Gamma (\wedge^\bullet A^v_D)$$ defines a degree 1 multiplicative derivation $\delta:\Gamma (\wedge^\bullet A^v_D) \rightarrow \Gamma (\wedge^{\bullet+1} A^v_D)$ of the Schouten bracket on $\Gamma (\wedge^\bullet A^v_D)$ determined by its LA-groupoid structure \cite{burcab}. 

Note that a quasi Poisson LA-groupoid can be equivalently described as a VB-groupoid $(A^v_D \rightrightarrows A_H; G \rightrightarrows M)$ endowed with an LA-groupoid structure $((A^v_D \rightrightarrows A_H; G \rightrightarrows M),\mathtt{a}_H,[\,,\,])$ and a quasi LA-groupoid structure $\big(((A^v_D)^* \rightrightarrows C^*; G \rightrightarrows M),\delta,\psi \big)$ such that $\delta$ is a derivation of $[\,,\,]$. So it only remains to show that $\delta$ can be extended to a quasi LA-groupoid structure.

Note that the linear bivector field determined by $\delta$ as in Proposition \ref{rem:linmul} is a quasi Poisson bivector with respect to the linear section in $\wedge^3 \text{Lie} (A^v_D)$ determined by $\text{Lie} (\Psi)$. Therefore, Theorem \ref{thm:main} implies the result.
\end{proof}
 
\begin{rema} 
Note that the horizontal LA-groupoid of a double quasi Poisson groupoid is automatically a multiplicative Lie quasi-bialgebroid. This means that quasi Poisson LA-groupoids and multiplicative Lie quasi-bialgebroids are in duality as LA-groupoids. 
\end{rema}

\subsection{Integration of quasi Poisson LA-groupoids over a unit groupoid} 
Theorem \ref{thm:qpla} allows to get a quasi Poisson LA-groupoid out of a double quasi Poisson groupoid; the inverse operation involves the integration problem for LA-groupoids which is still an open question, see \cite{morla}. For a certain class of quasi Poisson LA-groupoids however we can obtain an integration result which generalizes \cite[Thm. 3.2]{poi2gro}.

\begin{thm}\label{thm:qpla2} 
A quasi Poisson LA-groupoid over a unit groupoid which consists of integrable Lie algebroids is integrable by a double quasi Poisson groupoid. 
\end{thm}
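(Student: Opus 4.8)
The plan is to integrate in three stages: first reconstruct the double Lie groupoid underlying the desired structure, then integrate the bivector field, and finally verify that the integrated bivector and trivector are multiplicative in \emph{both} directions. Write the given quasi Poisson LA-groupoid over the unit groupoid $M\rightrightarrows M$ as $(K\rightrightarrows L; M\rightrightarrows M)$ together with its quasi Poisson data $(\Pi,\Psi)$. Because the base groupoid is the unit groupoid, both $K\to M$ (the Lie algebroid of the LA-groupoid) and $L\to M$ (the induced base algebroid) are genuine Lie algebroids, and the structure maps $\mathtt{s},\mathtt{t},\mathtt{m},\mathtt{u},\mathtt{i}$ of $K\rightrightarrows L$ are Lie algebroid morphisms; in other words $K\rightrightarrows L$ is a groupoid object in the category of Lie algebroids over $M$. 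The hypothesis that the structure ``consists of integrable Lie algebroids'' means precisely that $K\to M$ and $L\to M$ are integrable.

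First I would integrate the LA-groupoid. Let $D\rightrightarrows M$ and $H\rightrightarrows M$ be the source-simply-connected integrations of $K\to M$ and $L\to M$, which exist by the integrability hypothesis. Since the structure maps of $K\rightrightarrows L$ are algebroid morphisms and source-simply-connected integration is functorial (Lie's second theorem, as exploited for LA-groupoids in \cite{burcabhoy}), they integrate to Lie groupoid morphisms assembling into a horizontal groupoid $D\rightrightarrows H$. The double groupoid axioms (associativity, interchange, unit and inverse laws) follow because they hold at the infinitesimal level and morphisms out of source-simply-connected groupoids integrate uniquely; the double source map is a surjective submersion because its infinitesimal counterpart is. This produces a double Lie groupoid $(D;H,M;M)$ whose vertical LA-groupoid is, by construction, the given $(K\rightrightarrows L;M\rightrightarrows M)$.

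Next I would integrate the Poisson data. The multiplicative bivector $\Pi$ endows $K\to M$ with a degree $1$ derivation $\delta=[\Pi,-]$ of its Schouten bracket, and together with $\psi:=\mathrm{Lie}(\Psi)\in\Gamma(\wedge^3 K)$ this makes $(K,\delta,\psi)$ a Lie quasi-bialgebroid in the sense recalled in \S\ref{subsec:liethe} (see also \cite{quapoi}). Applying the integration theorem for Lie quasi-bialgebroids over the source-simply-connected groupoid $D\rightrightarrows M$ (the quasi generalization of the Mackenzie--Xu correspondence between Lie bialgebroids and Poisson groupoids, see \cite{mac:book}, compatible with the duality of Theorem \ref{thm:main}) produces a bivector field $\Pi_D\in\mathfrak{X}^2(D)$ that is multiplicative over $M$ and satisfies $[\Pi_D,\Pi_D]=\Psi^{\mathcal{L}}-\Psi^{\mathcal{R}}$, so that $(D\rightrightarrows M,\Pi_D,\Psi)$ is a quasi Poisson groupoid.

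The hard part is to show that $\Pi_D$ is \emph{simultaneously} multiplicative over the horizontal groupoid $D\rightrightarrows H$ and that $\Psi$ obeys the horizontal (contraction) multiplicativity condition required in the definition of a double quasi Poisson groupoid. The key idea is that horizontal multiplicativity is an identity between multiplicative tensors on $D\rightrightarrows M$ which, by the uniqueness part of the integration of multiplicative multivector fields on source-simply-connected groupoids, is forced as soon as it holds infinitesimally; and infinitesimally it holds exactly because $\Pi$ is a \emph{linear} quasi Poisson structure with respect to the LA-groupoid $K\rightrightarrows L$, this linearity being the content of the quasi PVB-groupoid side of Theorem \ref{thm:main}. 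Concretely, I would transport the problem through Theorem \ref{thm:main}, so that horizontal multiplicativity of $(\Pi_D,\Psi)$ becomes a statement about the induced structures on the dual of $K$, which again reduces to an infinitesimal identity guaranteed by the quasi Poisson LA-groupoid compatibility; the residual compatibilities for $\Psi$ follow from the identification of the horizontal LA-groupoid as a multiplicative Lie quasi-bialgebroid noted after Proposition \ref{thm:qpla}. Verifying that the two integration procedures, the vertical one producing $D\rightrightarrows M$ and the horizontal one producing $D\rightrightarrows H$, are mutually compatible, so that a single $\Pi_D$ is multiplicative for both, is the main technical obstacle; it is handled by this uniqueness-of-integration argument, which specializes \cite[Thm. 3.2]{poi2gro} (the case $M=\mathrm{pt}$) and extends it by the Lie functoriality used in the first stage (compare the open integration problems discussed in \cite{morla}).
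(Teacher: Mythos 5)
Your overall strategy --- first build the double Lie groupoid, then integrate the quasi-bialgebroid data in the vertical direction and use source-simple-connectedness in both directions to force the horizontal multiplicativity of $\Pi_D$ and $\Psi$ --- matches the paper's intent for the second and third stages, which indeed amount to applying the standard integration result of \cite{quapoi} to a double Lie groupoid that is source-simply-connected both horizontally and vertically. The genuine gap is in your first stage. You cannot integrate the groupoid object $K\rightrightarrows L$ in Lie algebroids to a groupoid object $D\rightrightarrows H$ in Lie groupoids by applying Lie's second theorem to the structure maps one at a time: the multiplication $\mathtt{m}:K\times_L K\to K$ would have to integrate to a map defined on $D\times_H D$, but $D\times_H D$ is in general \emph{not} the source-simply-connected integration of $K\times_L K$ (a fibered product of simply connected source fibers over a simply connected source fiber need not be simply connected), so Lie II only yields a multiplication on a cover of $D\times_H D$, and descending it is exactly the unresolved difficulty that makes the integration of LA-groupoids an open problem in general (see \cite{morla}). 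Your argument does not invoke the unit-groupoid hypothesis at any point where it would dissolve this obstruction, so the claim that "the double groupoid axioms follow because they hold at the infinitesimal level" does not stand as written.

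The paper's Proposition \ref{thm:qpla3} circumvents the issue by exploiting the special shape of an LA-groupoid over $M\rightrightarrows M$: the unit section splits the core sequence, identifying the Lie algebroid of arrows with a semidirect product $C\rtimes E$ of its subalgebroids $C$ (the core) and $E$; one integrates $E$ and $C$ separately to source-simply-connected groupoids $H$ and $K$, integrates the infinitesimal action of $E$ on $C$ to a groupoid action of $H$ on $K$ via \cite[Corollary 5.4]{intinfact}, and sets $D=K\rtimes H$, whose action-groupoid structure over $H$ is a double Lie groupoid for free (and is source-simply-connected in both directions). If you replace your first stage by this construction, the remainder of your argument --- integrating the quasi Lie bialgebroid in the vertical direction and using uniqueness of multiplicative integrations, together with Theorem \ref{thm:main} and the duality remarks after Proposition \ref{thm:qpla}, for the horizontal compatibilities --- is essentially the paper's proof.
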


This result follows from a more fundamental observation which is itself an easy consequence of \cite[Corollary 5.4]{intinfact}.

\begin{prop}\label{thm:qpla3}
    An LA-groupoid over a groupoid consisting only of units is integrable by a double Lie groupoid if its Lie algebroid of arrows is integrable.
\end{prop}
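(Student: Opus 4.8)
The plan is to recognize the given LA-groupoid as a groupoid object internal to the category of Lie algebroids and then to integrate it one leg at a time by means of the functoriality of the source-simply-connected (ssc) integration; integrability of the algebroid of arrows is exactly what is needed to start this process, and the remaining integrability will turn out to be automatic.

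Write the LA-groupoid as $(H\rightrightarrows E; M\rightrightarrows M)$, so that the side is the unit groupoid of $M$ and the Lie algebroid of arrows is $H\to M$. The first step I would carry out is to observe that, because the base groupoid consists only of units, all the structure maps $\gs,\gt\colon H\to E$, $\gu\colon E\to H$, $\gi\colon H\to H$ and $\gm\colon H\times_E H\to H$ cover $\id_M$, and the LA-groupoid axioms translate precisely into the statement that $H\to M$ and $E\to M$ are Lie algebroids and that $\gs,\gt,\gu,\gi,\gm$ are Lie algebroid morphisms; here $E\to M$ carries the algebroid structure of the unit subalgebroid $\gu(E)\subset H$, and the condition that $\gr(\gm)$ be a subalgebroid is exactly the multiplicativity of $\gm$ as an algebroid morphism. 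Thus $(H\rightrightarrows E; M\rightrightarrows M)$ is nothing but a groupoid object in Lie algebroids over $M$, which is the setting of \cite[Corollary 5.4]{intinfact}.

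By hypothesis $H\to M$ is integrable. I would next note that $E\to M$ is then integrable as well, since $\gu\colon E\to H$ and $\gs\colon H\to E$ are algebroid morphisms with $\gs\circ\gu=\id_E$, exhibiting $E$ as a retract of $H$; at the level of monodromy groups $\gu_*$ is injective and realizes $\mathcal{N}_x(E)$ as a subgroup of the uniformly discrete $\mathcal{N}_x(H)$, so the Crainic--Fernandes criterion gives integrability of $E$. With both algebroids integrable, I would take their ssc integrations $D\rightrightarrows M$ and $K\rightrightarrows M$. By Lie's second theorem each of $\gs,\gt,\gu,\gi,\gm$ integrates to a unique morphism of Lie groupoids between these ssc groupoids, and uniqueness forces the groupoid identities---associativity, the unit and the inverse laws---to hold for the integrated maps, because the two sides of each identity integrate one and the same algebroid morphism. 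This produces the double Lie groupoid
\[
\xymatrix{ D\ar@<-.5ex>[r] \ar@<.5ex>[r]\ar@<-.5ex>[d] \ar@<.5ex>[d]& K \ar@<-.5ex>[d] \ar@<.5ex>[d] \\  M \ar@<-.5ex>[r] \ar@<.5ex>[r] & M, }
\]
and differentiating it vertically returns $A^v_D=H$, $A_K=E$ together with the original structure maps, so that its vertical LA-groupoid is the one we started from.

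The step I expect to be the main obstacle is ensuring that the integrated horizontal structure genuinely makes $D\rightrightarrows K$ into a \emph{Lie} groupoid and $(D;K,M;M)$ into a \emph{double} Lie groupoid: one must check that the integration $D\to K$ of the submersive algebroid morphism $\gs\colon H\to E$ is again a surjective submersion, so that the double source map is a submersion. This is precisely where the smoothness and fibration properties packaged in \cite[Corollary 5.4]{intinfact} are used; the essential point is that the non-uniqueness inherent in integrating Lie algebroids is eliminated by passing to ssc covers, and integrability of the single algebroid of arrows is enough to guarantee that all the pieces assemble smoothly.
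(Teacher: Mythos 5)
There is a genuine gap in your argument, and it sits exactly at the point where the general integration problem for LA-groupoids becomes hard. Lie's second theorem integrates an algebroid morphism $\phi:A\to B$ to a groupoid morphism only when the \emph{source} of $\phi$ is integrated by a source-simply-connected groupoid. For $\gs,\gt,\gu,\gi$ this is fine, but for the multiplication $\gm:A\times_E A\to A$ the natural domain of the integrated map is $D\times_K D$, and a fibre product of ssc groupoids over a third groupoid is in general \emph{not} source-simply-connected (its source fibres are fibre products of simply connected manifolds over another manifold, which can acquire fundamental group). So Lie II neither produces the integrated multiplication on $D\times_K D$ nor gives you the uniqueness you invoke to deduce associativity and the unit/inverse laws (whose domains are even higher fibre products). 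This is precisely why the integration of LA-groupoids to double Lie groupoids is an open problem in general, as the paper itself stresses; your scheme, if it worked, would not use the unit-groupoid hypothesis in any essential way. Your closing appeal to \cite[Corollary 5.4]{intinfact} does not repair this: that result integrates an \emph{infinitesimal action} of one Lie algebroid on another to a groupoid action, and you never exhibit such an action, so the citation carries no weight where you place it (and the difficulty you flag --- submersivity of the integrated source --- is downstream of the real problem, which is obtaining the multiplication at all).

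The paper's proof avoids the fibre-product issue entirely by exploiting the unit-groupoid hypothesis structurally: the VB-groupoid $A\rightrightarrows E$ over $M\rightrightarrows M$ splits via its core $C$, and the LA-groupoid structure is equivalent to a semidirect-product decomposition $A\cong C\rtimes E$ of Lie algebroids, i.e.\ an action of $E$ on $C$. One then integrates $E$ and $C$ (both integrable, being subalgebroids of the integrable $A$) to ssc groupoids $H$ and $K$, integrates the action by \cite[Corollary 5.4]{intinfact}, and takes $D=K\rtimes H$; the semidirect-product groupoid integrates $A$ over $M$, while the action-groupoid structure $D\rightrightarrows H$ supplies the second groupoid direction, assembling the double Lie groupoid directly without ever integrating $\gm$ by Lie II. If you want to salvage your approach, you would need to first establish this splitting and the action, at which point you are reproducing the paper's argument.
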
    

\begin{proof} 
Let $(A \rightrightarrows E; M \rightrightarrows M) $ be an LA-groupoid and suppose that $A$ is integrable, then so are $E$ and the core $C$ since they are Lie subalgebroids of $A$. We have that $A$ is isomorphic to the semidirect product of Lie algebroids $C \rtimes E$. Let $H$ be the source-simply-connected integration of $E$ and let $K $ be the source-simply-connected integration of $C$. Then the action of $E$ on $C$ integrates to a Lie groupoid action of $H$ on $K$ along the identity on $M$ and the associated semidirect product groupoid $D :=K \rtimes H \rightrightarrows M$ integrates $A$ \cite[Corollary 5.4]{intinfact}. So we have that the action groupoid structure $D\rightrightarrows H $ constitutes a double Lie groupoid $(D ;H, M; M)$ integrating $(A \rightrightarrows E;M \rightrightarrows M)$. 
\end{proof} 

\begin{proof}[Proof of Theorem \ref{thm:qpla2}] This follows from observing that the double Lie groupoid produced by Proposition \ref{thm:qpla3} is both horizontally and vertically source-simply-connected so one can apply to it the standard integration result of \cite{quapoi}. 
\end{proof}

\section{Infinitesimal derivations and linear sections}
Here we include the proof of a technical result that we need to give a classical proof of Theorem \ref{thm:main}.

\begin{prop}\label{pro:infder} Let $(H\rightrightarrows E;G\rightrightarrows M)$ be a VB-groupoid with core $C$. Denote by $H[1]\rightrightarrows E[1]$ the associated $1$-graded groupoid  and pick  $\qq\in \Gamma^2\cA_{H[1]}=\Gamma^2(A_{H}[1])$. Then, there is a linear section $\pi$ of $\wedge^3 A_{H^*}\to C^*$ such that 
\begin{align}  
\qq^r(\omega)^\uparrow=[\pi^r,\omega^\uparrow]\qquad\forall\omega\in\Gamma\wedge^\bullet H^*. \label{eq:infder} 
\end{align}  
Conversely, a linear section $\pi$ of $\wedge^3 A_{H^*}\rightarrow  C^*$ induces a right-invariant derivation on $H[1]\rightrightarrows E[1]$ by means of formula \eqref{eq:infder}. 
\end{prop}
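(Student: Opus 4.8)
The plan is to deduce the statement from the linear-algebraic correspondence of Proposition \ref{rem:linmul} applied to the vector bundle $H\to G$, together with a compatibility of that correspondence with the dual groupoid structures. First I would record that $\qq\in\Gamma^2\cA_{H[1]}=\Gamma^2(A_H[1])$ produces, as in \eqref{eq:R}, a right-invariant degree $2$ vector field $\qq^r\in\fX^{1,2}(H[1])$, i.e. a degree $2$ derivation of $\Gamma\wedge^\bullet H^*=C^\bullet(H[1])$. Proposition \ref{rem:linmul}, applied to $H\to G$ (so that $A^*=H^*$ and $A[1]=H[1]$), gives a bijection between degree $2$ vector fields on $H[1]$ and \emph{linear} $3$-vector fields on $H^*$: there is a unique linear $\beta\in\fX^3(H^*)$ with $\sigma_\beta=\qq^r$, and by part (3) of that proposition it satisfies $\qq^r(\omega)^\uparrow=[\beta,\omega^\uparrow]$ for all $\omega\in\Gamma\wedge^\bullet H^*$. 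Thus \eqref{eq:infder} holds as soon as one shows that $\beta$ is right-invariant for the dual VB-groupoid $H^*\rightrightarrows C^*$, for then $\beta=\pi^r$ with $\pi\in\Gamma\wedge^3 A_{H^*}$, and the linearity of $\beta$ over $H^*\to G$ translates precisely into $\pi$ being a linear section of $\wedge^3 A_{H^*}\to C^*$.

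The key step --- and the main obstacle --- is therefore to prove that $\sigma$ carries right-invariant vector fields on $H[1]\rightrightarrows E[1]$ to right-invariant multivector fields on $H^*\rightrightarrows C^*$, and conversely. I would establish this by checking that $\sigma$ intertwines the two groupoid structures, which are linearly dual to each other in the sense of \cite[\S 11.2]{mac:book}; this is the multivector analogue of the multiplicative statement already contained in Proposition \ref{prop:raj}(2) and the duality \eqref{1dual}. Concretely, right-invariance is the conjunction of $\gs$-verticality and invariance under right translations; since $\sigma$ is built only from the linear structure of $H\to G$, it intertwines the source map $T[2]\gs_{H[1]}$ with the appropriate structure map of $H^*$ (so that $T[2]\gs_{H[1]}\circ\qq^r=0$ becomes tangency of $\beta$ to the corresponding fibers), while the duality between $\gm_{H[1]}$ and the dual multiplication $\gm_{H^*}$ makes the right-translation invariance of $\qq^r$ equivalent to that of $\beta$. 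Since $\qq^r$ is right-invariant by construction, $\beta$ is right-invariant, i.e. $\beta=\pi^r$ as wanted.

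For the converse, I would simply run the argument backwards: a linear section $\pi\in\Gamma\wedge^3 A_{H^*}$ has a right-invariant extension $\pi^r$, which is a linear $3$-vector field on $H^*$; Proposition \ref{rem:linmul} produces $\sigma_{\pi^r}\in\fX^{1,2}(H[1])$ satisfying $\sigma_{\pi^r}(\omega)^\uparrow=[\pi^r,\omega^\uparrow]$, and the right-invariance compatibility of the previous paragraph shows that $\sigma_{\pi^r}$ is right-invariant, hence equal to $\qq^r$ for a unique $\qq\in\Gamma^2(A_H[1])$. This yields \eqref{eq:infder}. I expect the only delicate point to be the bookkeeping in the second paragraph: matching the source and target of $H[1]$ with those of the dual groupoid $H^*\rightrightarrows C^*$ under linear duality, and verifying that the vertical-lift correspondence of Proposition \ref{rem:linmul} is genuinely natural with respect to right translations. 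The bracket-compatibility $\sigma_{[\beta_1,\beta_2]}=[\sigma_{\beta_1},\sigma_{\beta_2}]$ from Proposition \ref{rem:linmul} should make all of these checks formal once the structure maps are correctly aligned.
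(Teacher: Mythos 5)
Your setup coincides exactly with the paper's: apply Proposition \ref{rem:linmul} to the bundle $H\to G$ to convert $\qq^r\in\fX^{1,2}(H[1])$ into a linear $3$-vector field $\overline{\pi}\in\fX^3(H^*)$, and then reduce everything to showing that $\overline{\pi}$ is right-invariant for the dual groupoid $H^*\rightrightarrows C^*$. The problem is that this last reduction --- which you correctly identify as ``the key step and the main obstacle'' --- is precisely where the entire content of the paper's proof lies, and you dispose of it by asserting that the vertical-lift correspondence ``intertwines the two groupoid structures'' and that the remaining checks are ``formal bookkeeping.'' They are not. The structure maps of $H^*\rightrightarrows C^*$ are not plain linear duals of those of $H\rightrightarrows E$: the conormal bundle $(\ker T\mathtt{s}_{H^*})^\circ$ is spanned by the differentials of $p^*\mathtt{s}_G^*f$ and of the linear functions $\ell_{c^l}$ for core sections $c$ (since $\mathtt{s}_{H^*}^*\ell_c=\ell_{c^l}$). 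Killing the first family does follow from $\qq^r(\mathtt{s}_G^*f)=0$, but killing $d\ell_{c^l}$ is \emph{not} a consequence of $T[2]\gs_{H[1]}\circ\qq^r=0$ alone: the paper derives $\overline{\pi}(d\ell_{c^l},\cdot,\cdot)=0$ from the commutation $[\qq^r,i_{c^l}]=0$ of right-invariant with left-invariant derivations, combined with a case analysis via splittings of the two core exact sequences of $H$. So source-verticality on the dual side genuinely mixes verticality and translation-invariance data from the original side; your sentence claiming the former ``becomes'' the latter is where the argument breaks.

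Likewise, the statement that duality of the multiplications ``makes the right-translation invariance of $\qq^r$ equivalent to that of $\overline{\pi}$'' is the claim to be proved, not a proof: the paper's Step 2 is an explicit computation with $\mathtt{m}_{H^*}^*\ell_{\mathtt{m}_H(h,k)}=\ell_{(h,k)}$, unpacking the derivation identity separately on basic functions and on linear functions and matching the resulting core terms $\Phi(\cdot,\cdot,\cdot)$ across the multiplication. The bracket compatibility $\sigma_{[\beta_1,\beta_2]}=[\sigma_{\beta_1},\sigma_{\beta_2}]$ that you invoke at the end does not help here, since right-invariance is not expressed as a bracket relation on $H^*$. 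Finally, the passage from linearity of $\overline{\pi}$ over $H^*\to G$ to linearity of $\pi$ as a section of $\wedge^3 A_{H^*}\to C^*$ also needs an argument (the paper restricts the contraction map along $\bigoplus^3 A_{H^*}^*\hookrightarrow\bigoplus^3 T^*H^*$), though this part is minor. In short: right strategy, but the proposal omits the actual proof of its central claim and mischaracterizes it as automatic.
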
  

\begin{proof} 
Let $\qq\in \Gamma^2\cA_{H[1]}$ and consider its right invariant extension $\qq^r\in\fX^{1,2}(H[1])$.  Proposition \ref{rem:linmul} tells us that there is a corresponding linear $\overline{\pi}\in\fX^3(H^*)$. We have to prove that $\overline{\pi}$ factorizes as in the following diagram
\begin{align}  
\begin{array}{c}
      \xymatrix{\bigoplus^3 T^*H^* \ar[r]^{\quad\overline{\pi}}\ar[d]_{\bigoplus^3 \gt_{T^*H^*}} & \mathbb{R} \\ \bigoplus^3 A_{H^*}^* \ar[ur]_\pi } \label{eq:rinv0}
\end{array}
 \end{align}
for some linear section $\pi$ in $\wedge^3 A_{H^*}\to C^*$. 

{\em Step 1: $\overline{\pi}$ is tangent to the $\mathtt{s}_{H^*}$-fibers}. We shall show that 
$$
\overline{\pi}(\alpha,\cdot,\cdot)=0\quad  \text{for all}\quad\alpha \in \ker \mathtt{t}_{T^*H^*} =(\ker T \mathtt{s}_{H^*})^\circ.
$$ 
Let $p:H^* \rightarrow G$ be the vector bundle projection.  By \eqref{eq:R} we get that $\qq^r (\mathtt{s}_G^*(f))=0$ for any $f\in C^\infty(M)$. Thus using the definition of $\overline{\pi}$ we have
\begin{equation}\label{eq:aux1}
    \overline{\pi}(d p^*\mathtt{s}_G^*f,\cdot,\cdot)=[\overline{\pi},(\mathtt{s}_G^*f)^\uparrow](\cdot,\cdot)=\qq^r (\mathtt{s}_G^*f)^\uparrow(\cdot,\cdot)=0.
\end{equation}
Take $\theta \in \Gamma H^*$, $c\in \Gamma C$ and $h_i\in \Gamma H$ for $i=1,2$ and denote their linear functions by $\ell_{h_i}$. Linearity of $\overline{\pi}$ implies  that $\overline{\pi}(d\ell_{c^l},d\ell_{h_1},d\ell_{h_2})=\ell_{\Phi(c^l,h_1,h_2)}$ for some $\Phi(c^l,h_1,h_2)\in  \Gamma H$. Thus, we have that
\begin{align} 
&\qq^r( \theta )^\uparrow(c^l,h_1,h_2)= [\overline{\pi}, \theta^\uparrow](d\ell_{c^l},d\ell_{h_1},d\ell_{h_2})  =\langle  \theta ,\Phi(c^l,h_1,h_2) \rangle+ \label{eq:infder2} \\
&+\overline{\pi}(d p^* \langle  \theta ,c^l \rangle ,d\ell_{h_1},d\ell_{h_2})
 +\overline{\pi}(d p^* \langle \theta ,h_1 \rangle ,d\ell_{h_2},d\ell_{c^l})
+ \overline{\pi}(d p^* \langle \theta ,h_2 \rangle ,d\ell_{c^l},d\ell_{h_1}). \notag 
\end{align} 
Recall that $s_{H^*}^* \ell_c=\ell_{c^l}$ and that $H$ fits into the exact sequences
\begin{align} \xymatrix{0 \ar[r] & \mathtt{t}_G^*C \ar[r] &H \ar[r]^{(p,\mathtt{s}_H)}  & \mathtt{s}_G^*E \ar[r] &0}, \label{eq:rcore} \\
\xymatrix{0 \ar[r] & \mathtt{s}_G^*C \ar[r] &H \ar[r]^{(p,\mathtt{t}_H)}  & \mathtt{t}_G^*E \ar[r] &0}, \label{eq:lcore}
\end{align}  
 see  e.g.\cite{macdou2}.  Then we have two possibilities:
 \begin{enumerate}
     \item $\theta=\mathtt{s}_G^* \alpha$ for $\alpha \in \Gamma E^*$, using the dual of \eqref{eq:rcore}.  In this case  $\langle  \theta ,c^l \rangle=-\mathtt{s}_G^* \langle \alpha ,\mathtt{t}_H(c) \rangle $ and thus the term $\overline{\pi}(d p^* \langle \theta ,c^l \rangle ,d\ell_{h_1},d\ell_{h_2})$ vanishes for all $h_i$ by \eqref{eq:aux1}. Let us choose a splitting of the first sequence $\sigma: \mathtt{s}_G^*E \rightarrow H$. So we can assume that $h_i$ lies either in $\ker \mathtt{s}_H$ or in the image of $\sigma$; in the former case $\langle \mathtt{s}_H^* \alpha ,h_2 \rangle=0$ and in the latter case we have that $\langle \mathtt{s}_H^* \alpha ,h_2 \rangle$ is the pullback by $\mathtt{s}_G$ of a function on $M$ so the last two terms of \eqref{eq:infder2} vanish again by \eqref{eq:aux1}. As a consequence, plugging the identity $T[2]\gs_{H^*}\circ \qq^r=0$ in \eqref{eq:infder2} we get that $\Phi(c^l,h_1,h_2)$ lies in $\ker \mathtt{s}_H$. Hence $\overline{\pi}$ is tangent to the $\mathtt{s}_{H^*}$-fibers.
     \item $\theta$ lies in $\sigma'(\mathtt{t}_G^*E)^\circ$, for  $\sigma':\mathtt{t}_G^*E \rightarrow H$ a splitting of \eqref{eq:lcore}, and thus it is $\mathtt{s}_{H^*}$-related to $\theta_0\in \Gamma C^*$. In this case $\overline{\pi}(d p^* \langle  \theta ,c^l \rangle ,d\ell_{h_1},d\ell_{h_2})$ vanish again by \eqref{eq:aux1}. If $h_i$ is of the form $v^l$ for some $v\in \Gamma C$, then $\langle \theta, v^l \rangle =\mathtt{s}_G^* \langle \theta_0,v \rangle $ and so the terms $\overline{\pi}(d p^*\langle \theta, h_i \rangle , \cdot,\cdot) $ vanish. If $h_i$ lies in $\sigma'(\mathtt{t}_G^*E)$ then $\langle \theta, h_i \rangle =0$; in either case we get then that all the terms in the last row of \eqref{eq:infder2} vanish. 

     Now let us view the contraction $i_{c^l}:\Gamma (\wedge^\bullet H^* )\rightarrow \Gamma (\wedge^{\bullet-1} H^* )$ as a graded vector field on $H[1]$. We have that $[\qq^r,i_{c^l}]=0$ since right-invariant vector fields commute with left-invariant vector fields. So we get that 
\[ \qq^r(\theta)({c^l},\cdot,\cdot)=\qq^r \langle \theta, c^l \rangle(\cdot,\cdot) =(\qq^r \circ\mathtt{s}_G^*) \langle \theta_0,c \rangle (\cdot,\cdot)=0 \]
     and hence $\langle  \theta ,\Phi(c^l,h_1,h_2) \rangle$ vanishes as well. Since we can choose $\sigma'(\mathtt{t}_G^*E)$ to be complementary to $\ker \mathtt{s}_H$, it follows that $\Phi(c^l,h_1,h_2)=0$ and so we get $\overline{\pi}(d\ell_{c^l},d\ell_{h_1},d\ell_{h_2})=0$. 
 \end{enumerate}
 Finally, take $f\in C^\infty(G)$ arbitrary. Since $[\qq^r,i_{c^l}]=0$ we get that 
\[ \overline{\pi}(dp^*f,d\ell_{c^l},d\ell_{h})=(i_{c^l}\circ \qq^r)(f)(h)=(\qq^r\circ i_{c^l})(f)(h)=0 \] for all $h\in \Gamma H$. Therefore, $\overline{\pi}(d\ell_{c^l},\cdot,\cdot)=0$. 

{\em Step 2: $\overline{\pi}$ is right-invariant.} Since $\overline{\pi}$ is linear it is enough to check it on three linear sections and on one basic and two linear. The right invariance of $\qq^r$ action on functions of $H[1]$ is equivalent to
\begin{align} 
(\qq^r,0)( \mathtt{m}_{H[1]}^*\omega)=\mathtt{m}_{H[1]}^*  \qq^r(\omega)\qquad \omega\in C^\bullet(H[1])=\Gamma\wedge^\bullet H^*.\notag
\end{align}    
 Take $f\in C^\infty(G)$ and take $(h_i, k_i)\in \Gamma H^{(2)}$ for $i=1,2$. By definition of $\overline{\pi}$ we have that 
\begin{align} 
\mathtt{m}_{H^*}^* \overline{\pi}(dp^*f,d\ell_{\mathtt{m}_H(h_1,k_1)}, d\ell_{\mathtt{m}_H(h_2,k_2)})=&(\mathtt{m}_{H[1]}^*\qq^r(f))((h_1,k_1),(h_2,k_2)) \label{eq:aux2} \\ =&\big((\qq^r,0) \mathtt{m}_{H[1]}^*f\big)((h_1,k_1),(h_2,k_2))\notag\\
=&(\overline{\pi},0)(d \mathtt{m}_G^*f,d\ell_{(h_1,k_1)},d\ell_{(h_2,k_2)})\notag\\
=& (\overline{\pi},0)( \mathtt{m}_{H^*}^*(d p^*f) ,\mathtt{m}_{H^*}^*d\ell_{\mathtt{m}_H (h_1,k_1)},\mathtt{m}_{H^*}^*d\ell_{\mathtt{m}_H (h_2,k_2)})\notag
\end{align}
because $\mathtt{m}_{H^*}^*\ell_{\mathtt{m}_H(X_1,Y_1)}=\ell_{(X_1,Y_1)}.$

Take $\theta\in \Gamma H^*$ and $(h_i, k_i)\in \Gamma H^{(2)}$ for $i=1,2,3$. We have that
\begin{align} 
\mathtt{m}^*_{H^*} [\overline{\pi}, \theta^\uparrow](d\ell_{\mathtt{m}_H (h_1,k_1)},d\ell_{\mathtt{m}_H (h_2,k_2)},d\ell_{\mathtt{m}_H (h_3,k_3)})=&\mathtt{m}_{H^*}^* p^* Q 
\end{align}
where $Q\in C^\infty(G)$ is given by the formula
\begin{align*}
     Q=&\langle  \theta ,\Phi(\mathtt{m}_H (h_1,k_1),\mathtt{m}_H (h_2,k_2),\mathtt{m}_H (h_3,k_3)) \rangle
+\overline{\pi}(d p^* \langle  \theta ,\mathtt{m}_H (h_1,k_1) \rangle ,d\ell_{\mathtt{m}_H (h_2,k_2)},d\ell_{\mathtt{m}_H (h_3,k_3)})\\
&+ \overline{\pi}(d p^* \langle  \theta ,\mathtt{m}_H (h_3,k_3) \rangle ,dl_{\mathtt{m}_H (h_1,k_1)},d\ell_{\mathtt{m}_H (h_2,k_2)})
+ \overline{\pi}(d p^* \langle  \theta ,\mathtt{m}_H (h_2,k_2) \rangle ,d\ell_{\mathtt{m}_H (h_3,k_3)},d\ell_{\mathtt{m}_H (h_1,k_1)}) 
\end{align*}
with $\overline{\pi}(d\ell_{\mathtt{m}_H (h_1,k_1)},d\ell_{\mathtt{m}_H (h_2,k_2)},d\ell_{\mathtt{m}_H (h_3,k_3)})=\ell_{\Phi(\mathtt{m}_H (h_1,k_1),\mathtt{m}_H (h_2,k_2),\mathtt{m}_H (h_3,k_3)) }$. Thus, the multiplicativity of $a^r$ and the definition of $\overline{\pi}$ imply together that 
\begin{align}
    \mathtt{m}_{H^*}^* p^* Q =&\mathtt{m}^*_{H^*} [\overline{\pi}, \theta^\uparrow](d\ell_{\mathtt{m}_H (h_1,k_1)},d\ell_{\mathtt{m}_H (h_2,k_2)},d\ell_{\mathtt{m}_H (h_3,k_3)})\label{eq:aux3}\\
    =&(\mathtt{m}_{H[1]}^*\qq^r( \theta ))((h_1,k_1),(h_2,k_2),(h_3,k_3))\notag\\
    =&\big((\qq^r,0)\mathtt{m}_{H[1]}^* \theta \big)((h_1,k_1),(h_2,k_2),(h_3,k_3))\notag\\
    =& [(\overline{\pi},0), (\mathtt{m}_{H^*}^*\theta)^\uparrow]((d\ell_{h_1},d\ell_{k_1}),(dl_{ h_2},d\ell_{k_2}),(dl_{ h_3},d\ell_{k_3}))\notag \\
    =&(p,p)^*\langle \mathtt{m}_{H}^* \theta ,(\Phi,0)( (h_1,k_1), (h_2,k_2), (h_3,k_3))\rangle\notag\\
    &+(\overline{\pi},0)(d p^* \langle \mathtt{m}_{H}^* \theta , (h_1,k_1) \rangle ,dl_{ (h_2,k_2)},dl_{(h_3,k_3)}) \notag \\
& +(\overline{\pi},0)(d p^* \langle \mathtt{m}_{H}^* \theta , (h_3,k_3) \rangle ,dl_{ (h_1,k_1)},dl_{ (h_2,k_2)})\notag\\
& +(\overline{\pi},0)(d p^* \langle  \mathtt{m}_{H}^*\theta , (h_2,k_2) \rangle ,dl_{ (h_3,k_3)},dl_{ (h_1,k_1)}).\notag
\end{align}
So from the specific form of $Q$ and equation \eqref{eq:aux2} it follows that
\begin{align*} \mathtt{m}_G^* \langle  \theta ,\Phi(\mathtt{m}_H (h_1,k_1),\mathtt{m}_H (h_2,k_2),\mathtt{m}_H (h_3,k_3)) \rangle=\langle \mathtt{m}_{H}^* \theta ,(\Phi,0)( (h_1,k_1), (h_2,k_2), (h_3,k_3)) \rangle, \end{align*} 
hence we get 
\begin{align*}
    &\mathtt{m}_{H^*}^* \overline{\pi}(d\ell_{\mathtt{m}_H(h_1,k_1)},d\ell_{\mathtt{m}_H(h_2,k_2)},d\ell_{\mathtt{m}_H(h_3,k_3)})(  \mathtt{m}_{H}^*\theta)=\mathtt{m}_{H^*}^*\ell_{\Phi(\mathtt{m}_H (h_1,k_1),\mathtt{m}_H (h_2,k_2),\mathtt{m}_H (h_3,k_3))}(\mathtt{m}_{H}^* \theta)\\
    &\quad =\mathtt{m}_G^* \langle  \theta ,\Phi(\mathtt{m}_H (h_1,k_1),\mathtt{m}_H (h_2,k_2),\mathtt{m}_H (h_3,k_3)) \rangle\\
    &\quad =\langle \mathtt{m}_{H}^* \theta ,(\Phi,0)( (h_1,k_1), (h_2,k_2), (h_3,k_3)) \rangle\\
    &\quad =(\overline{\pi},0)(d\mathtt{m}_{H^*}^*\ell_{(h_1,k_1)},d\mathtt{m}_{H^*}^*\ell_{(h_2,k_2)},d\mathtt{m}_{H^*}^*\ell_{(h_3,k_3)})(  \mathtt{m}_{H}^*\theta)
\end{align*}
proving that $\overline{\pi}$ is right invariant. So it is of the form $\overline{\pi}=\pi^r$ for some $\pi\in \Gamma (\wedge^3 A_{H^*})$. 

{\em Step 3: $\pi$ is linear.} Let us consider the restriction map 
\[ r:\left(\bigoplus^3 T^*H^* \rightrightarrows \bigoplus^3 A_{H^*}^*;G \rightrightarrows M \right) \rightarrow \left(\bigoplus^3 H \rightrightarrows \bigoplus^3 E;G \rightrightarrows M \right) \]
defined by $\alpha \mapsto \alpha|_{\ker Tp}\in p^*H$ for all $\alpha \in T^*H^*$. We have that $r$ is the projection associated to a vector bundle structure and the fact that $\overline{\pi}$ is linear means that the contraction $\overline{\pi}:\bigoplus^3 T^*H^* \rightarrow \mathbb{R}$ is a vector bundle map with respect to this structure \cite[\S 6]{burcab} but then so is its restriction to the vector bundle $ \bigoplus^3 A_{H^*}^* \rightarrow \bigoplus^3 E$ and this means that $\pi$ is linear.

{\em The derivation $\qq^r$ defined by \eqref{eq:infder} is right-invariant.} Suppose that we have a linear section $\pi\in \Gamma (\wedge^3 A_{H^*}\to C^*)$. Then the 3-vector field $\overline{\pi}=\pi^r$ defined by diagram \eqref{eq:rinv0} is also linear. Since 
\[ \overline{\pi}(d p^*\mathtt{s}_G^*f,\cdot,\cdot)=[\pi^r,p^*f](\cdot,\cdot), \]
we have that $\qq^r (\mathtt{s}_G^*f)=0$. On the other hand, the equation $\overline{\pi}(d\ell_{c^l},\cdot,\cdot)=0$ applied to \eqref{eq:infder2} implies that $\qq^r\circ \mathtt{s}_H^* \alpha  =0 $ for all $\alpha \in \Gamma E^*$. As a consequence, $T[2]\gs_{H[1]}\circ\qq^r=0$. Since $\pi^r$ is right-invariant, we just need to use equations \eqref{eq:aux2} and \eqref{eq:aux3} starting from the equalities in the middle to their endpoints, as this follows the same approach that we used before, the details are left to the reader.
\end{proof}

\bibliographystyle{plain}
\bibliography{reff}
\end{document}